\newcommand\cO{\mathcal O}
\newcommand\cL{\mathcal L}
\newcommand\cD{\mathcal D}
\newcommand\cC{\mathcal C}
\newcommand\cZ{\mathcal Z}
\newcommand\cX{\mathcal X}
\newcommand\cY{\mathcal Y}
\newcommand\cU{\mathcal U}
\newcommand\cT{\mathcal T}
\newcommand\cS{\mathcal S}
\newcommand\bA{\mathbb A}
\newcommand\bP{\mathbb P}
\newcommand\bZ{\mathbb Z}
\newcommand\bQ{\mathbb Q}
\newcommand\bC{\mathbb C}
\newcommand\bG{\mathbb G}
\newcommand\bF{\mathbb F}
\newcommand\bN{\mathbb N}
\newcommand\bK{\mathbb K}
\newcommand\bS{\mathbb S}
\newcommand\bR{\mathbb R}
\newcommand\fS{\mathfrak S}
\newcommand\fp{\mathfrak p}
\newcommand\fq{\mathfrak q}
\DeclareMathOperator{\Spec}{Spec}
\DeclareMathOperator{\Proj}{Proj}
\DeclareMathOperator{\charac}{char}
\DeclareMathOperator{\Pic}{Pic}
\DeclareMathOperator{\lcm}{lcm}
\DeclareMathOperator{\Gal}{Gal}
\DeclareMathOperator{\Frac}{Frac}
\DeclareMathOperator{\Ker}{Ker}
\DeclareMathOperator{\Coker}{Coker}
\DeclareMathOperator{\Image}{Im}
\DeclareMathOperator{\NS}{NS}
\DeclareMathOperator{\pr}{pr}
\DeclareMathOperator{\Gr}{Gr}
\DeclareMathOperator{\End}{End}
\DeclareMathOperator{\rank}{rank}
\DeclareMathOperator{\Res}{Res}
\DeclareMathOperator{\SO}{SO}
\DeclareMathOperator{\U}{U}
\DeclareMathOperator{\Sh}{Sh}
\DeclareMathOperator{\disc}{disc}
\DeclareMathOperator{\GL}{GL}
\DeclareMathOperator{\GSpin}{GSpin}
\DeclareMathOperator{\GSp}{GSp}
\DeclareMathOperator{\Hdg}{Hdg}
\DeclareMathOperator{\height}{ht}
\DeclareMathOperator{\Hom}{Hom}
\DeclareMathOperator{\sHom}{\mathcal{H}\mathit{om}}
\DeclareMathOperator{\norm}{Nm}
\DeclareMathOperator{\trace}{tr}
\newcommand\et{\mathrm{\acute et}}
\newcommand\crys{\mathrm{crys}}
\newcommand\logcrys{\mathrm{logcrys}}
\newcommand\st{\mathrm{st}}
\newcommand\pst{\mathrm{pst}}
\newcommand\dR{\mathrm{dR}}
\newcommand\sm{\mathrm{sm}}
\newcommand\sep{\mathrm{sep}}
\newcommand\Cst{\ensuremath{C_\st}}
\newcommand\ur{\mathrm{ur}}
\newcommand\abel{\mathrm{ab}}
\newcommand\tame{\mathrm{tame}}
\newcommand\rationalto{\to}
\newcommand\isomto{\stackrel{\sim}{\to}}
 \theoremstyle{plain}
 \newtheorem{thm}{Theorem}[section]
 \newtheorem{lem}[thm]{Lemma}
 \newtheorem{prop}[thm]{Proposition}
 \newtheorem{cor}[thm]{Corollary}
 \theoremstyle{definition}
 \newtheorem{rem}[thm]{Remark}
 \newtheorem{defn}[thm]{Definition}
\begin{document}

\title{Good reduction criterion for K3 surfaces}
\author{Yuya Matsumoto}
\address{Graduate School of Mathematics, Nagoya University}
\email{matsumoto.yuya@math.nagoya-u.ac.jp}
\date{2017/01/05.
The final publication is available at Springer via \url{http://dx.doi.org/10.1007/s00209-014-1365-8}
(Math.\ Z. \textbf{279} (2015), no. 1--2, 241--266).
}
\subjclass[2010]{Primary 14J28; Secondary 11G25, 14G20} 
\keywords{K3 surfaces, good reduction, Galois representations, period map, complex multiplication}

\begin{abstract}
We prove a N\'eron--Ogg--Shafarevich type criterion for good reduction of K3 surfaces,
which states that a K3 surface over a complete discrete valuation field has potential good reduction 
if its $l$-adic cohomology group is unramified.
We also prove a $p$-adic version of the criterion.
(These are analogues of the criteria for good reduction of abelian varieties.)
The model of the surface will be in general not a scheme but an algebraic space.
As a corollary of the criterion we obtain the surjectivity of the period map of K3 surfaces in positive characteristic.
\end{abstract}

\maketitle

\section{Introduction}

Let $K$ be a complete discrete valuation field with perfect residue field of characteristic $p$.
The N\'eron--Ogg--Shafarevich criterion states that
an abelian variety $A$ has good reduction if and only if
 $H^1_\et(A_{\overline K}, \bQ_l)$ is an unramified representation\footnote
{A representation of the absolute Galois group of a discrete valuation field is said to be unramified if the inertia subgroup acts trivially.}
 (or equivalently, the $l$-adic Tate module of $A$ is unramified) 
for some prime $l \neq p$ (then it is so for all $l \neq p$).
A $p$-adic counterpart of this result is that
$A$ has good reduction if and only if $H^1_\et(A_{\overline K}, \bQ_p)$ is a crystalline representation.

Such criteria do not hold for general varieties 
(if a variety has good reduction then its cohomology groups are unramified/crystalline, 
but the converse is not true).

In this paper we prove criteria
for K3 surfaces
(in both $l$-adic and $p$-adic settings)
similar to those for abelian varieties.
We allow \emph{algebraic spaces} as models 
and consider \emph{potential} good reduction 
(that is, we allow finite extension of the base field). 
More precisely, our main theorem is the following.

\begin{thm} \label{thm:maintheorem}
Let $X$ be a K3 surface over $K$
which admits an ample line bundle $L$ satisfying $p > L^2 + 4$.
Assume that one of the following holds:
\begin{enumerate}[\rm (a)]
\item For some prime $l\neq p$, $H^2_\et(X_{\overline K}, \bQ_l)$ is unramified.
\item ($K$ is of characteristic $0$ and) $H^2_\et(X_{\overline K}, \bQ_p)$ is crystalline.
\end{enumerate}
Then 
$X$ has potential good reduction with an algebraic space model, 
that is, for some finite extension $K'/K$, 
there exists an algebraic space smooth proper over $\cO_{K'}$ with generic fiber isomorphic to $X_{K'}$.
\end{thm}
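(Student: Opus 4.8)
The plan is to reduce the statement to the N\'eron--Ogg--Shafarevich criterion for abelian varieties by means of the Kuga--Satake construction, and then to transport good reduction back from the associated abelian variety to $X$ through the theory of integral models of orthogonal Shimura varieties. The numerical hypothesis $p > L^2 + 4$ is exactly what makes the Kuga--Satake construction available integrally at $p$ (following Madapusi Pera): it guarantees that the primitive cohomology lattice attached to $(X,L)$ defines a $\GSpin$ Shimura datum whose integral canonical model $\mathscr S$ over $\cO_K$ is smooth, and that the Kuga--Satake period morphism from the moduli of polarized K3 surfaces to $\mathscr S$ extends over $\cO_K$ and is \'etale onto its image; the same range is where Ogus's crystalline Torelli theorem is applicable.

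The first two steps are cohomological. I would attach to $(X,L)$, over a finite extension of $K$, its Kuga--Satake abelian variety $A$. The Clifford-algebra formalism expresses the Galois representation on $H^1_\et(A_{\overline K},\bQ_l)$ (resp.\ $H^1_\et(A_{\overline K},\bQ_p)$) through tensor constructions on the primitive part of $H^2_\et(X_{\overline K},\bQ_l)$ (resp.\ $H^2_\et(X_{\overline K},\bQ_p)$), at worst up to an unramified twist that disappears after a further finite extension. Since the classes of unramified (resp.\ crystalline) representations are stable under subquotients, duals, and tensor products, hypothesis (a) forces $H^1_\et(A_{\overline K},\bQ_l)$ to be unramified and hypothesis (b) forces $H^1_\et(A_{\overline K},\bQ_p)$ to be crystalline. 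Applying the classical N\'eron--Ogg--Shafarevich criterion in case (a), and its $p$-adic counterpart (crystalline $\Leftrightarrow$ good reduction for abelian varieties, via the $C_{\st}$/$C_{\crys}$ comparison) in case (b), the abelian variety $A$ has potential good reduction; after replacing $K$ by a finite extension $K'$, it extends to an abelian scheme over $\cO_{K'}$.

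Next I would feed this back into the moduli theory. The surface $X_{K'}$, together with $L$ and the Kuga--Satake data, determines a $K'$-point $x$ of $\mathscr S$. Because $\mathscr S$ is constructed inside a Siegel moduli space and its points remember the Kuga--Satake abelian scheme, the extension property of the integral canonical model --- the Shimura-variety incarnation of N\'eron--Ogg--Shafarevich --- applies: good reduction of $A$ means precisely that $x$ extends to an integral point $\tilde x \in \mathscr S(\cO_{K'})$. Since the K3 period morphism is \'etale onto its image and $x$ already lies in that image (it comes from the genuine K3 surface $X_{K'}$, so no use is made of the full char-$p$ surjectivity that is this paper's corollary), the valuative criterion lifts $\tilde x$ to an integral point of an \'etale cover of the moduli stack of polarized K3 surfaces whose generic fibre is $(X_{K'},L)$; pulling back the universal family yields a smooth proper model of $X_{K'}$ over $\cO_{K'}$.

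The main obstacle, and the reason the conclusion produces only an algebraic space, is concentrated in this last step. The polarization $L$ need not specialize to an ample class: the Picard lattice of the special fibre can be strictly larger, so the class of $L$ may meet a wall of the ample cone (a $(-2)$-class can appear in the special fibre), and there is then no $\cO_{K'}$-flat \emph{projective} model with generic fibre $(X_{K'},L)$. One must therefore replace the polarized moduli scheme by Artin's algebraization, which outputs an algebraic space rather than a scheme, and then verify that the resulting proper smooth algebraic space genuinely has $X_{K'}$ as its generic fibre --- an identification that matches the period of the constructed family with that of $X$ via the Torelli theorem. Controlling this polarization degeneration and carrying out the algebraization together with the Torelli comparison in mixed and positive characteristic, where one leans on Ogus's crystalline Torelli theorem within the range $p > L^2+4$, is where the substantive work lies.
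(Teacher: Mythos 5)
Your route is genuinely different from the paper's, but it has a fatal gap at the step where you transport good reduction from the Kuga--Satake abelian variety back to the K3 surface. You write that since the period map $\iota \colon M_{2d,\bK} \to \cS_\bK(\Lambda_d)$ is \'etale onto its image and the generic point $x$ lies in that image, ``the valuative criterion lifts $\tilde x$'' to an integral point of the K3 moduli space. \'Etale morphisms are not proper and satisfy no valuative criterion: the image of $\iota$ is open but not closed, so the closed point of $\Spec \cO_{K'}$ may land outside it, and nothing in your argument prevents this. What you need is exactly the extension property of Proposition \ref{prop:extension} (equivalently, surjectivity of the period map onto the special fiber of the integral model, Theorem \ref{thm:surjectivity}), and in this paper those statements are \emph{deduced from} Theorem \ref{thm:maintheorem}, not the other way around. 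Remark 4.4 makes the logical order explicit: the main theorem for one prime $l$ gives the extension property, which then gives back the criterion for all $l$ --- but only under the stronger hypothesis $p > 9L^2+4$ (one must pass to $L^{\otimes 3}$ to guarantee very ampleness). So your argument is circular as a proof of the theorem as stated, and even if the circularity were repaired it would not reach the bound $p > L^2+4$. A secondary error: the hypothesis $p > L^2+4$ has nothing to do with integrality of the Kuga--Satake construction or with Ogus's crystalline Torelli theorem; in the paper it is the condition $p > 2g+2$ needed to apply T.~Saito's log-smoothness criterion to the genus-$(d+1)$ pencil cut out by $|L|$.

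The paper's actual proof is a degeneration argument with no Shimura varieties: from $L$ one builds a fibration over $\bP^1$ and, via Saito and Kawamata's mixed-characteristic MMP, a projective model with trivial relative canonical class whose only non-semistable singularities are rational double points in the special fiber; Artin's simultaneous resolution (available only in algebraic spaces --- this, not ``Artin algebraization,'' is where the algebraic space enters) produces a strictly semistable algebraic space whose special fiber is an SNC log K3 surface; Nakkajima's classification leaves Types I, II, III, and the unramified/crystalline hypothesis kills Types II and III through the vanishing of $E_2^{1,1}$ and $E_2^{2,0}$ in the ($l$-adic or $p$-adic) weight spectral sequence extended to algebraic spaces in Section \ref{sec:ss}. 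Your first two steps (unramifiedness/crystallinity of $H^1$ of the Kuga--Satake abelian variety, hence its potential good reduction) are correct and are in fact used in the paper --- but in the proof of Proposition \ref{prop:extension}, downstream of the main theorem, not as a substitute for it.
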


\begin{rem} \label{rem:maintheorem}
(1)
We cannot replace ``algebraic space'' in the statement of the theorem by ``scheme'';
we present counterexamples in Section \ref{subsec:example-algsp}.
Hence it is, in contrast to the case of abelian varieties,
 somewhat essential to allow algebraic spaces (not only schemes)
when considering reduction of K3 surfaces.

(2)
We do not know whether the field extension is necessary. 
\footnote{This problem is considered further in my subsequent paper with Christian Liedtke \cite{Liedtke--Matsumoto}.}
Under an additional assumption we give a bound for the extension degree (Corollary \ref{cor:mainthm-degree2}).
See also the example in Section \ref{subsec:example-unram}.

(3) 
If one prefers to stay in the category of schemes,
we have the following:
If $X$ is as in the theorem, 
then for some finite extension $K'/K$, 
there exists a proper flat scheme $\cX$ over $\cO_{K'}$ with generic fiber isomorphic to $X_{K'}$ and
special fiber having at worst rational double point singularities.

(4)
The condition on the degree $L^2$ of the line bundle is satisfied for example in the following cases:
(i) $p \geq 7$ and $X$ is a double cover of $\bP^2$ ramifying over a sextic; or
(ii) $p \geq 11$ and $X$ is a quartic surface in $\bP^3$.
On the other hand, for each $p$ there exist (infinitely many) K3 surfaces not satisfying the condition, 
and for $2 \leq p \leq 5$ the condition is never satisfied.

(5)
If we know in advance that $X$ (or some surface birationally equivalent to $X$) has potential semistable reduction,
the condition $p > L^2 + 4$ can be replaced by the (weaker) condition $p \geq 5$, 
since the inequality $p > L^2 + 4$ is used only in the first step of the proof (see the outline below).
Potential semistable reduction of general surfaces in mixed characteristic is an open problem.
\end{rem}

We have two applications of the main theorem.
First, we can prove the surjectivity of the period map of K3 surfaces
in positive characteristic (Theorem \ref{thm:surjectivity}).
Second, we can show that K3 surfaces having complex multiplication (see Definition \ref{def:CMK3})
have potential good reduction (Theorem \ref{thm:CMpotgoodred}).

\medskip

We review known results concerning this kind of criterion.

Serre--Tate \cite{Serre--Tate}*{Theorem 1} proved the criterion for abelian varieties
and gave the name ``N\'eron--Ogg--Shafarevich criterion''
after related works of N\'eron \cite{Neron:modeles}, Ogg \cite{Ogg:elliptic}, and Shafarevich.
The $p$-adic version is obtained by Coleman--Iovita \cite{Coleman--Iovita}*{Theorem II.4.7} and Breuil \cite{Breuil:p-divisibles}*{Corollaire 1.6}.
These criteria fail for general varieties, even for curves of genus $\geq 2$
(but there are results of Oda \cite{Oda:fundamentalgroup}*{Theorem 3.2} and Andreatta--Iovita--Kim \cite{Andreatta--Iovita--Kim}*{Theorem 1.6} 
relating good reduction of curves with $l$-adic and $p$-adic fundamental groups respectively).
Kulikov \cite{Kulikov:degeneration} essentially showed the (potential) criterion for complex K3 surfaces 
in the category of complex manifolds (not necessarily schemes).
In mixed characteristic, 
Ito \cite{Ito:Kummer}*{Corollary 4.3}\footnote{
This paper of Ito is unpublished, but it is included in \cite{Matsumoto:SIP}*{Theorem 1.18} as an appendix.}
 and myself \cite{Matsumoto:SIP}*{Theorem 0.1} proved analogues of the N\'eron--Ogg--Shafarevich criterion 
for some special kinds of K3 surfaces
using the geometry of the surfaces (which are closely related to abelian surfaces).

\medskip

The outline of the proof of Theorem \ref{thm:maintheorem} (given in Section \ref{sec:proof}) is as follows.

We follow a method of Maulik \cite{Maulik:supersingular} of studying reduction of K3 surfaces.
Using the line bundle $L$ and results of Saito \cite{Saito:logsmooth} and Kawamata \cite{Kawamata:mixed3fold},
we obtain a model of $X$ with log terminal singularities which are well described.
Using a result of Artin \cite{Artin:brieskorn},
we can resolve some of the singularities of that model 
in the category of algebraic spaces
and we obtain a strictly semistable model (which is an algebraic space).
The special fiber of that space is then an SNC (simple normal crossing) log K3 surface, 
which is classified by Nakkajima \cite{Nakkajima:logk3}
(in a parallel way to the characteristic $0$ case \cite{Kulikov:degeneration}).
Using the unramified/crystalline assumption and a comparison result between the cohomology groups of the generic and the special fiber,
we conclude that the special fiber is actually smooth.

In the last step we need some comparison results, 
which are not well-known since the model is not a scheme in general.
In the $p$-adic case (Section \ref{sec:p-adic}), 
we use Olsson's results \cite{Olsson:crystalline} on the Hyodo--Kato isomorphisms for algebraic spaces.
In the $l$-adic case (Section \ref{sec:l-adic}), 
we generalize the (Steenbrink--Rapoport--Zink) weight spectral sequence 
to the algebraic space case.

\subsection*{Acknowledgments}
The author expresses his sincere gratitude to his advisor 
Atsushi Shiho for supporting him in many ways.
The author also thanks 
Takuma Hayashi, Tetsushi Ito, Teruhisa Koshikawa, Keerthi Madapusi Pera, Yukiyoshi Nakkajima, Takeshi Saito, Naoya Umezaki, and Kohei Yahiro
for giving him helpful comments.
This work was supported by Grant-in-Aid for JSPS Fellows Grant Number 12J08397.

\section{Comparison theorems for semistable algebraic spaces} \label{sec:ss}

In this section we prove comparison theorems of 
($l$-adic and $p$-adic) cohomology groups of fibers of a semistable algebraic space,
which we will use in the proof of the main theorem.
For general properties of algebraic spaces, see \cite{Knutson:algebraicspaces}.

\subsection{Statement of the comparison theorems}

Let $\cO_K$ be a complete discrete valuation ring with perfect residue field of characteristic $p > 0$.
Denote by $G_K$ the absolute Galois group of $K$.
We first introduce the notion of semistable algebraic spaces over $\cO_K$.

Recall that an algebraic space is said to be irreducible if 
the intersection of any two nonempty open subspaces (images of open immersions) is nonempty,
and that an irreducible component of an algebraic space is a maximal irreducible closed subspace.

\begin{defn}

An algebraic space $X$ over $\cO_K$ is said to be \emph{semistable} purely of dimension $n$ if
it is \'etale-locally isomorphic to $\cO_K[x_1, \ldots, x_{n+1}] \allowbreak / \allowbreak (x_1 \cdots x_r - \pi)$,
where $\pi$ is a uniformizer of $\cO_K$.
It is \emph{strictly semistable} if moreover 
each irreducible component of the special fiber is smooth.

\end{defn}
Although irreducibility is not an \'etale local property, 
an \'etale covering of a strictly semistable algebraic space is always strictly semistable.
If $X$ is a scheme, these definitions are of course equivalent to the usual ones.

We use the following notation for a strictly semistable algebraic space $X$ over $\Spec \cO_K$.
\begin{itemize}
\item
$X_{K}$ and $X_k$ are the generic and special fibers of $X$.

\item
$X_{\overline K}$ and $X_{\overline k}$ are the corresponding geometric fibers.

\item
$Z_h$ ($h = 1, \ldots, m$) are the irreducible components of $X_k$.
By assumption each $Z_h$ is smooth and 
hence all connected components of each $(Z_h)_{\overline k}$ are smooth (in particular irreducible).

\item
$X_k^{(p)}$ is
 the disjoint union of the
smooth $(n-p)$-dimensional (possibly empty) subspaces $Z_H = Z_{h_0} \cap \cdots \cap Z_{h_p}$
for sets $H = \{ h_0, \ldots, h_{p} \} \subset \{ 1, \ldots, m \}$ of cardinality\footnote{
This numbering is the same to that of Saito \cite{Saito:weightSS}.
Some authors (e.g. \cite{Rapoport--Zink:monodromie}, \cite{Mokrane:spectrale}, \cite{Illusie:autour}) write $-^{(p+1)}$ for this space.
} $p+1$.

\item
$X_{\overline k}^{(p)}$ is constructed similarly from the irreducible components of $X_{\overline k}$.
(Since each $(Z_h)_{\overline k}$ is smooth, this is naturally identified  with $X_k^{(p)} \times_k \overline k$).
\end{itemize}

We now state the comparison theorems. 
Proofs will be given in the following subsections.

\begin{prop} \label{prop:p-ss}
Assume $K$ is of characteristic $0$. 
Let $X$ be a proper strictly semistable algebraic space over $\cO_K$
whose fibers are $2$-dimensional schemes.
Let $W = W(k)$ and $K_0 = \Frac W$.

(1) We have a $p$-adic spectral sequence
\[
 E_1^{p,q} = \bigoplus_{i \geq \max\{0,-p\}} H^{q-2i}_\crys (X_k^{(p+2i)}/W)(-i)
  \Rightarrow H^{p+q}_\logcrys(X_k/W).
\]
Moreover this spectral sequence is compatible with the monodromy operator in the following sense:
There is an endomorphism $N$ on the Hyodo--Steenbrink complex $W_n A^{\bullet}$ defined by Mokrane \cite{Mokrane:spectrale}*{Section 3.13}
which induces a map 
\[
\xymatrix{ \displaystyle
 E_1^{p,q} = \bigoplus _{i \geq \max\{0,-p\}} H^{q-2i}_\crys (X_k^{(p+2i)}/W)(-i)
  \ar@{=>}[r]
  \ar[d]^{N} 
 & H^{p+q}_\logcrys(X_k/W)
  \ar[d]^{N} \\
  \displaystyle
 E_1^{p+2,q-2} = \bigoplus _{i-1 \geq \max\{0,-p-2\}} H^{q-2i}_\crys (X_k^{(p+2i)}/W)(-i+1)
  \ar@{=>}[r]
 & H^{p+q}_\logcrys(X_k/W)
}
\]
of spectral sequences.

(2) The spectral sequence degenerates at $E_2$ modulo torsion.

(3) The morphism $N$ induces the following isomorphisms on $E_2$ terms modulo torsion
\[
 N   \colon {E_2^{-1,3}}_\bQ \isomto E_2^{1,1}(-1)_\bQ, \quad
 N^2 \colon {E_2^{-2,4}}_\bQ \isomto E_2^{2,0}(-2)_\bQ.
\]

(4) 
Assume moreover that $X_k$ is liftable to a semistable scheme over $K_0$.
(This liftability assumption is satisfied, for example,
if $X_k$ is a projective SNC log K3 surface of Type II or III
\cite{Nakkajima:logk3}*{Corollary 6.9}.\footnote{
In the published version this ``Type II or III'' assumption is missed.
This does not affect the proof of the main theorem since if $X_k$ is Type I 
then there is nothing to prove.}) 
Then we have an isomorphism 
\[
 H^2_\logcrys(X_k/W) \otimes_{W} K \cong (D_\pst(H^2_\et(X_{\overline K}, \bQ_p)) \otimes_{K_0^\ur} \overline K)^{G_K}
\]
compatible with the operator $N$.
In particular, if $H^2_\et$ is crystalline, 
then the operator $N$ on the right hand side of the spectral sequence in (1) is zero modulo torsion.
\end{prop}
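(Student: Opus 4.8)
The plan is to regard parts (1)--(3) as the crystalline weight spectral sequence for the \emph{scheme} $X_k$, reserving the one genuinely new ingredient---Olsson's comparison for algebraic spaces---for part (4). First I would observe that although $X$ is only an algebraic space, it is \'etale-locally isomorphic to $\Spec\cO_K[x_1,\dots,x_{n+1}]/(x_1\cdots x_r-\pi)$, so the normal-crossings log structure on $X_k$ is \'etale-locally standard and glues to an honest log structure on the scheme $X_k$, and each stratum $X_k^{(p)}$ is a smooth proper scheme. Consequently $H^\bullet_\logcrys(X_k/W)$ and the $E_1$-terms $H^\bullet_\crys(X_k^{(p)}/W)$ are the usual (log) crystalline cohomology groups of schemes, and Mokrane's construction \cite{Mokrane:spectrale} of the Hyodo--Steenbrink complex $W_nA^\bullet$, its weight filtration, the associated spectral sequence, and the monodromy endomorphism $N$ all apply verbatim; this gives (1).

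For (2) I would run the standard Frobenius-weight argument: after $\otimes\bQ_p$ each $E_1^{p,q}$ is pure of weight $q$ by the Weil bounds for crystalline cohomology of smooth proper schemes, while $d_r$ for $r\geq 2$ strictly lowers the weight and hence vanishes, so the sequence degenerates at $E_2$ modulo torsion. For (3), unwinding the indexing shows that $E_1^{-1,3}$ and $E_1^{1,1}$ are both $H^1_\crys$ of the double curves $X_k^{(1)}$ up to a Tate twist, while $E_1^{-2,4}$ and $E_1^{2,0}$ are both $H^0_\crys$ of the triple points $X_k^{(2)}$; the asserted isomorphisms are thus the $H^2$ cases of the monodromy--weight statement controlled respectively by $H^1$ of smooth proper curves and by the combinatorics of the dual complex of $X_k$, both of which are known in this low-dimensional range.

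The heart of the proposition is part (4), where the algebraic-space nature of $X$ finally enters: since $H^2_\logcrys(X_k/W)$ sees only the special fiber, relating it to the Galois representation $V:=H^2_\et(X_{\overline K},\bQ_p)$ requires comparing across the non-scheme total space $X$. Here I would invoke Olsson's extension of the $C_\st$/Hyodo--Kato comparison \cite{Olsson:crystalline} to proper log-smooth algebraic spaces, applied under the liftability hypothesis, to obtain a Frobenius- and $N$-compatible isomorphism $H^2_\logcrys(X_k/W)\otimes_W K_0\cong D_\st(V)$. Since $V$ is de Rham, indeed semistable by the geometry, the $p$-adic Hodge formalism supplies $D_\dR(V)=(D_\pst(V)\otimes_{K_0^\ur}\overline K)^{G_K}$ together with $D_\dR(V)=D_\st(V)\otimes_{K_0}K$; base-changing the previous isomorphism along $K_0\hookrightarrow K$ then yields the stated identification, compatibly with $N$. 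The final assertion is now formal: if $V$ is crystalline its monodromy operator vanishes, so $N=0$ on the right-hand side and hence, modulo torsion, on $H^2_\logcrys(X_k/W)$ and on the abutment.

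The step I expect to fight with is part (4). The delicate points will be checking that Olsson's comparison is compatible not merely with Frobenius and the Hodge filtration but with the specific monodromy operator $N$ coming from $W_nA^\bullet$, and pinning down exactly how the liftability hypothesis is used to bring Olsson's theorem (and the $K_0$-rational Hyodo--Kato structure) to bear; by contrast, parts (1)--(3) should reduce cleanly to the established scheme-theoretic statements once the \'etale-local description of $X$ is in hand.
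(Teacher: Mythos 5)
Your overall strategy coincides with the paper's: parts (1)--(3) are statements about the special fiber $X_k$, which is a genuine (log) scheme, so the scheme-theoretic results apply directly --- the paper simply cites Mokrane for (1), Nakkajima for the $E_2$-degeneration in (2), and Nakkajima's $p$-adic weight--monodromy results for $H^2$ of $2$-dimensional log schemes for (3), rather than rerunning the weight argument. One caveat on your proof of (2): the residue field $k$ is only assumed perfect, not finite, so ``purity of weight $q$ by the Weil bounds'' is not directly available; one must either spread out to the finite-field case or cite the degeneration result as the paper does. Your identification of the relevant $E_1$-terms in (3) is correct.

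The real issue is (4), and you have correctly located the difficulty but not resolved it: there is no off-the-shelf ``$\Cst$/Hyodo--Kato comparison for proper log-smooth algebraic spaces'' relating $H^2_\et(X_{\overline K},\bQ_p)$ directly to $H^2_\logcrys(X_k/W)$ with its operator $N$ from $W_nA^\bullet$. What Olsson proves is a comparison with \emph{his own} crystalline cohomology $\varprojlim_n H^2_\crys(X_k/\cS_{W_n})$ relative to his stack $\cS$. The paper's argument therefore has two separate steps: (i) the de Rham cohomology $H^2_\dR(X_K/K)$ of the generic fiber (a scheme) carries a $(\phi,N,G_K)$-structure with $D=D_\pst(H^2_\et(X_{\overline K},\bQ_p))$ via Tsuji's $\Cst$ and de Jong's alterations, and Olsson's Theorem 9.6.9 identifies this with the $(\phi,N,G_K)$-structure coming from $\varprojlim_n H^2_\crys(X_k/\cS_{W_n})$, which depends only on $X_k$; (ii) the liftability hypothesis produces a semistable \emph{scheme} $Y$ over $\cO_{K_0}$ with $Y_k\cong X_k$, and Olsson's Theorem 9.6.7 (applied to $Y$) identifies the $\cS$-crystalline structure with the classical Hyodo--Kato structure on $H^2_\logcrys(Y_k/W)\otimes_W K_0^\ur$; since both cohomologies depend only on the special fiber, one transports this back to $X$. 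The liftability is thus used precisely to bridge Olsson's $\cS$-crystalline cohomology and the Hyodo--Kato log-crystalline cohomology carrying the monodromy operator of the proposition --- this bridge is the missing step in your write-up, and it is where the $N$-compatibility you worry about is actually established.
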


\begin{prop}  \label{prop:weightss}
Let $X$ be as in the previous proposition (with no assumption on $\charac K$).
Let $l \neq p$ be a prime.
Let $\Lambda$ be $\bZ/l^n\bZ$, $\bZ_l$, or $\bQ_l$.

(1) We have an $l$-adic spectral sequence
\[
 E_1^{p,q} = \bigoplus_{i \geq \max\{0,-p\}} H^{q-2i}_\et (X_{\overline k}^{(p+2i)}, \Lambda(-i))
  \Rightarrow H^{p+q}_\et(X_{\overline K}, \Lambda)
\]
compatible with the action of $G_K$.
Moreover this spectral sequence is compatible with the monodromy operator in the following sense:
Let $T$ be an element of the inertia group $I_K$ such that $t_l(T)$ is a generator of $\bZ_l(1)$
(where $t_l \colon I_K \to \bZ_l(1)$ is the canonical surjection). 
Then the endomorphism $N = T-1$ of the complex $R \psi \Lambda$ of nearby cycles (defined later) induces a map
\[
\xymatrix{ \displaystyle
 E_1^{p,q} = \bigoplus _{i \geq \max\{0,-p\}} H^{q-2i} (X_{\overline k}^{(p+2i)}, \Lambda(-i))
  \ar@{=>}[r]
  \ar[d]^{1 \otimes t_l(T)} 
 & H^{p+q}(X_{\overline K}, \Lambda) 
  \ar[d]^{N} \\
  \displaystyle
 E_1^{p+2,q-2} = \bigoplus _{i-1 \geq \max\{0,-p-2\}} H^{q-2i} (X_{\overline k}^{(p+2i)}, \Lambda(-i+1))
  \ar@{=>}[r]
 & H^{p+q}(X_{\overline K}, \Lambda) \\
}
\]
of spectral sequences\footnote{
  We fixed a small error in the corresponding formula in \cite{Saito:weightSS}*{Corollary 2.8}: 
  his $i-1 \geq \max\{0,-p\}$ should be $i-1 \geq \max\{0,-p-2\}$.
}.

(2) The spectral sequence degenerates at $E_2$ modulo torsion.

(3) Let $\Lambda = \bQ_l$. 
The morphism $N$ induces the following isomorphisms on $E_2$ terms 
\[
 N   \colon E_2^{-1,3} \isomto E_2^{1,1}(-1), \quad
 N^2 \colon E_2^{-2,4} \isomto E_2^{2,0}(-2).
\]
\end{prop}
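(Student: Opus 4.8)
The plan is to adapt the construction of the $l$-adic weight spectral sequence for strictly semistable schemes, due to Rapoport--Zink, Steenbrink, and Saito \cite{Saito:weightSS}, to the present algebraic space setting. The key structural observation is that, although the total space $X$ is only an algebraic space, its special fiber $X_{\overline k}$ and all the strata $X_{\overline k}^{(p)}$ are schemes by hypothesis, while the nearby cycle complex $R\psi\Lambda$ is a complex of \'etale sheaves on the scheme $X_{\overline k}$ whose local structure is governed entirely by the \'etale-local geometry of $X$ along $X_k$. Since $X$ is strictly semistable, it is \'etale-locally isomorphic to the standard model $\cO_K[x_1,\dots,x_{n+1}]/(x_1\cdots x_r-\pi)$, so the Rapoport--Zink description of $R\psi\Lambda$ together with its monodromy (weight) filtration $M_\bullet$ holds \'etale-locally exactly as in the scheme case.

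First I would set up the nearby cycle functor $R\psi$ for the morphism $X\to\Spec\cO_K$ of algebraic spaces, using that $R\psi$ commutes with \'etale localization; concretely one may choose an \'etale atlas $U\to X$ by a (necessarily strictly semistable) scheme and descend, or work directly in the \'etale topos of $X$. The weight filtration $M_\bullet R\psi\Lambda$ and Saito's complex-level monodromy operator $N$ are canonical and \'etale-local, hence descend to a filtered complex on the scheme $X_{\overline k}$. Computing $\mathrm{gr}^M_\bullet R\psi\Lambda$ \'etale-locally identifies its graded pieces with suitably shifted and Tate-twisted pushforwards of constant sheaves from the smooth strata $Z_I$ (which are schemes); the spectral sequence of the filtered complex $(R\psi\Lambda,M)$ abutting to $H^*(X_{\overline k},R\psi\Lambda)\cong H^*(X_{\overline K},\Lambda)$ then produces the $E_1$-page asserted in part (1). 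Both the $G_K$-equivariance and the compatibility with $N$ (the displayed diagram) are inherited from the functoriality of these constructions, precisely as in \cite{Saito:weightSS}.

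For part (2) I would run the standard weight argument: by purity (Deligne's Weil~II) applied to the smooth proper schemes $X_{\overline k}^{(p+2i)}$, each summand $H^{q-2i}_\et(X_{\overline k}^{(p+2i)},\Lambda(-i))$ is pure of weight $q$, so $E_1^{p,q}$, and hence its subquotient $E_2^{p,q}$, is pure of weight $q$. The differential $d_r\colon E_r^{p,q}\to E_r^{p+r,q-r+1}$ for $r\geq2$ then vanishes modulo torsion, as it would relate pure pieces of the distinct weights $q$ and $q-r+1$. For part (3) (where $\Lambda=\bQ_l$, so torsion is not an issue), the isomorphisms $N\colon E_2^{-1,3}\isomto E_2^{1,1}(-1)$ and $N^2\colon E_2^{-2,4}\isomto E_2^{2,0}(-2)$ constitute the $l$-adic monodromy--weight isomorphism for $H^2$, centered at the middle weight $2$; this is known for surfaces \cite{Rapoport--Zink:monodromie}, and its proof, which uses the hard Lefschetz theorem on the smooth proper strata $Z_I$ together with the explicit form of $N$ as a map of the $E_1$-complex, applies verbatim here since all the strata $X_{\overline k}^{(p)}$ are schemes.

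The main obstacle lies entirely in part (1): defining the nearby cycle complex and its monodromy filtration for an algebraic space, and verifying that Saito's complex-level constructions descend along an \'etale atlas. The delicate point is the bookkeeping of the global irreducible components of $X_k$ (used to define the strata $X_{\overline k}^{(p)}$) against the \'etale-local branches of the semistable structure; here the strict semistability hypothesis, which forces each irreducible component to be smooth, is exactly what makes the local branches coherently labeled and the descent go through. Once this \'etale-descent step is in place, parts (2) and (3) are formal consequences of purity and hard Lefschetz on the strata, both available because all fibers and strata are schemes.
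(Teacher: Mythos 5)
Your part (1) follows essentially the same route as the paper: define $R\psi\Lambda=\bar i^*R\bar j_*\Lambda$ on the algebraic space, transport Saito's filtrations \'etale-locally, and descend. Two points you gloss over are genuinely load-bearing, though. First, objects of $D^b_c$ do not satisfy \'etale descent, so ``descend along an atlas'' is not automatic; the reason it works here (and the reason the paper routes everything through Laszlo--Olsson's theory of perverse sheaves on algebraic spaces) is that $R\psi\Lambda$ and all the filtration steps $F'_p$, $G'^q$, $M_r$ are $(-n)$-shifted \emph{perverse} sheaves, and perverse sheaves do glue along \'etale covers. Second, the identification of the abutment $H^{p+q}(X_{\overline k},R\psi\Lambda)\cong H^{p+q}_\et(X_{\overline K},\Lambda)$, which you assert in passing, requires the proper base change theorem for algebraic spaces; the paper cites Liu--Zheng for this (Artin's argument only covers residue characteristic zero without extra work).

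The real gap is in your part (2). You invoke purity (Weil~II) for the smooth proper strata $X_{\overline k}^{(p+2i)}$, but the residue field $k$ is only assumed perfect --- it need not be finite or even finitely generated over $\bF_p$ --- so ``weight $q$'' is not defined and Weil~II does not apply as stated. The usual repair is to spread out to a situation over a finite field, but here the higher differentials $d_r$ ($r\geq 2$) depend a priori on the total space $X$, an algebraic space over a complete discrete valuation ring, and spreading \emph{that} out is exactly the kind of thing one wants to avoid. The paper takes a different route: following Nakayama, it identifies $R\psi\Lambda$ with the complex of log nearby cycles attached to the log special fiber alone (the isomorphism \eqref{eq:nearbycycles}, descended to the algebraic space via perverse descent), and then quotes Nakayama's $E_2$-degeneration theorem for the log weight spectral sequence --- which is available because $X_k$ is a scheme by hypothesis. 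This both handles general perfect $k$ and makes the spectral sequence depend only on the log special fiber, a fact the paper also exploits in part (3). Your part (3) is essentially the paper's argument (reduce to Rapoport--Zink's Satz 2.13 via hard Lefschetz on the strata), except that the paper must first verify the explicit description of $d_1$ as $\sum\delta_{(p+2i)*}+\delta_{p+2i}^*$ in the algebraic space setting (its Lemma \ref{wss210}) before Rapoport--Zink's computation can be quoted; you should not call this ``verbatim'' without that check.
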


\subsection{Proof of $p$-adic case} \label{sec:p-adic}

We prove Proposition \ref{prop:p-ss}.

Note that (1)--(3) are statements for the special fiber, which is a (log) scheme.

(1),(2)
These are proved (without restriction on the dimension) by Mokrane \cite{Mokrane:spectrale}*{Section 3.23 and Proposition 3.18} 
and Nakkajima \cite{Nakkajima:spectral}*{Theorem 3.6} respectively.

(3)
By \cite{Nakkajima:signs}*{Remark 6.8 (1)} (cf.~\cite{Nakkajima:monodromies}*{Theorem 8.3})
the weight monodromy conjecture is true for $H^2$ of $2$-dimensional log schemes.

(4)
If the algebraic space $\cX$ is a scheme,
then this follows from the isomorphism of \Cst.
We use results of Olsson \cite{Olsson:crystalline} to extend this argument to algebraic spaces
(actually he treats a more general case of tame Deligne--Mumford stacks).
A $(\phi,N,G_K)$-structure (in the sense of Olsson \cite{Olsson:crystalline}*{Definition 0.1.1}) on a $K$-vector space $T$ is 
a collection $(D, \phi, N, \{ \rho_\pi \})$ of
a finite dimensional $K_0^\ur$-vector space $D$ with 
a continuous semilinear $G_K$-action, 
a Frobenius-semilinear automorphism $\phi$ on $D$, 
a $K_0^\ur$-linear nilpotent endomorphism $N$ on $D$, 
and a family of isomorphisms $\rho_\pi \colon T \stackrel\sim\to (D \otimes_{K_0^\ur} \overline K)^{G_K}$ 
indexed by the uniformizers $\pi$ of $K$,
subject to certain compatibilities.

Since the generic fiber $X_K$ of our $\cX$ is a scheme,
$H^2_\dR(X_K/K)$ is equipped with a $(\phi,N,G_K)$-structure with $D = D_\pst(H^2_\et(X_{\overline K}, \bQ_p)) $
through the \Cst\ isomorphism of Tsuji (and de Jong's alteration).
Olsson defined another $(\phi,N,G_K)$-structure on $H^2_\dR(X_K/K)$ in the following way:
Let $\cS_H(\alpha)$ be the stack (over $\bZ[t]$) defined in \cite{Olsson:crystalline}*{Sections 6.1--6.2}
with parameters $\alpha = (\alpha_1, \ldots, \alpha_r) = (1, \ldots, 1)$ and $H = \fS_r$ for $r$ large enough,
and $\cS = \cS_H(\alpha)_{\cO_K,\pi}$ be its base change by $\bZ[t] \to \cO_K \colon t \mapsto \pi$.
Any semistable algebraic space over $\cO_K$ with dimension at most $r$
admits a canonical smooth morphism to $\cS$ (\cite{Olsson:crystalline}*{Section 6.2}).
Hence we have a smooth morphism $\cX \to \cS$.
Then Olsson defines (\cite{Olsson:crystalline}*{Sections 6.4--6.5}) 
a projective system $(H^2_\crys(X_k / \cS_{W_n}))_n$ equipped with $\phi$ and $N$,
which depends only on the special fiber $X_k$,
and he shows that $D = (\varprojlim_{n} H^2_\crys(X_k / \cS_{W_n})) \otimes_W K_0^{\ur}$
gives a $(\phi,N,G_K)$-structure on $H^2_\dR(X_K/K)$.
Here $\cS_{W_n} = \cS_H(\alpha)_{W_n,0}$ is defined in the same way.
He shows also that these two $(\phi,N,G_K)$-structures are isomorphic (\cite{Olsson:crystalline}*{Theorem 9.6.9}).
In particular, we have an isomorphism
\[
 (D_\pst(H^2_\et(X_{\overline K}, \bQ_p)) \otimes_{K_0^\ur} \overline K)^{G_K} 
 \isomto (\varprojlim_{n} H^2_\crys(X_k / \cS_{W_n})) \otimes_W K
\]
compatible with $N$.

On the other hand, by assumption we have a semistable scheme $Y$ over $\cO_{K_0}$ with special fiber isomorphic to $X_k$.
Then $H^2_\dR(Y_{K_0}/K_0)$ admits a $(\phi,N,G_{K_0})$-structure with 
$D = (\varprojlim_{n} H^2_\crys(Y_k / \cS_{W_n})) \otimes K_0^{\ur}$. 
There is also a $(\phi,N,G_{K_0})$-structure 
with $D = H^2_\logcrys(Y_k/W) \otimes_W K_0^\ur$ 
defined by Hyodo--Kato \cite{Hyodo--Kato}*{Section 3}.
He shows that these two $(\phi,N,G_{K_0})$-structures are isomorphic (\cite{Olsson:crystalline}*{Theorem 9.6.7}).
In particular, we have an isomorphism
\[
 (\varprojlim_{n} H^2_\crys(Y_k / \cS_{W_n})) \otimes_W K_0 
 \isomto H^2_\logcrys(Y_k/W) \otimes_W K_0
\]
compatible with $N$.

Combining these with the obvious isomorphisms
\begin{align*}
 (\varprojlim_{n} H^2_\crys(X_k / \cS_{W_n})) \otimes_W K
 & \cong (\varprojlim_{n} H^2_\crys(Y_k / \cS_{W_n})) \otimes_W K \quad \text{and} \\
 H^2_\logcrys(X_k/W) \otimes_W K
 & \cong H^2_\logcrys(Y_k/W) \otimes_W K ,
\end{align*}
we obtain the desired isomorphism.
(The compatibility condition of the $(\phi,N,G_K)$-structure implies that
this isomorphism is independent of the choice of $\pi$ used in the constructions.
We do not need this.)

\subsection{Proof of $l$-adic case} \label{sec:l-adic}

In this subsection we prove Proposition \ref{prop:weightss}.

We will follow Saito's construction \cite{Saito:weightSS}*{Sections 1--2}
to formulate the weight spectral sequence of $l$-adic cohomology groups for semistable algebraic spaces.
The point of his construction is to give a filtration of $R \psi \Lambda$ by (shifted) perverse sheaves (Lemma \ref{wss25}).

First, 
we will need the theory of \'etale sheaves, derived categories, six functors, and perverse sheaves 
on algebraic spaces.
This is developed by Laszlo--Olsson in
\cites{Laszlo--Olsson:derived1,Laszlo--Olsson:derived2,Laszlo--Olsson:perverse}.
Since algebraic spaces ``are \'etale-locally schemes'', 
most properties of algebraic spaces (or objects on algebraic spaces)
can be defined by taking (the pullback by) an \'etale covering by a scheme.
For example, a perverse sheaf on an algebraic space $Y$ of finite type over a field is defined to be 
an object of the derived category $D^b_c(Y, \Lambda)$ 
whose pullback by some (equivalently any) \'etale covering $U \rightarrow X$ by a scheme 
is a perverse sheaf on $U$.

Let $f \colon X \rightarrow S = \Spec \cO_K$ be a strictly semistable algebraic space purely of relative dimension $n$. 
Define immersions $i$, $\bar i$, $j$, $\bar j$
by the diagram
\[
\xymatrix{
X_{\overline k} \ar[r]^{\bar i} \ar[d] &
X_{S^{\ur}} \ar[d] &
X_{\overline K} \ar[l]_{\bar j} \ar[d] \\
X_k \ar[r]^{i} &
X &
X_K \ar[l]_{j} .
}
\]
Let $R \psi \Lambda = \bar i^* R \bar j_* \Lambda$
 be the complex of nearby cycles, 
 which we regard as an object of the derived category $D^+_c(X_{k}, \Lambda)$
with a continuous action of $G_K$.

We introduce some morphisms:
\begin{itemize}
\item 
$i_h \colon Z_h \hookrightarrow X$ and $j_h \colon X \setminus Z_h \hookrightarrow X$ are the immersions.

\item 
$a_p \colon X_k^{(p)} \to X_k$ is the natural map (induced by the immersions).

\item
$i^{(p)} = i \circ a_p \colon X_k^{(p)} \to X$.

\item 
$\theta \colon \Lambda \to i^* R^1 j_* \Lambda(1)$ is the map 
sending $1$ to the boundary $\partial [\pi] \in H^1(K, \Lambda(1))$ of a prime element $\pi$ of $K$ with respect to the Kummer sequence.

\item 
$\theta_h \colon \Lambda_{Z_h} \to i_h^* R^1 j_{h *} \Lambda(1)$
are defined similarly. 

\item 
$\theta' \colon a_{0 *} \Lambda \to i^* R^1 j_* \Lambda(1)$ is the direct sum of the $\theta_h$'s.
\end{itemize}
These maps are all defined without problems in the algebraic space case.

\begin{lem}
 [cf.\ \cite{Saito:weightSS}*{Proposition 1.2, Corollary 1.3}] \label{wss12}
(1)
The map $\theta' \colon a_{0 *} \Lambda \rightarrow i^* R^1 j_* \Lambda(1)$
is an isomorphism and induces isomorphisms
$\theta' \colon a_{p *} \Lambda \rightarrow i^* R^{p+1} j_* \Lambda(p+1)$
for $p \geq 0$ by cup-product.

(2) Let $p \geq 0$.
The canonical map $i^* R^p j_* \Lambda \rightarrow R^p \psi \Lambda$ is surjective.
The map $\theta \cup \colon i^* R^p j_* \Lambda \rightarrow i^* R^{p+1} j_* \Lambda(1)$
induces a map $\bar \theta \colon R^p \psi \Lambda \rightarrow i^* R^{p+1} j_* \Lambda(1)$.
The sequence
\[
 0 \to R^p \psi \Lambda \stackrel{\bar \theta}{\to} i^* R^{p+1} j_* \Lambda(1) \to R^{p+1} \psi \Lambda(1) \rightarrow 0
\]
is exact.

(3)
Let $\delta \colon \Lambda \to a_{0 *} \Lambda$ be the canonical map.
Then we have an isomorphism
\[
\xymatrix{
0 \ar[r] & 
\Lambda \ar[r]^\delta \ar@{=}[d]&
a_{0*} \Lambda \ar[r]^{\delta \wedge} \ar[d]^{\theta'} &
\cdots \ar[r]^{\delta \wedge} &
a_{n *} \Lambda \ar[r] \ar[d]^{\theta'} &
0 \\
0 \ar[r] & 
\Lambda \ar[r]^(0.32)\theta &
i^* R^1 j_* \Lambda(1) \ar[r]^(0.6){\theta \cup} &
\cdots \ar[r]^(0.3){\theta \cup} &
i^* R^{n+1} j_* \Lambda(n+1) \ar[r] &
0 \\
}
\]
of exact sequences.

(4)
For $p \geq 0$, 
we have an exact sequence
\[
 0 \to R^p \psi \Lambda 
 \stackrel{\bar \theta}{\to} i^* R^{p+1} j_* \Lambda (1)
 \stackrel{\theta \cup}{\to} \cdots
 \stackrel{\theta \cup}{\to} i^* R^{n+1} j_* \Lambda (n-p+1)
 \to 0.
\]
\end{lem}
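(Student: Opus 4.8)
The plan is to build the four statements of Lemma~\ref{wss12} as a chain, deriving each part from the previous ones, exactly as one does in the scheme case. The whole point is that every map involved ($\theta$, $\theta_i$, $\theta'$, $\delta$, the cup products, the canonical maps $i^* R^p j_* \Lambda \to R^p \psi \Lambda$) has already been observed to be ``defined without problems in the algebraic space case''. Since these are all maps of \'etale sheaves, and since an isomorphism (resp.\ exactness of a sequence) of complexes in $D^+_c(X_k, \Lambda)$ can be checked after pulling back along an \'etale covering $U \to X$ by a scheme (by the Laszlo--Olsson formalism), the strategy throughout is: \emph{reduce to Saito's scheme statements by an \'etale-local computation}. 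This is legitimate because nearby cycles, $R j_* \Lambda$, and their truncations all commute with the \'etale pullback $U \to X$, and because strict semistability is preserved under \'etale covering (as noted after the definition of semistable).

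First I would prove (1). \'Etale-locally $X$ looks like $\Spec \cO_K[x_1,\dots,x_{n+1}]/(x_1\cdots x_r-\pi)$, where the computation of $i^* R^q j_* \Lambda$ is the standard one for a strict normal crossing special fiber: $R^1 j_* \Lambda$ is generated by the classes of the local branches, and the cup-product structure identifies $i^* R^{p+1} j_* \Lambda(p+1)$ with $a_{p*}\Lambda$. Because these identifications are given by the natural maps $\theta_i$ and are compatible with restriction to smaller \'etale opens, they glue to the asserted global isomorphism $\theta' \colon a_{p*}\Lambda \isomto i^* R^{p+1} j_* \Lambda(p+1)$ on $X_k$; here I use that each $Z_i$ and each intersection $Z_I$ is smooth (from strict semistability) so that $a_{p*}\Lambda$ is the honest pushforward from the smooth strata $X_k^{(p)}$. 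For (2), the Kummer-sequence map $\theta$ and the resulting $\bar\theta$ are defined globally, and the short exact sequence $0 \to R^p\psi\Lambda \to i^* R^{p+1} j_*\Lambda(1) \to R^{p+1}\psi\Lambda(1)\to 0$ is again a statement about the complex $R\psi\Lambda$ that holds \'etale-locally by Saito's Proposition~1.2; exactness is checked on stalks, hence survives pullback to a scheme cover.

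Given (1) and (2), parts (3) and (4) are essentially formal. For (3) I would note that the top row is the Koszul-type complex $\Lambda \xrightarrow{\delta} a_{0*}\Lambda \xrightarrow{\delta\wedge} \cdots \to a_{n*}\Lambda$ (the \v{C}ech-to-combinatorial resolution attached to the components $Z_i$ of the normal-crossing fiber), the bottom row is the analogous complex built from the $i^* R^{q} j_*\Lambda(q)$, and the vertical maps are the isomorphisms $\theta'$ from (1) together with $\theta$; commutativity of each square is the statement that $\theta'$ intertwines $\delta\wedge$ with $\theta\cup$, which one checks \'etale-locally where it reduces to the compatibility of cup product with the combinatorial boundary. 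Exactness of the top row is a purely combinatorial fact about the nerve of the cover by the $Z_i$'s, valid because each $Z_I$ is smooth (in particular irreducible), and then the bottom row is exact since it is isomorphic to the top. Finally, (4) is obtained by splicing: the exact sequences of (2) let one identify the complex in the bottom row of (3), truncated to start at $i^* R^{p+1} j_*\Lambda(1)$, with a resolution of $R^p\psi\Lambda$ via $\bar\theta$, giving the asserted exact sequence $0 \to R^p\psi\Lambda \xrightarrow{\bar\theta} i^* R^{p+1}j_*\Lambda(1) \xrightarrow{\theta\cup}\cdots\to i^* R^{n+1}j_*\Lambda(n-p+1)\to 0$.

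The main obstacle, and the only genuinely new input beyond transcribing Saito, is justifying that ``check after \'etale pullback'' is sound for all of these objects simultaneously: one must know that $\bar i^* R\bar j_*$ (hence $R\psi\Lambda$ and each $R^p\psi\Lambda$) and the $i^* R^q j_*\Lambda$ commute with pullback along an \'etale scheme cover $U\to X$, and that perversity, exactness, and the isomorphism property descend from $U$ to $X$. All of this is exactly what the Laszlo--Olsson six-functor formalism on algebraic spaces provides, so I expect the argument to go through once that base-change compatibility is cited; the smoothness and irreducibility of the strata $Z_I$ (guaranteed by \emph{strict} semistability) is what makes the combinatorial input in (3) identical to the scheme case.
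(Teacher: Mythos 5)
Your proposal is correct and follows essentially the same route as the paper: the paper's proof of (1) and (2) is precisely "the assertions are \'etale local, so reduce to the scheme case of Saito," and (3), (4) are then deduced formally from (1) and (2), exactly as you do by splicing the short exact sequences. Your additional remarks on why the \'etale-local reduction is sound (compatibility of $R\psi$ and $i^*R^qj_*$ with \'etale pullback, and strict semistability passing to \'etale covers) simply make explicit what the paper leaves implicit.
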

\begin{proof}
(1), (2) Since the assertions are \'etale local, 
we can reduce to the scheme case \cite{Saito:weightSS}.

(3), (4) These follow from (1) and (2).
\end{proof}

\begin{lem}
 [cf.\ \cite{Saito:weightSS}*{Lemma 2.5}] \label{wss25}
(1)
The object $R \psi \Lambda$ of $D_c^b(X_{\overline k}, \Lambda)$ is a $(-n)$-shifted perverse sheaf.

(2)
The canonical filtration $F'_p R \psi \Lambda = \tau_{\leq p} R \psi \Lambda$ is a filtration by sub $(-n)$-shifted perverse sheaves.
Here $\tau_{\leq \bullet}$ is the truncation.

(3)
For an integer $p \geq 0$, 
the map $\bar \theta \colon R^p \psi \Lambda \rightarrow i^* R^{p+1} j_* \Lambda(1)$
induces an isomorphism
\[
\Gr_p^{F'} R \psi \Lambda
 = R^p \psi \Lambda[-p]
 \stackrel{\bar \theta}{\rightarrow}
 [
  i^* R^{p+1} j_* \Lambda(1)
  \stackrel{\theta \cup}{\rightarrow}
  \cdots
  \stackrel{\theta \cup}{\rightarrow}
  i^* R^{n+1} j_* \Lambda(n-p+1)
 ]
\]
where $i^* R^{n+1} j_* \Lambda(n-p+1)$ is put on degree $n$.
The truncation 
\[
 [
  i^* R^{p+q+1} j_* \Lambda(q+1)
  \stackrel{\theta \cup}{\rightarrow}
  \cdots
  \stackrel{\theta \cup}{\rightarrow}
  i^* R^{n+1} j_* \Lambda(n-p+1)
 ]
\]
defines a filtration $G'^q \Gr_p^{F'} R \psi \Lambda$ of $\Gr_p^{F'} R \psi \Lambda$
by sub $(-n)$-shifted perverse sheaves.

(4)
Let $T$ be an element of the inertia group $I_K$ such that $t_l(T)$ is a generator of $\bZ_l(1)$. 
For $p \geq 0$, the map $N = T-1$ sends $F'_{p+1} R \psi \Lambda$ to $F'_p R \psi \Lambda$.
The induced map
\[
 \bar{N} \colon \Gr^{F'} _{p+1} R \psi \Lambda = R ^{p+1} \psi \Lambda [-(p+1)] \rightarrow 
 \Gr _p^{F'} R \psi \Lambda = R^p \psi \Lambda[-p]
\]
and the isomorphism in (3) make a commutative diagram
\[ \small 
\xymatrix{
R^{p+1} \psi \Lambda[-(p+1)] \ar[r]^(0.35){\bar \theta} \ar[d]^{\bar{N}} & [
0 \rightarrow i^* R^{p+2} j_* \Lambda(1) 
  \stackrel{\theta \cup}{\rightarrow}
  \cdots
  \stackrel{\theta \cup}{\rightarrow}
  i^* R^{n+1} j_* \Lambda(n-p)
] \ar[d]^{\otimes t_l(T)} \\
R^{p} \psi \Lambda[-p] \ar[r]^(0.22){\bar \theta} &  [
i^* R^{p+1} j_* \Lambda(1)
  \rightarrow i^* R^{p+2} j_* \Lambda(2) 
  \stackrel{\theta \cup}{\rightarrow}
  \cdots
  \stackrel{\theta \cup}{\rightarrow}
  i^* R^{n+1} j_* \Lambda(n-p+1)
] 
}
\]
where, in the right column, the rightmost sheaves are put on degree $n$.
\end{lem}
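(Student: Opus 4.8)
The plan is to reduce each of the four assertions to Saito's scheme case \cite{Saito:weightSS}*{Lemma 2.5} by exploiting the \'etale-local nature of all the objects involved. Fix an \'etale covering $u \colon U \to X$ with $U$ a scheme; as noted above, $U$ is then a strictly semistable scheme over $\cO_K$. The decisive point is that every ingredient appearing in the statement commutes with pullback along $u$: the perverse t-structure does so by its very definition on algebraic spaces (perversity is tested after \'etale pullback to a scheme, \cites{Laszlo--Olsson:perverse}); the truncation functors $\tau_{\leq p}$, being defined from the t-structure, do as well; and the nearby cycles functor $R\psi = \bar i^* R\bar j_*$ satisfies $R\psi_U\Lambda \cong u^* R\psi_X\Lambda$ by the smooth (here \'etale) base change compatibility of nearby cycles, which holds in the six-functor formalism of Laszlo--Olsson as it does for schemes. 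Finally, the sheaves $i^* R^q j_* \Lambda$ and $a_{p*}\Lambda$ and the maps $\theta,\bar\theta$ of Lemma \ref{wss12} are all compatible with $u^*$.

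With this in hand, (1) is immediate: $R\psi\Lambda$ is a $(-n)$-shifted perverse sheaf if and only if its pullback $u^* R\psi\Lambda \cong R\psi_U\Lambda$ is, and the latter is Saito's assertion for the strictly semistable scheme $U$. (Equivalently, one may invoke the perverse t-exactness of the nearby cycles functor, valid in the Laszlo--Olsson formalism as for schemes.) Assertion (2) follows in the same manner: since $F'_p = \tau_{\leq p}$ commutes with $u^*$, the statements that each $F'_p R\psi\Lambda$ is a sub-$(-n)$-shifted-perverse sheaf and that the inclusions are monomorphisms in the perverse category may be checked after pulling back to $U$, where they reduce to Saito's result.

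For (3) and (4) the underlying algebraic identities are already available in the algebraic space setting from Lemma \ref{wss12}. Concretely, the exact sequence of Lemma \ref{wss12}(4) realizes $R^p\psi\Lambda$ as quasi-isomorphic to the displayed complex built from the $i^* R^{q+1} j_* \Lambda$, and this quasi-isomorphism is exactly the map $\bar\theta$ of (3); the filtration $G'^q$ is the stupid truncation of that complex and is therefore defined formally. That the resulting graded pieces are sub-$(-n)$-shifted-perverse sheaves reduces once more, via $u^*$, to Saito's case, using that each $i^* R^{q+1} j_* \Lambda$ is, up to a Tate twist, of the form $a_{q*}\Lambda$ (Lemma \ref{wss12}(1)) and hence perverse after the shift by $[n-q]$, since $X_k^{(q)}$ is smooth of dimension $n-q$ and $a_q$ is finite. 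For (4), the operator $N = T-1$ acts through the global continuous $G_K$-action on $R\psi\Lambda$; the claims that $N$ carries $F'_{p+1}$ into $F'_p$ and that the indicated square commutes are again compatible with $u^*$ and become Saito's computation after pullback, the vertical map $\otimes\, t_l(T)$ on the right arising from the comparison of $\theta$ with the monodromy via the Kummer-sequence boundary.

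The main obstacle is not any individual identity --- each becomes formal or reduces to \cite{Saito:weightSS} --- but the verification that the Laszlo--Olsson perverse-sheaf and six-functor formalism for algebraic spaces is genuinely compatible with the nearby cycles functor and with pullback along $u$, so that ``check after $u^*$'' is legitimate uniformly across all four parts. Once these compatibilities are recorded, the lemma introduces no geometry beyond Saito's and follows by transport of structure.
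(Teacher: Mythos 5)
Your proposal is correct and follows essentially the same route as the paper: the paper likewise observes that the building blocks $a_{p*}\Lambda[-p]$ are $(-n)$-shifted perverse sheaves (perversity on an algebraic space being tested after \'etale pullback to a scheme), runs Saito's argument via Lemma \ref{wss12} for (1)--(3), and reduces the commutativity in (4) to the scheme case by passing to an \'etale covering. Your write-up is merely more explicit about the compatibilities (nearby cycles and truncations commuting with \'etale pullback) that the paper leaves implicit.
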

\begin{proof}

(1)--(3)
We note that a smooth sheaf on a pure $d$-dimensional non-singular algebraic space over a field is
a $d$-shifted perverse sheaf (as in the scheme case).
Hence $a_{p *} \Lambda[-p]$ are $(-n)$-shifted perverse sheaves.
The rest of the proof (using Lemma \ref{wss12}) is identical to the scheme case.

(4)
It suffices to show the commutativity after taking an \'etale covering.
Hence we can reduce to the scheme case.
\end{proof}

\begin{lem}
 [cf.\ \cite{Saito:weightSS}*{Corollary 2.6 (1),(2),(3),(5)}] \label{wss26}
Let $T$ and $N$ be as above.

(1)
We have $N^{n+1} = 0$.
The kernel filtration $F_p$ defined by $F_p R \psi \Lambda = \Ker (N^{p+1} \colon R \psi \Lambda \rightarrow R \psi \Lambda)$
is equal to $F'_p$.

(2)
The image filtration $G^q$ defined by 
$G^q \Gr^F_p R \psi \Lambda = \Image (\bar N^q \colon \allowbreak \Gr^F_{p+q} R \psi \Lambda \rightarrow \Gr^F_p R \psi \Lambda)$
is equal to $G'^q$.
The filtration $G^q \Gr^F_p R \psi \Lambda$ is induced by the image filtration
$G^q R \psi \Lambda = \Image (N^q \colon R \psi \Lambda \to R \psi \Lambda)$.

(3)
The isomorphism 
$\theta' \colon a_{(p+q)*} \Lambda \rightarrow i^* R^{p+q+1} j_* \Lambda(p+q+1)$
in Lemma \ref{wss12} (1) induces an isomorphism
\[
 a_{(p+q)*} \Lambda(-p)[-(p+q)]
 \rightarrow
 \Gr_{G'}^q \Gr_p^{F'} R \psi \Lambda
\]
of $(-n)$-shifted perverse sheaves.

(4)
Let $p,q \geq 0$.
The diagram
\[
\xymatrix{
 a_{(p+q)*} \Lambda(-p)[-(p+q)]
 \ar[r] \ar[d]^{1 \otimes t_l(T)} 
 &
 \Gr_{G'}^q \Gr_p^{F'} R \psi \Lambda 
 \ar[d]^{\bar N}
 \\
 a_{(p+q)*} \Lambda(-(p-1))[-(p+q)]
 \ar[r] 
 &
 \Gr_{G'}^{q+1} \Gr_{p-1}^{F'} R \psi \Lambda 
}
\]
is commutative, 
where the horizontal maps are the isomorphisms in (3).
\end{lem}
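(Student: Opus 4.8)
The plan is to deduce all four assertions from Saito's scheme-case results \cite{Saito:weightSS}*{Corollary 2.6~(1),(2),(3),(5)} by reduction to an \'etale covering, in exactly the spirit of the proofs of Lemmas \ref{wss12} and \ref{wss25}. The point is that every object and morphism involved --- the nearby cycles complex $R\psi\Lambda$, the endomorphism $N = T-1$, the canonical filtration $F'_p = \tau_{\leq p}R\psi\Lambda$, and the maps $\theta'$ --- is defined \'etale-locally and commutes with pullback along an \'etale covering $\epsilon \colon U \to X_{\overline k}$ by a scheme. Moreover, in the Laszlo--Olsson formalism invoked at the start of the subsection, such a pullback is t-exact for the perverse t-structure (\'etale morphisms being smooth of relative dimension $0$) and faithful. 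Hence $\epsilon^*$ commutes with the formation of kernels, images, and truncations, and it reflects both isomorphisms and equalities of subobjects in the abelian category of $(-n)$-shifted perverse sheaves. This last observation is what legitimizes the reduction for the \emph{filtration} statements, not only for the ``is an isomorphism'' and ``the diagram commutes'' statements.

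For (1), the identity $N^{n+1}=0$ is the vanishing of a morphism of complexes, which is \'etale-local; it therefore reduces to \cite{Saito:weightSS}*{Corollary 2.6~(1)}. One also sees it directly from Lemma \ref{wss25}(4): that lemma shows $N(F'_{p+1}) \subseteq F'_p$, and since monodromy is trivial on $R^0\psi\Lambda = \Gr^{F'}_0$ we get $N(F'_p) \subseteq F'_{p-1}$ for all $p$, so that $N^{n+1}$ carries $F'_n = R\psi\Lambda$ into $F'_{-1} = 0$. For the equality of the kernel filtration $F_p = \Ker(N^{p+1})$ with the canonical filtration $F'_p$, I would pull back by $\epsilon$: since $\epsilon^*$ is exact it carries $\Ker(N^{p+1})$ to $\Ker((\epsilon^* N)^{p+1})$ and $\tau_{\leq p}$ to $\tau_{\leq p}$, with $\epsilon^* N$ the corresponding operator on $U$. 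As $\epsilon^*$ is faithful and exact it satisfies $\epsilon^*(A\cap B) = \epsilon^*A \cap \epsilon^*B$ and reflects equalities of subobjects, so the desired equality of subobjects of $R\psi\Lambda$ holds once it holds on $U$, which is Saito's result.

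Part (2) is handled identically: the image filtration $G^q R\psi\Lambda = \Image(N^q)$ and its associated graded pieces commute with $\epsilon^*$ by exactness, so the coincidence $G^q\Gr^F_p R\psi\Lambda = G'^q\Gr^F_p R\psi\Lambda$ and the claim that $G^q\Gr^F_p R\psi\Lambda$ is induced from the global image filtration both reduce to \cite{Saito:weightSS}*{Corollary 2.6~(2)}. For (3), the map induced by the isomorphism $\theta'$ of Lemma \ref{wss12}(1) is an isomorphism of $(-n)$-shifted perverse sheaves because this can be checked after the faithful exact pullback $\epsilon^*$, where it becomes Saito's (3). Finally, (4) is the commutativity of a diagram of morphisms of perverse sheaves; as in the proof of Lemma \ref{wss25}(4) it suffices to verify it after pulling back to $U$, reducing it to \cite{Saito:weightSS}*{Corollary 2.6~(5)}.

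The step I expect to require the most care is the claim, underlying (1) and (2), that the kernel and image filtrations genuinely commute with \'etale pullback and that equality of two such filtrations is \'etale-local. This rests entirely on $\epsilon^*$ being exact and faithful on perverse sheaves, so that it preserves kernels and images of $N$ and reflects isomorphisms of subobjects. All the remaining assertions are visibly \'etale-local ``is an isomorphism'' or ``diagram commutes'' statements. Granting the Laszlo--Olsson perverse-sheaf formalism for algebraic spaces, there is no further geometric obstruction beyond Saito's scheme-case computations.
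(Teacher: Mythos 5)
Your argument is correct, but it is not quite the route the paper takes, so a short comparison is in order. For parts (1)--(3) the paper does \emph{not} reduce to the scheme case by pullback; it re-runs Saito's abstract argument inside the abelian category of $(-n)$-shifted perverse sheaves on $X_{\overline k}$ itself, verifying the conditions that characterize the kernel and image filtrations of the nilpotent operator $N$ directly from the already-established algebraic-space versions of Lemmas \ref{wss12} and \ref{wss25} (for (3) this is essentially immediate: $\Gr_{G'}^q\Gr_p^{F'}R\psi\Lambda$ is identified with $i^*R^{p+q+1}j_*\Lambda(q+1)[-(p+q)]$ by Lemma \ref{wss25}(3), and $\theta'$ of Lemma \ref{wss12}(1) does the rest); only (4) is deduced from Lemma \ref{wss25}(4), whose proof was itself an \'etale reduction. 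Your approach instead pushes the \'etale-descent reduction all the way through, resting on the t-exactness and conservativity of pullback along a surjective \'etale covering $U \to X$ by a scheme in the Laszlo--Olsson formalism, so that kernels, images, truncations, and equalities of subobjects are all preserved and reflected. You correctly identify this as the delicate point and justify it adequately; you should also note explicitly that $U$ is again a strictly semistable \emph{scheme} over $\cO_K$ (as remarked after the definition of semistability) and that Saito's Corollary 2.6 does not require properness, so it really does apply to $U$, and that $\epsilon^*R\psi_X\Lambda \cong R\psi_U\Lambda$ compatibly with $N$, $\theta'$, and $a_{p*}\Lambda$ (the last using that distinct components of $U_k$ over the same $Z_i$ are disjoint, since $Z_i$ is smooth). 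What your route buys is uniformity --- every assertion becomes a literal instance of Saito's scheme-case statement --- at the cost of leaning harder on the descent formalism; the paper's route avoids invoking conservativity for the filtration equalities by redoing the (purely categorical) characterization argument with the algebraic-space inputs already in hand. Your alternative direct verification of $N^{n+1}=0$ via $N(F'_0)=0$ and Lemma \ref{wss25}(4) is also correct.
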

\begin{proof}
(1), (2)
Same to the scheme case: 
by using Lemmas \ref{wss12} and \ref{wss25},
we check the conditions which characterize the kernel and image filtrations.

(3) Same to the scheme case.

(4)
Clear from Lemma \ref{wss25} (4).
\end{proof}

\begin{lem}
 [cf.\ \cite{Saito:weightSS}*{Proposition 2.7}] \label{wss27}
Let $X$ be a strictly semistable algebraic space purely of relative dimension $n$ over $\Spec \cO_K$.
Let $T$ and $N$ be as above. 
Let $M_\bullet$ be the monodromy filtration on $R \psi \Lambda$ defined by the (nilpotent) operator $N$.
Then the isomorphism in Lemma \ref{wss26} (3) induces an isomorphism
\[
\bigoplus_{p-q=r} a_{(p+q)*} \Lambda(-p)[-(p+q)] \rightarrow \Gr_r^M R \psi \Lambda
\]
compatible with the action of $G_k$.
The filtration $M_\bullet$ and the canonical isomorphism are independent of the choice of $T$.

Here the increasing filtration $M_\bullet$ is defined by 
$M_r A = \sum_{p-q=r} F_p A \cap G^q A$.
\end{lem}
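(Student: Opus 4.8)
The plan is to deduce the statement formally from Lemma~\ref{wss26} together with the standard linear algebra of monodromy filtrations, worked out inside the abelian category $\cP$ of $(-n)$-shifted perverse sheaves on $X_{\overline k}$. By Lemma~\ref{wss25}(1) the nearby cycle complex $R\psi\Lambda$ is an object of $\cP$, and by Lemma~\ref{wss26}(1) the endomorphism $N = T-1$ is nilpotent with $N^{n+1}=0$; I may therefore treat $(R\psi\Lambda, N)$ as a nilpotent endomorphism of an object of an abelian category, so that kernels, images and graded pieces all make sense. The general input I would invoke is that such an $N$ admits a unique increasing filtration $M_\bullet$ with $N M_r \subseteq M_{r-2}$ and $N^r \colon \Gr_r^M \isomto \Gr_{-r}^M$, and that this monodromy filtration is recovered from the kernel filtration $F_p = \Ker(N^{p+1})$ and the image filtration $G^q = \Image(N^q)$ by the very formula $M_r = \sum_{p-q=r} F_p \cap G^q$ of the statement, with a canonical decomposition $\Gr_r^M \cong \bigoplus_{p-q=r} \Gr_G^q \Gr_F^p$.

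First I would check that this formula indeed computes the monodromy filtration in our case; this amounts to the symmetry isomorphisms $N^r \colon \Gr_r^M \isomto \Gr_{-r}^M$ together with the strictness underlying the direct-sum decomposition, and both follow from the explicit action of $\bar N^q$ on the graded pieces recorded in Lemma~\ref{wss26}(3),(4). By Lemma~\ref{wss26}(1),(2) the kernel and image filtrations of $N$ on $R\psi\Lambda$ are exactly $F'$ and $G'$, so $M_r = \sum_{p-q=r} F'_p \cap {G'}^q$ and $\Gr_r^M R\psi\Lambda \cong \bigoplus_{p-q=r} \Gr_{G'}^q \Gr_p^{F'} R\psi\Lambda$. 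Substituting the isomorphism of Lemma~\ref{wss26}(3) identifies each summand with $a_{(p+q)*}\Lambda(-p)[-(p+q)]$, which is the asserted isomorphism.

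For $G_k$-compatibility I would note that $F'$, $G'$ and all the isomorphisms of Lemma~\ref{wss26} are constructed equivariantly, while the inertia $I_K$ acts on $R\psi\Lambda$ unipotently through $N$; hence the action on the graded pieces factors through $G_k$ and agrees with the natural one on the strata sheaves $a_{(p+q)*}\Lambda(-p)$. For independence of $T$: another admissible choice $T'$ multiplies $t_l(T)$ by a unit of $\bZ_l^\times$, which changes neither the kernels nor the images of the powers of $N$, hence leaves $F'$, $G'$ and $M_\bullet$ unchanged, while the canonical isomorphism comes from $\theta'$ through Lemma~\ref{wss26}(3) and so does not involve $T$ at all. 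The only point requiring care is formal rather than geometric: the entire monodromy-filtration formalism must be applied inside the abelian heart $\cP$ of the Laszlo--Olsson perverse t-structure on $D^b_c(X_{\overline k}, \Lambda)$ and not in the ambient derived category. This is precisely what Lemmas~\ref{wss12}--\ref{wss26} guarantee, each having reduced its geometric content to the scheme case \'etale-locally; once this is granted, the remaining argument is purely formal and runs verbatim as in the scheme case.
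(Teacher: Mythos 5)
Your proof is correct and follows essentially the same route as the paper: the paper simply combines the canonical isomorphism $\bigoplus_{p-q=r}\Gr_G^q\Gr_F^p R\psi\Lambda \to \Gr_r^M R\psi\Lambda$ (Saito's Corollary 2.4, the formal monodromy-filtration lemma you spell out) with the identifications of Lemma \ref{wss26}(1)--(3). You merely unpack the formal input and the $G_k$-equivariance and $T$-independence in more detail, which the paper leaves implicit.
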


\begin{proof}
We combine the canonical isomorphism 
\[
\bigoplus_{p-q=r} \Gr^q_G \Gr^F_p R \psi \Lambda \rightarrow \Gr_r^M R \psi \Lambda
\]
 (\cite{Saito:weightSS}*{Corollary 2.4})
 with the isomorphisms in Lemma \ref{wss26} (3).
\end{proof}

\begin{proof}[Proof of Proposition \ref{prop:weightss}]
(1) The filtration $M_\bullet$ induces (M. Saito \cite{Saito:modulesdeHodge}*{Lemme 5.2.18}) a spectral sequence
\[
 E_1^{p,q} = H^{p+q}(X_{\overline k}, \Gr^M_{-p} R \psi \Lambda)
  \Rightarrow 
 H^{p+q}(X_{\overline k}, R \psi \Lambda).
\]
The canonical isomorphism of Lemma \ref{wss27}
shows that the left-hand side is isomorphic to 
the left-hand side of the sequence in Proposition \ref{prop:weightss}.
Therefore it remains to show 
$H^{p+q}(X_{\overline k}, R \psi \Lambda) \cong H^{p+q}(X_{\overline K}, \Lambda)$,
which is immediate if we can apply the proper base change theorem.
That theorem for algebraic spaces (more generally for stacks) is proved by Liu--Zheng \cite{Liu--Zheng}*{Theorem 0.1.1}.

(Alternatively, if $\charac K = 0$, 
we can use Artin's proof \cite{Artin:representabilite}*{Chapitre VII} of the theorem for algebraic spaces:
although his proof is stated only for those of finite type over a base which is an algebraically closed field,
it is valid when the base is an excellent Dedekind scheme (e.g. discrete valuation rings of characteristic zero).)

The compatibility with $N$ follows from Lemma \ref{wss26} (4).

(2)
First we review the following proof of Nakayama in the case $X$ is a scheme.
Equip $\Spec O_K$ with a log structure by the homomorphism
$\bN \to \cO_K \colon 1 \mapsto \pi$,
(we fix a chart $\Spec O_K \to \Spec \bZ[\bN]$), 
and $\Spec k$ with the restriction.
The special fiber $X_k$ is naturally a semistable log scheme over $\Spec k$.
Let $(X_k)_{1/l^{n}}$ be the log scheme  $X_k \times_{\Spec \bZ[\bN]} \Spec \bZ[l^{-n} \bN]$.
Let $(X_k)_{\mathrm{t} l}$ be the $2$-limit $\varprojlim_n ((X_k)_{1/l^{n}})^{\log}_\et$ of log \'etale topoi
and $\pi \colon (X_k)_{\mathrm{t} l} \to (X_k)^{\log}_\et$ be the projection.
Denote by $\varepsilon \colon (X_k)^{\log}_\et \to (X_k)_\et$ the forgetting-log morphism. 
Let $(X_k)^L$, $L$ a finite extension of $K$, 
be the log scheme obtained from $X_k$ by the base change $\Spec \cO_L \to \Spec \cO_K$.
Let $(X_k)^\tame$ be the $2$-limit $\varprojlim_L ((X_k)^L)^{\log}_\et$,
where $L$ runs over the set of tame extensions $L$ of $K$,
and $\pi^\tame \colon (X_k)^{\tame} \to (X_k)^{\log}_\et$ be the projection.
We have a natural morphism $(X_k)^\tame \to (X_k)_{\mathrm{t} l}$.
Then 
we have isomorphisms
\begin{equation} \label{eq:nearbycycles} \tag{\textasteriskcentered}
 R \varepsilon_* \pi_* \Lambda 
 \isomto R \varepsilon_* \pi^\tame_* \Lambda
 = R \varepsilon'_* \Lambda
 \isomto R \varepsilon'_* R \Psi \Lambda
 = R \psi R \varepsilon_* \Lambda 
 = R \psi \Lambda.
\end{equation} 
Here
the first morphism is induced from the adjoint property and is proved to be isomorphism in \cite{Nakayama:degeneration}*{proof of Proposition 1.9},
the second isomorphism follows from the equality 
$\varepsilon' = \varepsilon \circ \pi^\tame$ of functors 
(note the identification $X_{\overline k (\log)} \isomto X_k^\tame$ in \cite{Nakayama:nearby}*{Proposition 3.1.3}),
the third and the fourth isomorphism are respectively Theorem 3.2 and Section 3.1.6 of \cite{Nakayama:nearby},
and the fifth isomorphism (where $\varepsilon$ is the forgetting-log morphism on the generic fiber)
is trivial since the generic fiber has trivial log structure.
The isomorphisms are known to be compatible with the actions of $I_K$.
Hence we have an isomorphism between the corresponding spectral sequences.
The rightmost side gives our spectral sequence 
and the log spectral sequence obtained from the leftmost side degenerates at $E_2$ (\cite{Nakayama:degeneration}*{Theorem 2.1}).

The $E_2$-degeneration of the spectral sequence associated with the complex of log nearby cycles on the special fiber
is true in the algebraic space case
(since we assumed that $X_k$ is a scheme).
Therefore it suffices to show the isomorphism \eqref{eq:nearbycycles}
when $X$ is an algebraic space.
Take an \'etale covering $Y \to X$ by a scheme.
We have canonical isomorphisms \eqref{eq:nearbycycles} on $Y_k$, on $Y_k \times_{X_k} Y_k$, and on $Y_k \times_{X_k} Y_k \times_{X_k} Y_k$,
which are compatible with pullbacks.
Then the isomorphism on $Y_k$ descends to an isomorphism of perverse sheaves on $X_k$, 
since giving a perverse sheaf on $X_k$ is equivalent to 
giving a perverse sheaf on $Y_k$ equipped with an isomorphism between its pullbacks to $Y_k \times_{X_k} Y_k$ satisfying a certain cocycle condition.

(3)
The proof of \cite{Rapoport--Zink:monodromie}*{Satz 2.13} applies
once we have the same description of the morphism $N$ (given in (1)) and the boundary maps 
in our case.
So it suffices to show the following lemma.

(If we restrict to the case when $X_k$ is liftable to a semistable scheme over a discrete valuation ring,
which is enough for our application on K3 surfaces,
then we can reduce directly to the scheme version (\cite{Saito:weightSS}*{Proposition 2.10}) of the next lemma
since (from the above isomorphism) the spectral sequence depends only on the log special fiber.)
\end{proof}

\begin{lem}[cf.\ \cite{Saito:weightSS}*{Proposition 2.10}] \label{wss210}
The boundary map 
\[
 d_1^{p,q} \colon 
 E_1^{p,q} = \bigoplus_{i \geq \max\{0,-p\}} H^{q-2i} (X_{\overline k}^{(p+2i)}, \Lambda(-i))
 \to E_1^{p+1,q}
\]
of the weight spectral sequence is given by $\sum_{i \geq \max\{0,-p\}} \delta_{(p+2i)*} + \delta_{p+2i}^*$.
\end{lem}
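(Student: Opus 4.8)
The plan is to realize the differential $d_1^{p,q}$ as the map induced on cohomology by an actual \emph{morphism in the derived category} $D^b_c(X_{\overline k}, \Lambda)$, and then to identify that morphism with the stated sum by following Saito's derivation of \cite{Saito:weightSS}*{Proposition 2.10}, reducing each comparison to the scheme case. Recall the standard description of $d_1$ for the spectral sequence attached to the monodromy filtration $M_\bullet$ on $R\psi\Lambda$ of Lemma \ref{wss27}: for each $p$ the gradeds of the sub-object $M_{-p}R\psi\Lambda/M_{-p-2}R\psi\Lambda$ fit into a distinguished triangle
\[
 \Gr^M_{-p-1} R\psi\Lambda \to M_{-p}R\psi\Lambda / M_{-p-2}R\psi\Lambda \to \Gr^M_{-p}R\psi\Lambda \stackrel{\partial}{\to} \Gr^M_{-p-1}R\psi\Lambda[1],
\]
and $d_1^{p,q}$ is exactly $H^{p+q}(X_{\overline k}, -)$ applied to the connecting morphism $\partial$. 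Under the canonical isomorphism of Lemma \ref{wss27}, the source and target of $\partial$ are identified with direct sums of the shifted Tate-twisted sheaves $a_{(m)*}\Lambda$, whose cohomology groups are the terms $H^{q-2i}(X_{\overline k}^{(p+2i)}, \Lambda(-i))$ occurring in $E_1^{p,q}$ and $E_1^{p+1,q}$.

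I would then observe that Saito's identification of $\partial$ with $\sum_i (\delta_{(p+2i)*} + \delta_{p+2i}^*)$ is a formal consequence of the explicit description of the graded pieces $\Gr^M_\bullet R\psi\Lambda$ (our Lemma \ref{wss27}) together with the compatibilities of the structure maps $\theta'$, $\bar\theta$, $\theta\cup$, and $\bar N$ recorded in Lemmas \ref{wss12} and \ref{wss26} — all of which we have already established for algebraic spaces. Wherever the argument requires verifying an identity between two morphisms of the sheaves $a_{(m)*}\Lambda$ (for instance, that a given component of the connecting homomorphism is a Gysin or a restriction map), it suffices to check the identity after pullback along an étale covering $Y \to X$ by a scheme: such morphisms between objects of $D^b_c(X_{\overline k}, \Lambda)$ descend in the Laszlo--Olsson formalism \cites{Laszlo--Olsson:derived1,Laszlo--Olsson:derived2,Laszlo--Olsson:perverse}, as we already used in the proof of Lemma \ref{wss25}(4). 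The pullbacks of the sheaves $a_{(m)*}\Lambda$, of the Gysin and restriction maps, and of the identifications of Lemma \ref{wss27} all agree with the corresponding data for the semistable scheme $Y_{\overline k}$, by proper base change for the finite maps $a_m \colon X_{\overline k}^{(m)} \to X_{\overline k}$; so each such identity reduces to Saito's computation on $Y_{\overline k}$.

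The step I expect to be the main obstacle is the boundary map $\partial$ itself, since it is not a morphism within the heart of the perverse $t$-structure but a class of $\mathrm{Ext}^1$-type, for which naive injectivity of étale pullback on $\mathrm{Hom}$-groups is not automatic. I would therefore not treat $\partial$ as an abstract class but via Saito's explicit representation of the connecting homomorphism by a zig-zag of morphisms of the sheaves $a_{(m)*}\Lambda$ and the truncation complexes of Lemma \ref{wss12}; this representation is built étale-locally, so its comparison with $\delta_* + \delta^*$ takes place at the level of morphisms of complexes and descends from the cover. Decomposing $\partial$ according to Lemma \ref{wss27}, the component preserving the summand index $i$ (raising the stratum index $m=p+2i$ by one, hence $p$ by one) must be the alternating sum of restriction maps $\delta_{p+2i}^*$, while the component sending $i \mapsto i-1$ (lowering $m$ by one, with a shift of cohomological degree by two and a Tate twist by one) must be the Gysin map $\delta_{(p+2i)*}$; the remaining bookkeeping is to confirm that these are the only components, that they carry the correct signs and twists, and that the contributions from neighboring values of $i$ assemble into the single displayed differential landing in $E_1^{p+1,q}$. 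Each of these is forced exactly as in Saito's argument, the only genuinely algebraic-space-specific ingredient being the étale descent of the explicit morphisms furnished by Lemmas \ref{wss26}--\ref{wss27}, whose pullbacks to $Y_{\overline k}$ reproduce Saito's.
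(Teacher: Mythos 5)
Your proposal is correct and follows essentially the same route as the paper: realize $d_1^{p,q}$ as the connecting map of the extension $0 \to \Gr^M_{-p-1} \to M_{[-p-1,-p]} \to \Gr^M_{-p} \to 0$, identify the graded pieces via Lemma \ref{wss27}, split the connecting map into a Gysin component (lowering $i$) and a restriction component (preserving $i$), and check each identification after pullback to an \'etale scheme cover, where it becomes Saito's computation. The only organizational difference is that the paper packages the decomposition of the connecting map as the total differential of the double complex coming from the boundary maps of the $F$- and $G$-filtration extensions (with the Gysin case reduced to $j=0$ via Lemma \ref{wss26}(4)), which makes precise the bookkeeping you describe informally.
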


Here $\delta_{\bullet, *}$ and $\delta_{\bullet}^*$ are defined as follows.
Recall that $Z_H$ is the subscheme $Z_{h_0} \cap \cdots \cap Z_{h_p}$ for each subset $H = \{ h_0, \ldots, h_p \}$ of the index set $\{ 1, \ldots, m \}$. 
For each pair of subsets $H' \subset H$ with $\lvert H' \rvert = \lvert H \rvert - 1$,
let $i_{H'H} \colon Z_H \to Z_{H'}$ be the closed immersion and,
writing $H' = \{ \ldots, \hat h_g, \ldots \} \subset H = \{ h_0, \ldots, h_p \}$ with $h_0 < \cdots < h_p$,
let $\varepsilon_{H'H} = (-1)^g$.
We define 
\[
 \delta_{p}^*
  = \sum_{H,H'} \varepsilon_{HH'} i_{HH'}^* 
  \colon H^q(X_{\overline k}^{(p)}, \Lambda) \to H^q(X_{\overline k}^{(p+1)}, \Lambda)
\]
where the sum runs pairs $H \subset H'$ with $\lvert H \rvert = p$ and $\lvert H' \rvert = p+1$, 
and 
\[
 \delta_{p*}
  = \sum_{H,H'} \varepsilon_{H'H} i_{H'H,*} 
  \colon H^q(X_{\overline k}^{(p)}, \Lambda) \to H^{q+2}(X_{\overline k}^{(p-1)}, \Lambda)
\]
where the sum runs pairs $H' \subset H$ with $\lvert H \rvert = p$ and $\lvert H' \rvert = p-1$.

\begin{proof}[Proof of Lemma \ref{wss210}]
By the definition of the spectral sequence, the map $ d_1^{p,q} \colon E_1^{p,q} \to E_1^{p+1,q}$ is
the boundary map
\[
 H^{p+q}(X_{\overline k}, \Gr^M_{-p} R \psi \Lambda) \to
 H^{p+q+1}(X_{\overline k}, \Gr^M_{-p-1} R \psi \Lambda)
\]
of the short exact sequence 
\[
 0 \to \Gr^M_{-p-1} R \psi \Lambda
 \to M_{[-p-1,-p]} R \psi \Lambda
 \to \Gr^M_{-p} R \psi \Lambda
 \to 0
\]
of $(-n)$-shifted perverse sheaves, 
where $M_{[a,b]}$ is the subquotient $M_b / M_{a-1}$
(of which $\Gr^M_a = M_{[a,a]}$ is a special case).
For an integer $q$, 
let $(K_i^j, {d'}_i^j, {d''}_i^j)$, where $K_i^j = H^{q-i+j}(X_{\overline k}, \Gr_i^F \Gr_G^j R \psi \Lambda)$, 
be the double complex where the (anti-commuting) differentials ${d'}_i^j$ and ${d''}_i^j$ are respectively the boundary maps of the short exact sequences
\[
 0 \to \Gr_{i-1}^F \Gr_G^j R \psi \Lambda
 \to F_{[i-1,i]} \Gr_G^j R \psi \Lambda
 \to \Gr_i^F \Gr_G^j R \psi \Lambda \to 0
\]
and
\[
0 \to \Gr_G^{j+1} \Gr_i^F R \psi \Lambda
 \to G^{[j,j+1]} \Gr_i^F R \psi \Lambda
 \to \Gr_G^j \Gr_i^F R \psi \Lambda \to 0.
\]
Then the complex $E_1^{\bullet,q}$ is the simple complex associated to the double complex 
 $(K_i^j, {d'}_i^j, {d''}_i^j)$.

Hence it suffices to show that the diagrams 
\begin{equation} \label{eq:commu-gysin}
\xymatrix{
 H^{q-2i}    (X_{\overline k}^{(i+j)},   \Lambda(-i))   \ar[d] \ar[r]^(0.45){\delta_{(i+j)*}} &
 H^{q-2i+2}  (X_{\overline k}^{(i+j-1)}, \Lambda(-i+1)) \ar[d] \\
 H^{q-i+j}   (X_{\overline k}, \Gr_i^F     \Gr_G^j R \psi \Lambda) \ar[r]^{d'} &
 H^{q-i+j+1} (X_{\overline k}, \Gr_{i-1}^F \Gr_G^j R \psi \Lambda)
} 
\end{equation}
and 
\begin{equation} \label{eq:commu-pullback}
\xymatrix{
 H^{q-2i}    (X_{\overline k}^{(i+j)},   \Lambda(-i))   \ar[d] \ar[r]^{\delta_{i+j}^{*}} &
 H^{q-2i}    (X_{\overline k}^{(i+j+1)}, \Lambda(-i)) \ar[d] \\
 H^{q-i+j}   (X_{\overline k}, \Gr_G^j     \Gr_i^F R \psi \Lambda) \ar[r]^{d''} &
 H^{q-i+j+1} (X_{\overline k}, \Gr_G^{j+1} \Gr_i^F R \psi \Lambda)
}
\end{equation}
commute, 
where the vertical morphisms are induced from the isomorphisms in Lemma \ref{wss26} (3).

First we consider \eqref{eq:commu-gysin}.
Since the commutativity for $(i,j)$ is equivalent to that of $(i-1,j+1)$ by Lemma \ref{wss26} (4),
we can reduce to the case $j=0$. 
In that case, it suffices to show the commutativity of the diagram
\[
\xymatrix{
0 \ar[d] & 0 \ar[d] \\
a_{(i-1)*} R i^{(i-1)!} \Lambda(1)[i+1] \ar[r] \ar[d] &
  \Gr_{i-1}^F \Gr_G^0 R \psi \Lambda \ar[d] \\
R \sHom ([\Lambda_{X_{\overline k}^{(i-1)}} \stackrel{\delta \wedge}{\to} \Lambda_{X_{\overline k}^{(i)}}], \Lambda) (1)[2] \ar[r] \ar[d] &
  F_{[i-1,i]} \Gr_G^0 R \psi \Lambda \ar[d] \\
a_{i*} R i^{(i)!} \Lambda(1)[i+2] \ar[r] \ar[d] &
  \Gr_i^F \Gr_G^0 R \psi \Lambda  \ar[d] \\
0 & 0.
}
\]
We easily reduce to the scheme case, 
which is shown in \cite{Saito:weightSS}*{Proposition 2.10}.
(It would be also possible to extend his proof directly to the algebraic space case.)

The commutativity of \eqref{eq:commu-pullback} follows
from the commutative diagram
\[
\xymatrix{
0 \ar[d] & 0 \ar[d] \\
a_{(i+j+1)*} \Lambda(-i)[-(i+j+1)] \ar[r] \ar[d] &
  \Gr_G^{j+1} \Gr_i^F R \psi \Lambda \ar[d] \\
[a_{(i+j)*} \Lambda(-i) \stackrel{\delta \wedge}{\to} a_{(i+j+1)*} \Lambda(-i) ] \ar[r] \ar[d] &
  G^{[j,j+1]} \Gr_i^F R \psi \Lambda \ar[d] \\
a_{(i+j)*} \Lambda(-i)[-(i+j)] \ar[r] \ar[d] &
  \Gr_G^j \Gr_i^F R \psi \Lambda  \ar[d] \\
0 & 0
}
\]
which follows from Lemmas \ref{wss25} (3) and \ref{wss12} (3).
\end{proof}

\section{Proof of the main theorem} \label{sec:proof}

In this section we use the following notation:
we denote 
objects (schemes, line bundles on schemes, ...) over $\cO_K$ by calligraphic letters like $\cX$,
and objects over fields by normal letters like $X$.
For example, the generic (resp.\ special) fiber of an object $\cY$ over $\cO_K$ is denoted by $Y_K$ (resp.\ $Y_k$).

We follow a method of Maulik \cite{Maulik:supersingular}*{Section 4} of studying reduction of K3 surfaces.
Let $X$ and $L$ be as in the statement of the theorem (we do not assume at this moment that $H^2$ is unramified).
Put $L^2 = 2d$ (this value is always even).

First we construct (after field extension) a projective strictly semistable scheme $\cX'$ whose generic fiber is birational to $X$.
The assumption on $L$ is used only in this step. 
(Hence, as we mentioned in the introduction, this assumption can be dropped if we admit the semistable reduction conjecture.)

\begin{lem}
Let $X$ be a K3 surface of characteristic $\neq 2$ and $L$ an ample line bundle on $X$.
Then one of the following holds:
 (a) $L$ is very ample, 
 (b) $L = \cO_X(kB)$ with $k = 1$ or $2$ and $B$ is an irreducible curve of arithmetic genus $2$, or
 (c) $X$ admits an irreducible curve $E$ of arithmetic genus $1$
  and an irreducible curve $\Gamma$ of arithmetic genus $0$ with $E \cdot \Gamma = r$ with $r = 1$ or $2$.
(The ``curves'' may be singular.)
\end{lem}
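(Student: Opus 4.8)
The plan is to reduce the statement to the classical analysis of ample linear systems on K3 surfaces due to Saint-Donat and Mayer, carried out in characteristic $\neq 2$. Write $L^2 = 2d$ with $d \geq 1$. A general member $C \in |L|$ is an irreducible (possibly singular) Gorenstein curve of arithmetic genus $d+1$: irreducibility of the general member holds in every characteristic by Bertini's irreducibility theorem, since $L^2 > 0$ forces the image of $\phi_L$ to be a surface, whereas smoothness may genuinely fail in positive characteristic, which is why the curves in the statement are claimed only to be irreducible. By adjunction and $K_X \cong \cO_X$ one has $\omega_C \cong L|_C$, so $\phi_L$ restricts on each such $C$ to its canonical map.

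First I would settle base-point freeness. By Saint-Donat's theorem, $|L|$ is base-point free unless there is an irreducible curve $E$ of arithmetic genus $1$ (so $E^2 = 0$) with $L \cdot E = 1$; in that case $L \sim aE + \Gamma$ with $a \geq 2$ and $\Gamma$ an irreducible $(-2)$-curve satisfying $E \cdot \Gamma = 1$, which is precisely conclusion (c) with $r = 1$. Hence I may assume $|L|$ is base-point free, so that $\phi_L$ is a morphism.

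Next I would examine $\phi_L$. If it is a closed immersion we are in case (a). Otherwise, by Saint-Donat, $\phi_L$ has degree $2$ onto its image, and its failure to separate points or tangent directions is controlled by the canonical map of the general $C$, which must therefore be hyperelliptic. The core input is then the classical dichotomy: if the general $C \in |L|$ is hyperelliptic, then either the $g^1_2$ propagates to a genus-$1$ pencil $|E|$ on $X$ with $L \cdot E = 2$, or $L^2 = 2$, or $L \cong \cO_X(2B)$ with $B^2 = 2$. In the first case I take $\Gamma = C$, which is irreducible with $E \cdot \Gamma = L \cdot E = 2$, giving (c) with $r = 2$. In the second case $B := C$ is an irreducible curve of arithmetic genus $2$ and $L = \cO_X(B)$, which is (b) with $k = 1$. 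In the third case we are in (b) with $k = 2$, after noting that $B$ is ample and that its general member is an irreducible genus-$2$ curve unless $|B|$ has a base point, in which case an elliptic $E$ with $B \cdot E = 1$ again produces (c).

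The main obstacle is this last step: establishing that hyperellipticity of the general member of $|L|$ forces either the elliptic pencil with $L \cdot E = 2$ or one of the relations $L^2 = 2$, $L = \cO_X(2B)$. This is the technical heart of Saint-Donat's classification, and the only place where characteristic $\neq 2$ is genuinely used, since the analysis of the canonical map of $C$ and of the resulting double-cover structure must be run in positive characteristic; by contrast, the base-point-freeness step and the irreducibility of general members transfer to arbitrary characteristic without difficulty.
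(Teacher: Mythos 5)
Your proposal follows essentially the same route as the paper: both reduce the lemma to Saint-Donat's analysis of ample linear systems on K3 surfaces in characteristic $\neq 2$ (the base-point-freeness criterion, the degree $1$ versus degree $2$ dichotomy for $\phi_L$, and the hyperelliptic classification). The paper simply quotes Saint-Donat's Theorem 8.1 as a packaged statement where you unpack the case analysis from the more primitive results, and your witness $\Gamma = C$ (a general member of $|L|$) for case (c) with $r = 2$ is a valid, slightly different choice from the one the packaged statement would hand you.

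Two points should be repaired. First, your opening assertion that a general member of $|L|$ is irreducible ``by Bertini's irreducibility theorem, since $L^2 > 0$ forces the image of $\phi_L$ to be a surface'' is false before the unigonal case is excluded: if $L \sim aE + \Gamma$ with $\Gamma$ a fixed $(-2)$-curve and $E\cdot\Gamma = 1$, the moving part of $|L|$ is $a|E|$, the image of $\phi_L$ is a rational normal curve (one-dimensional, despite $L^2 > 0$), and the general member $\Gamma + E_1 + \cdots + E_a$ is reducible. Since you treat exactly this case separately in the next paragraph, the argument survives, but the irreducibility claim must be deferred until after base-point freeness and the absence of fixed components are established. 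Second, your step ``if $\phi_L$ is not a closed immersion then it has degree $2$'' silently discards the possibility that $\phi_L$ is birational but not an immersion; this requires the observation, made explicitly in the paper, that a birational $\phi_L$ can only fail to be an embedding by contracting $(-2)$-curves orthogonal to $L$, which ampleness rules out.
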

\begin{proof}
This follows from results of Saint-Donat \cite{Saint-Donat:projectiveK3} as follows. 
By \cite{Saint-Donat:projectiveK3}*{Theorem 8.1}, 
either $X$ admits an irreducible curve $E$ with $p_a(E) = 1$ and an irreducible curve $\Gamma$ with $p_a(\Gamma) = 0$ and $E \cdot \Gamma = 1$,
or $L = \cO_X(C)$ with $C$ an irreducible curve with $p_a(C) > 1$.
If the former holds then we have (c). 
Assume that the latter holds.
Note that since $X$ is a K3 surface we have $C^2 = C \cdot (C+K) = 2 p_a(C) - 2$.
By \cite{Saint-Donat:projectiveK3}*{Theorem 3.1}, $L$ is base-point free
and hence we have a morphism $\phi_L \colon X \to \bP^N$.
Since $L^2 > 0$ the image of $\phi_L$ is $2$-dimensional.
By \cite{Saint-Donat:projectiveK3}*{Section 4.1}, the degree of $\phi_L$ is either $1$ or $2$, 
and if it is of degree $2$ then (b) or (c) holds
by \cite{Saint-Donat:projectiveK3}*{Section 5.1} (if $C^2 = 2$) and \cite{Saint-Donat:projectiveK3}*{Theorem 5.2} (if $C^2 > 2$).
If $\phi_L$ is of degree $1$ then it is an embedding
since it contracts no curve (since $L$ is ample), 
and this means that $L$ is very ample.
\end{proof}

We first consider case (a).
We embed $X$ into a projective space $\bP^N$ by $\lvert L \rvert$, 
and then take the composite with a projection $\bP^N \rationalto \bP^1$ (which is a rational map) in a general position. 
We can resolve the points of indeterminacy of the rational map $X \rationalto \bP^1$
and obtain a morphism $X' \to \bP^1$.
By \cite{Maulik:supersingular}*{Lemma 4.2}, 
all the fibers of this morphism are nodal, 
and general fibers are irreducible of genus $g = d+1$
(here we need $p > d+4$).
By \cite{Maulik:supersingular}*{Remark 4.3}
we can assume that general fibers are smooth.
Then by \cite{Saito:logsmooth}*{Corollary 1.9}, 
after replacing $K$ by a finite extension, 
we obtain a projective strictly semistable model $\cX'$ of $X'$ over $\cO_K$.
(Here we need $p > 2g+2 = 2d+4$.)

In case (b), 
the construction is similar\footnote{
In detail: Let $C = (f=0) \subset \bP^2$ be the ramification divisor of $X \to \bP^2$.
Since $p > B^2 + 4 = 6 = \deg f$ (since $B$ is the pullback of $\cO(1)$) and $C$ is smooth, 
$C$ has only finitely many inflection points.
Take a point on $\bP^2$ which is not on the union of $C$ and the tangent lines at the inflection points,
and take the projection from that point.
Then all the fibers of the resulting fibration $X' \to \bP^1$ are nodal
and general fibers are smooth irreducible.
Now use Saito's result similarly.}
 to the very ample case, 
in which we use the morphism $\lvert B \rvert \colon X \to \bP^2$ in place of $X \hookrightarrow \bP^N$.
(We need the inequality $p > 2 + 4$, 
which is satisfied since $p > 2d + 4 \geq 2 + 4$.)

In case (c),
it follows \cite{Pjateckii-Shapiro--Shafarevich}*{Theorem 1 of Section 3} 
(this requires $p \geq 5$)
that the linear system $\lvert E \rvert$ induces an elliptic fibration $X \to \bP^1$
with smooth general fibers.
Then $X \setminus \Gamma \to \bP^1$ is a hyperbolic fibration 
(whose general fibers are $r$-punctured elliptic curves).
Hence we can apply \cite{Saito:logsmooth}*{Corollary 1.9} similarly.
(We need the inequality $p > 2g + 2 = 4$ and $p > r$, 
which is satisfied since $p > 2d + 4 \geq 6$.)

Applying the minimal model program (\cite{Kawamata:mixed3fold}; this requires $p \geq 5$) to this strictly semistable scheme $\cX'$, 
we obtain a ``minimal model'', 
a projective flat scheme $\cX''$ over $\cO_K$
satisfying the following properties:
the generic fiber is smooth and birational to $X'$ (hence to $X$),
the irreducible components of the special fiber are geometrically normal, 
the relative canonical divisor $K_{\cX''/\cO_K}$ is nef and $\bQ$-Cartier, 
and $(\cX'', X''_k)$ has (at worst) log terminal singularities.
(Here $K_{\cX'' / \cO_K}$ is by definition
the Weil divisor, defined up to linear equivalence, 
which agrees with $\bigwedge^2 \Omega^1_{(\cX'')^\sm / \cO_K}$
on the smooth part $(\cX'')^{\sm}$.
This is well-defined since $\cX'' \setminus (\cX'')^\sm$ is of codimension at least $2$.)

Then it follows (since $X$ is K3, see \cite{Maulik:supersingular}*{Section 4.3}) that
the generic fiber $X''_K$ is isomorphic to $X$ 
and $K_{\cX'' / \cO_K} = 0$.

We apply Kawamata's classification \cite{Kawamata:mixed3fold}*{Theorem 4.4} of log terminal singularities (of index $1$): 
every non-smooth point of $\cX''$ is one of the following types.
(1) Semistable singularity (i.e., \'etale locally of the form $\cO_K[x,y,z] / (xy-\pi)$ or $\cO_K[x,y,z]/(xyz-\pi)$).
(2) An isolated non-smooth point which is a rational double point in the special fiber $X''_k$.

Moreover, the irreducible components of the special fiber are normal (by the construction of the minimal model program)
and hence regular in codimension one.
Hence it follows that $\cX''$ is \emph{strictly} semistable away from points of type (2).

We note that, if $\cX''$ is such a model over $\cO_K$, 
then 
for any extension $K'$ of $K$
we can construct a model over $\cO_{K'}$ satisfying the same properties.
This follows from the result of Saito \cite{Saito:logsmooth}*{Theorem 2.9.2}
that there exists a log blow-up $\cY \to \cX'' \otimes^{\log}_{\cO_K} \cO_{K'}$
such that $\cY$ is strictly semistable over $\cO_{K'}$ away from points of type (2).
Then since both log base change and log blow-up preserve the sheaf $\bigwedge^2 \Omega^1(\log)$ of top log differentials, 
and since the canonical divisor on $\cX''$ and $\cY$ corresponds respectively to
the line bundles 
$\bigwedge^2 \Omega^1_{(\cX'')^\sm/O_K}(\log)$ and
$\bigwedge^2 \Omega^1_{\cY^\sm/O_{K'}}(\log)$,
it follows that $K_{\cY/\cO_{K'}} = 0$.
The other properties are immediate.

By \cite{Artin:brieskorn}*{Theorem 2}, 
singularities of type (2) can be resolved potentially \emph{in the category of algebraic spaces}.
That is, after we replace $K$ by a finite extension (and replace $\cX''$ as above), there exists an algebraic space $\cX'''$ 
and a morphism $\phi \colon \cX''' \to \cX''$ satisfying the following conditions:
\begin{itemize}
\item $\phi$ is an isomorphism outside the singularities of type (2), 
and 
\item For each irreducible component $Z$ of $X''_k$ 
(note that $Z$ is smooth outside points of type (2)), 
$\phi\rvert_{\phi^{-1}(Z)} \colon \phi^{-1}(Z) \to Z$ is the minimal desingularization.
\end{itemize}

$\cX'''$ is an algebraic space over $\cO_K$ having (at worst) strictly semistable singularities.
Then the special fiber $X'''_k$ is a scheme, 
since it is covered by two open subschemes:
$(X'''_k)^\sm$ (which is a scheme since smooth $2$-dimensional)
and the complement of rational double points in $X''_k$.
Therefore $X'''_k$ is an SNC surface. 
It is projective since it is a blow-up of the projective scheme $X''_k$.
We want to show that this is an SNC log K3 surface (in the sense of Nakkajima \cite{Nakkajima:logk3}), 
that is, $\bigwedge^2 \Omega^1_{X'''_k/k}(\log)$ is trivial and $H^1(X'''_k, \cO_{X'''_k}) = 0$.

Since $\phi_k \colon X'''_k \to X''_k$ is the blow-up of the rational double points, 
we have 
$\phi_k^* \colon H^1(X'''_k, \cO_{X'''_k}) \cong H^1(X''_k, \cO_{X''_k})$.
Since $X''_k$ is Gorenstein, 
the dualizing complex is represented by an invertible sheaf $L$.
Let $\cU$ be the complement of the points of type (2) in $\cX''$.
Then, since $\cU$ is log smooth over $\cO_K$, we have $\bigwedge^2 \Omega^1_{U_k/k}(\log) = \bigwedge^2 \Omega^1_{\cU/\cO_K}(\log) \rvert_{U_k}$, 
which is trivial.
Since $L \rvert_{U_k}$ is isomorphic to this log canonical sheaf (\cite{Tsuji:Poincare}*{Theorem 2.21}),
and since $X''_k \setminus U_k$ is of codimension at least $2$, 
$L$ itself is trivial.
Then by duality we have $\dim H^2(X''_k, \cO_{X''_k}) = \dim H^0(X''_k, \cO_{X''_k})^\vee = 1$.
Since the Euler--Poincar\'e characteristic of $X''_k$ (which is equal to that of the generic fiber) is $2$,
we obtain $H^1(X''_k, \cO_{X''_k}) = 0$, 
hence $H^1(X'''_k, \cO_{X'''_k}) = 0$.
Since $\bigwedge^2 \Omega^1_{U_k/k}(\log)$ is trivial, 
and since the resolution of rational double points (which are canonical singularities) does not change the canonical divisor,
$\bigwedge^2 \Omega^1_{X'''_k/k}(\log)$ is also trivial.
Thus $X'''_k$ is an SNC log K3 surface.

Nakkajima \cite{Nakkajima:logk3}*{Proposition 3.4} gave a classification of SNC log K3 surfaces 
(which is parallel to Kulikov's classification in the complex case \cite{Kulikov:degeneration}*{Theorem II})
in arbitrary characteristic.
Using that, we obtain the following list of possibilities for the shape of $X'''_k$ (after replacing $k$ by an algebraic closure):

Type I: A smooth K3 surface.

Type II: A union of surfaces $Z_1, \ldots, Z_m$
with $Z_1$ and $Z_m$ rational and others elliptic ruled. 
Double curves
 $Z_h \cap Z_{h'}$ ($h \neq h'$) are rulings (elliptic curves) if $\lvert h-h' \rvert = 1$ and empty otherwise.
(There are no triple points.)

Type III: A union of rational surfaces,
whose dual graph of the configuration is a triangulation of $S^2$ (the sphere).

Now we use the unramified/crystalline hypothesis.
Applying the comparison theorems (Propositions \ref{prop:p-ss}, \ref{prop:weightss}) on $\cX'''$,
we observe that $E_2^{1,1}$ and $E_2^{2,0}$ are zero if $H^2$ of the generic fiber is unramified/crystalline.
Therefore it suffices to show that
if the special fiber is of Type II (resp.\ III) then $E_2^{1,1}$ (resp.\ $E_2^{2,0}$) is nonzero.

We can deduce this from the above description
and the description of the map $d_1$ (given in Lemma \ref{wss210} in the $l$-adic case
and in \cite{Mokrane:spectrale}*{Corollaire 4.14} in the $p$-adic case).
We write the proof in the $l$-adic notation (the proof in the $p$-adic case is identical).
Type II: 
observe that $E_1^{0,1} = H^1(X_k''^{(0)}, \Lambda)$ and $E_1^{1,1} = H^1(X_k''^{(1)}, \Lambda)$ 
are the direct sums of $\Lambda^{\oplus 2}$ respectively for each non-rational component and each double curve.
Hence $E_2^{1,1} = \Coker (\Res \colon E_1^{0,1} {\to} E_1^{1,1}) \neq 0$.
Type III: 
$E_1^{1,0} = H^0(X_k''^{(1)}, \Lambda)$ and $E_1^{2,0} = H^0(X_k''^{(2)}, \Lambda)$ 
are the direct sums of $\Lambda$ respectively for each double curve and each triple point.
Then $E_2^{2,0} = \Coker (\Res \colon E_1^{1,0} {\to} E_1^{2,0})$
is isomorphic to $H^2(S^2, \Lambda)$ (singular cohomology), which is nonzero.

Thus Theorem \ref{thm:maintheorem} is proved.
For Remark \ref{rem:maintheorem} (3), 
$\cX''$ is a (scheme) model of $X$ with only rational double point singularities in the special fiber.

\medskip

For the later application 
we need the following refinement.
Recall that 
at each step of the minimal model program
we have an opportunity of choosing which extremal ray to contract.

\begin{prop}[\cite{Maulik:supersingular}*{Theorem 4.1}] \label{prop:quasipolarization}
Assume that $L$ is very ample (and that we applied the construction of case (a)).
By a suitable choice of the extremal ray at each step of the minimal model program,
we can assume that the resulting model $\cX'''$
admits a quasi-polarization 
which extends 
the polarization on $X$ defined by $L$.
\end{prop}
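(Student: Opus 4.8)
The plan is to carry the polarization through the whole construction by running the relative minimal model program \emph{over the contraction defined by the pulled-back polarization}, so that every ray we contract is automatically trivial against that class and nefness is therefore preserved at each step. Recall that in case (a) we have the blow-up $\psi \colon X' \to X$ at the (finitely many) base points of the projection pencil $\bP^N \rationalto \bP^1$, and that the generic fiber of the strictly semistable scheme $\cX'$ is $X'$. The class $\psi^* L$ is nef and big on $X'$ (it is the pullback of the ample $L$ under a birational morphism of surfaces) and is trivial precisely on the $\psi$-exceptional curves. First I would extend $\psi^* L$ to a line bundle on the regular scheme $\cX'$ and, by correcting with a vertical $\bQ$-divisor supported on the components $Z_i$ of the special fiber (using the negative semidefiniteness of the intersection pairing on vertical classes, in the manner of Zariski's lemma), arrange a relatively nef and relatively big $\bQ$-line bundle $\cA$ on $\cX'$ with $\cA\rvert_{X'_\eta} = \psi^* L$. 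Passing to a suitable multiple and invoking base-point-freeness for this $K$-trivial-type class, I would take $\cA$ semiample and let $g \colon \cX' \to \cZ$ be the associated contraction over $\cO_K$, so that $\cA = g^* \cA_\cZ$ for an $\cO_K$-ample $\cA_\cZ$ on $\cZ$.

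Next I would run the $K_{\cX'/\cO_K}$-MMP (Kawamata \cite{Kawamata:mixed3fold}) \emph{relative to $\cZ$}: at each step, among the available $K$-negative extremal rays choose one contracted by $g$, equivalently one with $\cA \cdot R = 0$. This is exactly the ``suitable choice of extremal ray.'' Because every contracted ray is $\cA$-trivial, the strict transform of $\cA$ behaves well: under a divisorial contraction $\cA$ descends and remains nef, and under a flip the $\cA$-triviality of the flipping locus means $\cA$ is a pullback from the base of the small contraction, so its strict transform on the flipped model is again nef. Hence the strict transform $\cA''$ of $\cA$ on the resulting minimal model $\cX''$ is nef and big, and on the generic fiber it restricts to $L$: the only curves contracted there are the $\psi$-exceptional ones, which are precisely the $\cA$-trivial curves, so pushing forward $\psi^* L$ recovers $L$ on $X''_\eta \cong X$.

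Finally I would pull $\cA''$ back along the resolution of rational double points $\phi \colon \cX''' \to \cX''$. Since $\phi$ is proper and birational, $\cL''' := \phi^* \cA''$ is nef on $\cX'''$ and restricts to $L$ on the generic fiber. On the special fiber $\cL'''\rvert_{X'''_s}$ is nef, and its self-intersection equals $\cL'''\rvert_{X_\eta}^2 = L^2 = 2d > 0$ by the invariance of intersection numbers in the flat family; a nef class of positive self-intersection on a proper surface is big, so $\cL'''\rvert_{X'''_s}$ is a quasi-polarization. Thus $\cL'''$ is a relatively nef line bundle extending the polarization on $X$ defined by $L$, as required.

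The main obstacle is the middle step: guaranteeing that the $K_{\cX'/\cO_K}$-MMP can be run entirely through $\cA$-trivial contractions and still terminate at the relatively minimal model with $K_{\cX''/\cO_K} = 0$ established earlier. This is precisely what ``a suitable choice of the extremal ray at each step'' encodes, and the cleanest way to secure it is to realize the process as Kawamata's relative MMP over the ample model $\cZ$ of $\cA$, for which the cone, contraction, and termination statements are available in the mixed-characteristic setting. The secondary technical points are that the extension $\cA$ can indeed be taken relatively nef, big, and semiample (so that $\cZ$ exists), which is where the vertical Zariski-lemma correction and base-point-freeness enter; once these are in place the preservation of nefness and the final bigness computation are formal.
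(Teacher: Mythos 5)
There is a genuine gap, and it sits in your very first step. You propose to extend $\psi^* L$ to a relatively nef, big, semiample $\bQ$-class $\cA$ on the initial semistable model $\cX'$ by correcting with a vertical $\bQ$-divisor $\sum a_i Z_i$ ``in the manner of Zariski's lemma.'' This is not achievable in general. Since $\sum_i Z_i$ is the full special fiber and hence linearly trivial, a curve $C$ contained in a single component $Z_i$ and disjoint from the double locus satisfies $Z_j \cdot C = 0$ for all $j$, so no vertical correction changes $\cA \cdot C$ at all; and such curves (the flipping curves the MMP exists to deal with) are exactly where a naive extension of the polarization is negative. If a nef, big, semiample extension existed on $\cX'$ itself, one could pass directly to its ample model and the entire minimal model program would be superfluous. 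A second, related problem: running the $K$-MMP relative to the ample model $\cZ$ of $\cA$ only terminates with $K$ nef \emph{over} $\cZ$; there is no reason the $\cA$-trivial rays exhaust the $K$-negative rays, so this process need not reach the model $\cX''$ with $K_{\cX''/\cO_K} = 0$ constructed earlier.

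The paper's proof instead invokes Maulik's \emph{MMP with scaling} of the closure $\cL$ of $L$, whose mechanism is essentially opposite to yours: at each step one contracts a ray $R$ with $(K + \lambda \cL)\cdot R = 0$ and $K \cdot R < 0$, hence $\cL \cdot R > 0$ (not $\cL$-trivial). What is preserved is nefness of $K + \lambda \cL$ for a decreasing sequence of thresholds $\lambda$, and only at the end, where $K_{\cX''/\cO_K} = 0$ and the last threshold is still positive, does one conclude that $\cL''$ itself is nef. Nothing nef involving $\cL$ alone is available before the final step. Your final paragraph (pulling back along $\phi \colon \cX''' \to \cX''$ and checking bigness on the special fiber by flatness of the intersection number) is fine, but the core of the argument needs to be the scaling device rather than a pre-built ample model.
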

By definition a quasi-polarization of $\cX'''$ is an element of $\Pic(\cX''')$
whose restriction to each geometric fiber is
the class of a nef big line bundle in the Picard group.

\begin{proof}
It suffices to extend $L$ to quasi-polarization on $\cX''$.
This is achieved by applying ``minimal model program with scaling'', 
as explained in \cite{Maulik:supersingular}*{Section 4.3}.
Although his theorem is stated for the case $X$ is supersingular,
his argument can be applied to our more general case (provided $X$ has potential good reduction).
\end{proof}

This proposition fails in the case $L$ is not very ample.
The problem is that, in case (c), 
the fibration we used (which is induced by $\lvert E \rvert$) is different from the one induced by $\lvert L^{\otimes m} \rvert$ ($m$ large enough),
and Maulik's argument applied to this fibration extends only $E$, not $L$, to a nef divisor on $\cX''$.

\section{Moduli spaces and period maps} \label{sec:moduli}

The (potential) good reduction criterion is deeply related to the surjectivity of the period maps of K3 surfaces
(I thank Tetsushi Ito and Keerthi Madapusi Pera for explaining this concept to me).
In this section we prove this surjectivity 
and, as a by-product, give a bound for the extension degree in Theorem \ref{thm:maintheorem}.

First we shall introduce the 
moduli stacks of K3 surfaces, orthogonal Shimura varieties, period maps, and those with level structures, and their integral models.
For precise definitions and proofs see 
\cite{Rizov:Kuga--Satake}, \cite{Maulik:supersingular}, \cite{MadapusiPera:TateK3} and \cite{MadapusiPera:integralmodels}\footnote{
One should be careful since the notation differs in these papers.
We mainly follow that of \cite{MadapusiPera:TateK3}, 
but in order to avoid collision of notation we use $\bK$ instead of his $K$ and $\Lambda_{d}$ instead of his $L_{d}$.
}.

Let $d$ be a positive integer (which we fix throughout this section). 
A K3 surface over a scheme $S$ is a smooth proper \emph{algebraic space} over $S$ 
whose fibers are K3 surfaces (over fields, in the usual sense).
A primitive quasi-polarization (resp.~a primitive polarization) of a K3 surface $X$ over $S$
is a section $\xi \in \Gamma(S, \underline{\Pic}(X/S))$
whose restriction $\xi_{\overline k}$ on each geometric fiber is the class of a nef big line bundle (resp.~an ample line bundle) on $X_{\overline k}$
which is primitive (i.e., $\xi_{\overline k}$ is not a nontrivial multiple of an element of $\Pic(X_{\overline k})$).
The degree of $\xi$ is the self-intersection $\xi^2$, which is a locally constant integer on $S$.
We denote by $M_{2d}$ the Deligne--Mumford stack over $\bZ[1/2]$ parametrizing 
K3 surfaces equipped with primitive quasi-polarizations of degree $2d$, 
and by $M^\circ_{2d}$ its substack where the quasi-polarization is a polarization.
By definition, for a scheme $S$, 
$M_{2d}(S)$ (resp.~$M^\circ_{2d}(S)$) is the groupoid\footnote{A groupoid is a category such that all morphisms are isomorphisms. 
A set can be naturally regarded as a groupoid, 
but the groupoids $M_{2d}(S)$ and $M^\circ_{2d}(S)$ are not of that kind.
} whose objects are the K3 surfaces over $S$ equipped with a primitive quasi-polarization (resp.~a primitive polarization) of degree $2d$.
Then $M_{2d}$ and $M^\circ_{2d}$ are of finite type over $\bZ[1/2]$.

Let $U = \langle e, f \rangle$ be the quadratic lattice ($=$ $\bZ$-module equipped with a quadratic form)
of rank $2$ with $e \cdot e = f \cdot f = 0$ and $e \cdot f = 1$,
and $E_8$ the $E_8$ lattice.
Let $\Lambda = U^{\oplus 3} \oplus E_8 ^{\oplus 2}$
and $\Lambda_{d} = \langle e+df \rangle \oplus U^{\oplus 2} \oplus E_8 ^{\oplus 2} = \langle e-df \rangle ^\perp \subset \Lambda$.
Then
for any complex K3 surface $X$, 
there exists a (non-canonical) isometry from $H^2(X, \bZ)$ to $\Lambda$.
Here the quadratic form on $H^2(X, \bZ)$ is defined to be the canonical pairing
$H^2(X, \bZ) \times H^2(X, \bZ) \stackrel\cup\to H^4(X, \bZ) \stackrel\sim\to \bZ$
multiplied by $-1$.
Moreover,
for any primitive quasi-polarization $L$ of $X$ of degree $2d$,
we can choose such an isometry to take $c_1(L)$ to $e-df \in \Lambda$
(so that we obtain an isometry $PH^2((X,L), \bZ) \stackrel\sim\to \Lambda_d$,
where $PH^2((X, L)) = \langle c_1(L) \rangle ^\perp \subset H^2(X)$). 
Let $\Sh(\Lambda_{d})$ be the (canonical model defined over $\bQ$ of the) orthogonal Shimura variety attached to the group $\SO(\Lambda_{d} \otimes \bQ)$,
so that $\Sh(\Lambda_{d})_\bC$ parametrizes Hodge structures of a certain type on $\Lambda_d$.
Then the period map\footnote{
 To be precise, the period map is defined only on a suitable double covering $\tilde M_{2d}$ of $M_{2d}$. 
 However, if $\bK$ is neat, then  $\tilde M_{2d, \bK} \to M_{2d, \bK}$ admits a (non-canonical) section, 
 and the level structured period map $\iota_\bK$ is indeed defined on $M_{2d,\bK}$ via that section.
 Since  we actually use only $\iota_\bK$ for such $\bK$'s,
 we omit this tilde for simplicity.
}
 $\iota_\bC \colon M_{2d,\bC} \to \Sh(\Lambda_{d})_\bC$, 
attaching the Hodge structure $PH^2((X, L), \bZ)$ to each $(X, L)$, 
descends to $\iota_{\bQ} \colon M_{2d,\bQ} \to \Sh(\Lambda_{d})$.

Let $\bK_{\Lambda_{d}} \subset \SO(\Lambda_{d})(\bA_f)$ be the subgroup 
of the elements which preserve $\Lambda_d \otimes \hat \bZ$ and 
act trivially on the discriminant group $\disc \Lambda_d = \Lambda_d^{\vee} / \Lambda_d$, 
where $\Lambda_d^{\vee} = \Hom(\Lambda_d, \bZ)$ is the dual lattice.
An admissible subgroup of $\SO(\Lambda_{d})(\bA_f)$ is a compact open subgroup of $\bK_{\Lambda_{d}}$.
Let $\bK$ be an admissible subgroup.
We say that $\bK$ is neat if, for every $g \in \SO(\Lambda_{d})(\bA_f)$, the discrete group $\SO(\Lambda_{d})(\bQ) \cap g \bK g^{-1}$ is torsion-free.
We say that $\bK$ is prime to $p$ if $\bK$ is of the form $\bK^p \bK_p$ with $\bK_p = \bK_{\Lambda_{d},p}$.
(Here, as in the standard notation,
$-_p$ and $-^p$ stands for the $p$-part and the prime-to-$p$ part respectively.)

Let $p$ be an odd prime, 
and $\bK$ an admissible subgroup prime to $p$.
For a morphism $S \to M_{2d,\bZ_{(p)}}$ corresponding to $(f \colon X \to S, \xi)$ with $S$ a scheme, 
let $I^p(S)$ be the set of isometries $\underline{\Lambda \otimes \hat \bZ^p} \to R^2 f_* \hat \bZ^p(1)$ taking $e-df$ to $c_1(\xi)$
(where $\hat \bZ^p$ is the prime-to-$p$ part of $\hat \bZ$).
This defines a sheaf $I^p$ on $M_{2d,\bZ_{(p)}}$
on which $\bK_{\Lambda_{d}}^p$ acts naturally.
We define a $\bK^p$-level structure of a K3 surface $(f \colon X \to S, \xi)$ over $\bZ_{(p)}$
to be a section of the sheaf $I^p/\bK^p$ over $S$. 
Then there is a moduli stack $M_{2d, \bK, \bZ_{(p)}}$ parametrizing objects equipped with $\bK^p$-level structures,
and there is a finite \'etale map $M_{2d, \bK, \bZ_{(p)}} \to M_{2d, \bZ_{(p)}}$ of degree $[\bK_{\Lambda_{d}} : \bK]$.
We also have the Shimura variety (with level structure) $\Sh_\bK(\Lambda_{d})$ over $\bQ$,
which is a finite \'etale cover of $\Sh(\Lambda_{d})$.
The period map $\iota_{\bQ} \colon M_{2d,\bQ} \to \Sh(\Lambda_{d})$
lifts to the period map (with level structure) $\iota_{\bK,\bQ} \colon M_{2d, \bK, \bQ} \to \Sh_\bK(\Lambda_{d})$.
If $\bK$ is neat, 
then $\Sh_\bK(\Lambda_{d})$ is a scheme (from general theory).
If $\bK$ is small enough (so that there are no nontrivial automorphisms of quasi-polarized K3 surfaces with $\bK^p$-level structures)
then $M_{2d, \bK, \bZ_{(p)}}$ is an algebraic space.

Madapusi Pera (\cite{MadapusiPera:integralmodels}) recently constructed integral canonical models 
$\cS(\Lambda_{d})$ of $\Sh(\Lambda_{d})$ over $\bZ[1/2]$ and, 
for each $p>2$ and for $\bK$ prime to $p$, 
$\cS_\bK(\Lambda_{d})_{(p)}$ of $\Sh_\bK(\Lambda_{d})$ over $\bZ_{(p)}$.
If $\bK^p$ is small enough then $\cS_\bK(\Lambda_{d})_{(p)}$ is a scheme.
He extended the period maps to $\iota_{\bZ[1/2]} \colon M_{2d, \bZ[1/2]} \to \cS(\Lambda_{d})$
and $\iota_{\bK, \bZ_{(p)}} \colon M_{2d, \bK, \bZ_{(p)}} \to \cS_\bK(\Lambda_{d})_{(p)}$,
and showed that these maps are \'etale
(\cite{MadapusiPera:TateK3}*{Proposition 4.7 and its proof}).

It is known that $\iota_\bC$ (and hence $\iota_\bQ$) is surjective 
(Kulikov \cite{Kulikov:surjectivity}: this follows from his result on degenerations from arguments similar to below).
We show (under an assumption) that this is true also in characteristic $p$.

\begin{thm} \label{thm:surjectivity}
Assume $p > 18d+4$.
Take an admissible compact open prime-to-$p$ subgroup $\bK \subset \SO(\Lambda_{d})(\bA_f)$ small enough so that 
$M_{2d, \bK, \bZ_{(p)}}$ is an algebraic space and $\cS_\bK(\Lambda_{d})_{(p)}$ is a scheme.
Then $\iota_{\bK, \bZ_{(p)}} \colon M_{2d, \bK, \bZ_{(p)}} \to \cS_\bK(\Lambda_{d})_{(p)}$ is surjective.
\end{thm}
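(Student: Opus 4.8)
The plan is to deduce the surjectivity from the good reduction criterion (Theorem \ref{thm:maintheorem}) by spreading a characteristic-$p$ point out over a mixed-characteristic valuation ring and lifting its generic point to a K3 surface in characteristic $0$. Since $\iota_{\bK,\bZ_{(p)}}$ is \'etale, its image is open in $\cS_\bK(\Lambda_{d})_{(p)}$; and since $\iota_{\bK,\bQ}$ is surjective (this follows from Kulikov's surjectivity in characteristic $0$), the image contains the entire generic fiber. As $\cS_\bK(\Lambda_{d})_{(p)}$ is smooth, hence flat, over $\bZ_{(p)}$, the generic fiber is dense, so it suffices to show that every closed point $s$ of the special fiber lies in the image.

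Let $s$ be such a point, with residue field $k$ of characteristic $p$. Using smoothness of $\cS_\bK(\Lambda_{d})_{(p)}$ over $\bZ_{(p)}$, I would choose a complete discrete valuation ring $\cO_K$ of mixed characteristic $(0,p)$ and a morphism $\gamma \colon \Spec \cO_K \to \cS_\bK(\Lambda_{d})_{(p)}$ whose closed point maps to $s$ and whose generic point $\gamma_\eta$ lands in the characteristic-$0$ fiber. Applying surjectivity of $\iota_{\bK,\bQ}$ to $\gamma_\eta$ (after a finite extension of $K$ if necessary), I obtain a quasi-polarized K3 surface $(X,\xi)$ over $K$ with $\bK^p$-level structure whose period point is $\gamma_\eta$.

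The crucial point is then that, because this period point \emph{extends} to the integral point $\gamma$ of the integral canonical model, the $G_K$-representation $H^2_\et(X_{\overline K}, \bQ_\ell)$ is unramified for $\ell \neq p$ and crystalline for $\ell = p$: this follows from the good-reduction properties of the Kuga--Satake abelian scheme attached to an $\cO_K$-point of $\cS_\bK(\Lambda_{d})_{(p)}$, from which the $H^2$ of the K3 surface is cut out. Granting this, I would take $L$ to be the cube of (a line bundle representing) $\xi$, which is very ample on a K3 surface and satisfies $L^2 = 9 \cdot 2d = 18d$; the hypothesis $p > 18d+4$ is then exactly the inequality $p > L^2 + 4$ required by Theorem \ref{thm:maintheorem}. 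That theorem produces, after a further finite extension $K'/K$, a smooth proper algebraic space model $\cX$ over $\cO_{K'}$ with generic fiber $X_{K'}$.

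Finally, $\xi$ extends over the regular algebraic space $\cX$ (by Proposition \ref{prop:quasipolarization}, or by taking the closure of a divisor) and the prime-to-$p$ level structure extends by smoothness of the family, so $\cX$ defines an $\cO_{K'}$-point of $M_{2d,\bK,\bZ_{(p)}}$. Its image $\iota_{\bK,\bZ_{(p)}}([\cX])$ and the base change of $\gamma$ to $\cO_{K'}$ are two $\cO_{K'}$-points of the separated scheme $\cS_\bK(\Lambda_{d})_{(p)}$ that agree on the generic fiber, hence coincide; in particular $\iota_{\bK,\bZ_{(p)}}$ sends the point $[\cX_{k'}]$ to $s$, so $s$ lies in the image. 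I expect the main obstacle to be the crucial point above --- that integrality of the period point forces $H^2$ to be unramified/crystalline --- together with the verification that $[\cX]$ extends to a genuine $\cO_{K'}$-point of the moduli stack, since both rely on the fine structure of Madapusi Pera's integral canonical models rather than on the K3 geometry developed in Section \ref{sec:proof}.
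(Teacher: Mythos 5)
Your overall strategy is the paper's: reduce surjectivity to lifting a suitable discrete valuation ring point of $\cS_\bK(\Lambda_{d})_{(p)}$ whose closed point is $s$, use properness of the GSpin cover $\tilde \cS_{\tilde\bK}(\Lambda_d)_{(p)} \to \cS_\bK(\Lambda_d)_{(p)}$ to extend the Kuga--Satake abelian variety and deduce that $H^2_\et(X_{\overline K}, \bQ_\ell)$ is unramified, then invoke Theorem \ref{thm:maintheorem} and Proposition \ref{prop:quasipolarization} and conclude by separatedness. Your ``crucial point'' is exactly the content of Proposition \ref{prop:extension}, so that part of the plan is sound.

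The genuine gap is the polarized-versus-quasi-polarized issue. You lift $\gamma_\eta$ only to a point of $M_{2d,\bK}$, i.e.\ to a \emph{quasi}-polarized K3 surface $(X,\xi)$, and then assert that $\xi^{\otimes 3}$ is very ample. That is false in general: a quasi-polarization is merely nef and big, its powers can contract $(-2)$-curves, and Saint-Donat's very-ampleness of $L^{\otimes 3}$ requires $L$ ample. Consequently neither Theorem \ref{thm:maintheorem} (which demands an \emph{ample} $L$ with $p > L^2+4$) nor Proposition \ref{prop:quasipolarization} (which assumes $L$ \emph{very ample}) applies as you have set things up. To repair this you must arrange that $\gamma_\eta$ lies in $\Image(\iota\rvert_{M^\circ})$, the image of the open locus where the quasi-polarization is an honest polarization; this image is a dense open subset of $\cS$ (indeed $\iota\rvert_{M^\circ}$ is an open immersion, which also makes your appeal to Kulikov's characteristic-$0$ surjectivity unnecessary). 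This is precisely what the paper's proof of Theorem \ref{thm:surjectivity} does, and it is the reason for its commutative-algebra step: locally $\Image(\iota\rvert_{M^\circ}) \supset \Spec A[1/f]$, and one builds a chain of primes descending from the prime of $s$ so that the penultimate one avoids $f$, producing a DVR point with closed point $s$ and generic point in $\Spec A[1/f]$. Your construction of $\gamma$ from smoothness of $\cS$ over $\bZ_{(p)}$ gives no control over whether $\gamma_\eta$ avoids the closed complement of this image, so an avoidance argument must be added (note also that the paper's version does not need the generic point to have characteristic $0$ at all, since Proposition \ref{prop:extension} works for equicharacteristic $K$ via the $\ell$-adic criterion).
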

This follows from the following property of $\iota_{\bK,\bZ_{(p)}}$.
\begin{prop} \label{prop:extension}
Let $p$, $d$, and $\bK$ be as in the above theorem.
Then $\iota_{\bK,\bZ_{(p)}}$ satisfies the following extension property:
given any commutative diagram
\[
\xymatrix{
M^\circ_{2d, \bK, \bZ_{(p)}} \ar[r] & M_{2d, \bK, \bZ_{(p)}} \ar[r]^{\iota_{\bK,\bZ_{(p)}}} & \cS_\bK(\Lambda_{d})_{(p)} \\
\Spec K \ar[u] \ar[rr] & & \Spec \cO_K \ar[u]
}
\]
with $K$ a complete discrete valuation field with perfect residue field, 
there exists a finite unramified extension $K'$ of $K$ and a (not necessarily unique) morphism $\Spec \cO_{K'} \to M_{2d, \bK, \bZ_{(p)}}$ 
making the following diagram commutative:
\[
\xymatrix{
M^\circ_{2d, \bK, \bZ_{(p)}} \ar[r] & M_{2d, \bK, \bZ_{(p)}} \ar[r]^{\iota_{\bK,\bZ_{(p)}}} & \cS_\bK(\Lambda_{d})_{(p)} \\
\Spec K \ar[u] \ar[rr] & & \Spec \cO_K \ar[u] \\
\exists \Spec K' \ar[u] \ar[rr] & & \Spec \cO_{K'} \ar[u] \ar@{-->}[uul]^(0.4){\exists} .
}
\]
\end{prop}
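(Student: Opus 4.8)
The plan is to extract from the integral period point that the cohomology of $X$ is crystalline (resp.\ unramified), to feed this into Theorem~\ref{thm:maintheorem} to obtain potential good reduction, and then to exploit the \'etaleness of $\iota_{\bK,\bZ_{(p)}}$ in order to realise that good reduction over an \emph{unramified} extension. Throughout write $\iota$ for $\iota_{\bK,\bZ_{(p)}}$ and let $k$ be the residue field of $\cO_K$. Unwinding the diagram, the upper row supplies a polarized K3 surface $(X,\xi)$ over $K$ of degree $2d$ with a $\bK^p$-level structure, and the lower-right corner supplies a point $s\in\cS_\bK(\Lambda_{d})_{(p)}(\cO_K)$ whose generic fibre is $\iota$ of $(X,\xi)$. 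The key input is that $\iota$ is constructed (\cite{MadapusiPera:TateK3},\cite{MadapusiPera:integralmodels}) so that the pullback along $\iota$ of the Kuga--Satake local systems on $\cS_\bK(\Lambda_{d})_{(p)}$ is the primitive cohomology $PH^2_\et(X_{\overline K})$; since $s$ lands in the \emph{integral} model, the associated Kuga--Satake abelian scheme extends to an abelian scheme over $\cO_K$ and hence has good reduction. Its $p$-adic cohomology is therefore crystalline and its prime-to-$p$ cohomology is unramified, so $PH^2_\et(X_{\overline K},\bQ_p)$ is crystalline (when $\charac K=0$) and $PH^2_\et(X_{\overline K},\bQ_l)$ is unramified (for $l\neq p$). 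Adjoining the algebraic class $c_1(\xi)$, the full $H^2_\et(X_{\overline K},\bQ_p)$ (resp.\ $\bQ_l$) is crystalline (resp.\ unramified).

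Next I would apply the main theorem. Replacing $\xi$ by the very ample $\xi^{\otimes 3}$ gives an ample $L$ with $L^2=18d$, so the hypothesis $p>18d+4=L^2+4$ of Theorem~\ref{thm:maintheorem} holds, and its case (b) (if $\charac K=0$) or case (a) (if $\charac K=p$) applies. Thus, over some finite extension $K''/K$, the surface $X_{K''}$ has good reduction: there is a smooth proper algebraic space $\cX$ over $\cO_{K''}$ with $\cX_{K''}\cong X_{K''}$. By Proposition~\ref{prop:quasipolarization} I may arrange $\cX$ to carry a quasi-polarization extending $\xi^{\otimes 3}$; since the relative Picard functor of a smooth proper family of K3 surfaces is unramified over the base, the class $\xi$ itself (and the $\bK^p$-level structure, via smooth proper base change for $R^2f_*\hat\bZ^p(1)$) extends over $\cO_{K''}$. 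This produces a point $z\in M_{2d,\bK,\bZ_{(p)}}(\cO_{K''})$ with $z|_{K''}=(X,\xi,\text{level})_{K''}$, and separatedness of the scheme $\cS_\bK(\Lambda_{d})_{(p)}$ forces $\iota\circ z=s_{\cO_{K''}}$.

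Finally I would descend to an unramified extension using that $\iota$ is \'etale. The special point $z_0=z|_{k''}$ lies in the fibre $\iota^{-1}(\bar s)$, which is finite \'etale over the perfect field $k$; hence $z_0$ is already defined over a finite (separable) subextension $k'\subseteq k''$, say by $\bar y\in M_{2d,\bK,\bZ_{(p)}}(k')$. Let $K'/K$ be the unramified extension with residue field $k'$, so $K'\subseteq K''$. \'Etaleness of $\iota$ together with henselianness of $\cO_{K'}$ lifts $\bar y$ to a unique $\tilde y\in M_{2d,\bK,\bZ_{(p)}}(\cO_{K'})$ with $\iota\circ\tilde y=s_{\cO_{K'}}$; base-changing to $\cO_{K''}$ and invoking uniqueness of \'etale lifts over the henselian ring $\cO_{K''}$ identifies $\tilde y_{\cO_{K''}}$ with $z$. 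In particular $\tilde y|_{K'}$ and $(X,\xi,\text{level})_{K'}$ coincide after the faithfully flat base change $K''/K'$, hence coincide; so $\tilde y\colon\Spec\cO_{K'}\to M_{2d,\bK,\bZ_{(p)}}$ is the desired extension.

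The hard part will be the first and last steps. Passing from the \emph{bare} existence of an $\cO_K$-point of the integral model to crystallinity/unramifiedness of $H^2$ is where the deep external input sits: it relies on Madapusi Pera's integral canonical models and on the compatibility of the integral period map with the $p$-adic and $l$-adic realizations of the Kuga--Satake construction, so that good reduction of the Kuga--Satake abelian scheme transfers to $H^2$ of the K3. The genuinely delicate point for the argument itself is upgrading the finite extension produced by Theorem~\ref{thm:maintheorem} to an unramified one while keeping the generic fibre equal to $(X,\xi)$: since the \'etale fibre of $\iota$ may contain several quasi-polarized K3 surfaces, the matching $\tilde y|_{K'}=(X,\xi,\text{level})_{K'}$ cannot be asserted abstractly and must be forced through the uniqueness of \'etale lifts over $\cO_{K''}$ as above.
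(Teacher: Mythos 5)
Your proposal is correct, and its first two stages are exactly the paper's argument: the paper likewise lifts the $\cO_K$-point of $\cS_\bK(\Lambda_{d})_{(p)}$ to the finite cover $\tilde \cS_{\tilde\bK}(\Lambda_{d})_{(p)}$ (a step you compress --- the Kuga--Satake abelian scheme lives on the GSpin cover, so a preliminary finite extension is needed before one can even speak of it over $\cO_K$; this is harmless), deduces that the Kuga--Satake abelian variety has good reduction, hence that $PH^2_\et$ and then $H^2_\et$ is unramified resp.\ crystalline, and then applies Theorem \ref{thm:maintheorem} together with Proposition \ref{prop:quasipolarization} to $(X,L^{\otimes 3})$, extending $L$ via its closure (whose cube is the quasi-polarization $\cL_3$) and extending the level structure by smooth proper base change. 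Where you genuinely diverge is your final step: the paper's proof stops once it has an $\cO_{K''}$-valued point of $M_{2d,\bK,\bZ_{(p)}}$ for \emph{some} finite extension $K''/K$ and never addresses the word ``unramified'' in the statement, whereas you descend to an unramified $K'\subseteq K''$ by observing that the special point of your model lies in the \'etale fibre over $\bar s$ (hence is defined over a finite separable $k'/k$) and then invoking unique \'etale lifting over the henselian rings $\cO_{K'}$ and $\cO_{K''}$ to match the lift with $(X,\xi,\mathrm{level})$ after a faithfully flat base change. This step is sound --- the only imprecisions are that the fibre $\iota^{-1}(\bar s)$ is \'etale but need not be finite (irrelevant, since only the residue field of the single point $z_0$ matters), and that applying henselian lifting to the algebraic space $M_{2d,\bK,\bZ_{(p)}}$ implicitly uses that $\iota$ is separated, hence representable by schemes, which holds here --- and it actually establishes the unramifiedness claim that the paper's own proof leaves unjustified. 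Note that neither Theorem \ref{thm:surjectivity} nor Corollary \ref{cor:mainthm-degree2} uses the unramifiedness, which is presumably why the paper does not supply this last argument; your version proves the proposition as literally stated.
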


For simplicity, we write the maps by $M^\circ_{\bK} \to M_{\bK} \stackrel{\iota_{\bK}}\to \cS_{\bK}$ or
by $M^\circ \to M \stackrel\iota\to \cS$. 

\begin{proof}[Proof of Proposition \ref{prop:extension}]
The morphism $\Spec K \to M$ corresponds to a primitively quasi-polarized K3 surface $(X, \xi)$ over $K$ with a $\bK^p$-level structure.
There is a ``Kuga--Satake'' abelian variety $A$ of $(X, \xi)$,
whose construction we recall in the next paragraph,
defined over a finite extension $K'$ of $K$
and equipped with an action of $C(\Lambda_d)$,
satisfying the following properties (as in \cite{MadapusiPera:TateK3}*{Theorem 4.17}).
For each $l \neq p$,
there exists
an isomorphism
\[ 
 H^1_\et(A_{K'^\sep}, \bQ_l) \cong C(PH^2_\et(X_{K'^\sep}, \bQ_l)(1))
\]
of $\bQ_l$-vector spaces,
where 
$C$ denotes the Clifford algebra of a quadratic space and 
$PH^2(-)(1)$ denotes the orthogonal complement of $\langle c_1(\xi) \rangle$ in $H^2(-)(1)$,
in such a way that 
the subalgebra $C(\Lambda_d) \otimes \bQ_l \subset \End H^1_\et(A_{K'^\sep}, \bQ_l) $
is Galois-equivariantly identified under the above isomorphism with the subalgebra
$C(PH^2_\et(X_{K'^\sep}, \bQ_l)(1)) \allowbreak \subset \allowbreak \End C(PH^2_\et(X_{K'^\sep}, \bQ_l)(1))$
(acting on itself by right translation).
There are also analogous isomorphisms for $p$-adic representations (if $\charac K = 0$).

We shall recall the construction of $A$
(for details see \cite{MadapusiPera:integralmodels}*{Section 3}).
There are homomorphisms $\GSpin(\Lambda_{d}) \to \SO(\Lambda_{d})$
and $\GSpin(\Lambda_{d}) \to \GSp(C(\Lambda_{d}))$,
where $C(\Lambda_{d})$ is equipped with a certain symplectic form.
These homomorphisms induce finite morphisms between the corresponding Shimura varieties and
finite morphisms between their integral models.
Moreover, denoting by $\tilde \Sh = \tilde \Sh_{\tilde \bK}(\Lambda_{d})$ and $\tilde \cS = \tilde \cS_{\tilde \bK}(\Lambda_{d})$ 
the GSpin Shimura variety and its integral model
(where $\tilde \bK$ is the inverse image of $\bK$),
we have finite \'etale morphisms $\tilde \Sh_{\tilde \bK}(\Lambda_{d}) \to \Sh_{\bK}(\Lambda_{d})$
and $\tilde \cS_{\tilde \bK}(\Lambda_{d})_{(p)} \to \cS_{\bK}(\Lambda_{d})_{(p)}$.
Replacing $K$ by some finite extension $K'$ such that
$\Spec K' \to \cS$ lifts to a morphism $\Spec K' \to \tilde \cS$,
we define the Kuga--Satake abelian variety $A$ 
to be the restriction of the pullback of the universal abelian variety on the GSp Shimura variety to that $K'$-valued point of $\tilde \cS$.
This $A$ is known to be independent up to isomorphism of the choice of the lift.

Since $\tilde \cS \to \cS$ is proper, 
$\Spec K' \to \tilde \cS$ extends to a morphism $\Spec \cO_{K'} \to \tilde \cS$.
This means that $A$ extends to an abelian scheme over $\Spec \cO_{K'}$ (again the pullback of the universal abelian variety).
Hence $H^1(A)$ is unramified (as a representation of $G_{K'}$).
Then $H^2(X)$ is also unramified: 
$PH^2(X)$ is unramified since $PH^2(X) \subset C(PH^2(X)) \cong C(\Lambda_d) \subset \End H^1(A)$
(via the isomorphism above), 
and $c_1(L)$ is Galois-invariant (since $L$ is defined over $K$).

We have $p > 18d + 4 = (L^{\otimes 3})^2 + 4$,
and 
by \cite{Saint-Donat:projectiveK3}*{Theorem 8.3} $L^{\otimes 3}$ is very ample.
Therefore, applying Theorem \ref{thm:maintheorem} and Proposition \ref{prop:quasipolarization} to the pair $(X, L^{\otimes 3})$,
after replacing $K'$ by a further finite extension $K''$ 
we obtain a proper smooth model $\cX$ of $X$ equipped with a quasi-polarization $\cL_3$ extending $L^{\otimes 3}$.
Since the closure $\cL$ of $L$ in $\cX$ satisfies $\cL^{\otimes 3} = \cL_3$,
it follows that $\cL$ is itself a quasi-polarization and extends $L$.
Also the level structure extends naturally,
and hence we obtain a desired morphism $\Spec \cO_{K''} \to M$. 
The commutativity follows easily.

(In this proof we used Theorem \ref{thm:maintheorem} for one $l \neq p$.
If $\charac K = 0$ we also could have used the $p$-adic criterion.)
\end{proof}

\begin{proof}[Proof of Theorem \ref{thm:surjectivity}]
Since the image of $\iota$ is a dense open subset of $\cS$ (since $\iota \rvert_{M^\circ}$ is an open immersion and $\iota$ is \'etale),
it suffices to show that for every closed point $s \in \cS$ 
there exists a morphism $\Spec \cO_K \to \cS$ from a discrete valuation ring
taking the closed point to $s$ and the generic point into $\Image \iota \rvert_{M^\circ}$.
Considering (Zariski-)locally, we may assume $\cS = \Spec A$, $A$ a Noetherian integral domain, 
$\Image \iota \rvert_{M^\circ} \supset \Spec A[1/f]$, $f \neq 0$,  
and $s$ corresponds to a prime $\fp \subset A$, 
and what we want to show is that $A$ admits a prime $\fq \subset \fp$ with $f \not\in \fq$ and $\height \fq = \height \fp - 1$. 
If $\height \fp = 1$ then we can take $\fq = (0)$.
If $\height \fp \geq 2$, 
then there exists infinitely many prime ideals of height $1$
and only finitely many can contain $f$, 
hence we can take $\fq \subset \fp$ with $f \not\in \fq$ and $\height \fq = 1$. Now consider $A/\fq$ and use induction on the dimension.
\end{proof}

\begin{rem}
%
%
As Keerthi Madapusi Pera explained to me,
using the Kuga--Satake abelian variety $A$ and the isomorphisms of cohomology groups, 
we can show that the $l$-adic potential good reduction criteria for primes $l \neq p$ are all equivalent to each other,
and if $\charac K = 0$ also to the $p$-adic criterion.
(These equivalences does not need the assumption on the degree of $L$.)

It suffices to show that 
the conditions that $H^2_\et(X_{K'^\sep}, \bQ_l)$ are unramified (as representations of $G_{K'}$)
are equivalent for all $l \neq p$,
and if $\charac K = 0$ also to the condition that $H^2_\et(X_{K'^\sep}, \bQ_p)$ is crystalline.
Using the construction of $A$, we can reduce to the corresponding equivalence for $H^1$'s of $A$,
which follows from the ($l$-adic and $p$-adic) good reduction criteria for abelian varieties.
\end{rem}

\begin{cor} \label{cor:mainthm-degree2}
For each $d$ there exists a (non-explicit) constant $C$ satisfying the following property:
For any $(X,L)$ as in Theorem \ref{thm:maintheorem} with $L^2 = 2d$,
if an additional condition $p > 18d + 4 = 9L^2 + 4$ is satisfied,
then the extension $K'/K$ in the theorem can be taken to be of degree dividing $C$.
\end{cor}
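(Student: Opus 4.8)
The plan is to bound the extension by routing the entire argument through the period morphism $\iota_{\bK}$ and the integral canonical model $\cS_\bK(\Lambda_{d})_{(p)}$, so that each auxiliary field extension is the pullback of a fixed finite-type morphism and hence of degree bounded independently of $X$ and of $p$. The first point is to fix the level away from $p$ once and for all: choose a full level-$N$ subgroup $\bK^p \subset \bK_{\Lambda_{d}}^p$ for a single integer $N \geq 3$ whose prime factors are all $\leq 18d+4$. Then $N$ is prime to every admissible $p$, the group $\bK = \bK^p \bK_{\Lambda_{d},p}$ is neat and small enough that $M_{2d,\bK,\bZ_{(p)}}$ is an algebraic space and $\cS_\bK(\Lambda_{d})_{(p)}$ is a scheme, and the index $[\bK_{\Lambda_{d}}:\bK]=[\bK_{\Lambda_{d}}^p:\bK^p]$ is finite and independent of $p$. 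Crucially, $\iota_{\bK}$ and the GSpin covering $\tilde\cS \to \cS_\bK$ are base changes of fixed morphisms over $\bZ[1/2N]$, which is what will make the resulting degree bounds uniform in $p$.

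Next, given $(X,L)$ with $L^2 = 2d$ and, say, $H^2_\et(X_{\overline K},\bQ_l)$ unramified for one $l \neq p$ (the crystalline hypothesis is handled in the same way through the $p$-adic realization), I would first acquire a $\bK^p$-level structure at the cost of a bounded extension. Over $\overline K$ the prime-to-$p$ lattice $R^2 f_* \hat\bZ^p(1)$ is isometric to $\Lambda \otimes \hat\bZ^p$ sending $c_1(\xi)$ to $e-df$, so the $\overline K$-points of $I^p/\bK^p$ form a torsor under the finite group $\bK_{\Lambda_{d}}^p/\bK^p$. A $G_K$-orbit therefore has length dividing $[\bK_{\Lambda_{d}}:\bK]$, and choosing one point produces a level structure, and hence a point $\Spec K_1 \to M_{2d,\bK}$, over an extension $K_1/K$ of degree dividing $[\bK_{\Lambda_{d}}:\bK]$. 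This extension need not be unramified, but only its degree matters here.

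I would then pass to the Kuga--Satake side to extend the period point across the closed point. Lifting $\Spec K_1 \to \cS_\bK$ along the finite \'etale map $\tilde\cS \to \cS_\bK$ costs an extension $K_2/K_1$ of degree dividing $\deg(\tilde\cS \to \cS_\bK)$ and produces the Kuga--Satake abelian variety $A/K_2$ with $H^1_\et(A) \cong C(PH^2_\et(X)(1))$. Since $H^2_\et(X)$ stays unramified over $K_2$, the single-$l$ representation $H^1_\et(A,\bQ_l)$ is unramified, so by the N\'eron--Ogg--Shafarevich criterion $A$ has good reduction over $K_2$; consequently the induced point of the Siegel model extends over $\cO_{K_2}$, and by finiteness of $\tilde\cS$ over the Siegel model the point $\Spec K_2 \to \tilde\cS$ extends to $\Spec \cO_{K_2} \to \tilde\cS$, yielding $\Spec \cO_{K_2} \to \cS_\bK$. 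The closed point of this map lies in $\iota_{\bK}(M_{2d,\bK})$ by the surjectivity of Theorem \ref{thm:surjectivity}. Feeding the square $(\Spec K_2 \to M_{2d,\bK},\ \Spec \cO_{K_2} \to \cS_\bK)$ into Proposition \ref{prop:extension} yields, after one further unramified extension $K_3/K_2$, a point $\Spec \cO_{K_3} \to M_{2d,\bK}$ whose associated family is the desired smooth proper algebraic-space model of $X_{K_3}$. Setting $C := [\bK_{\Lambda_{d}}:\bK]\cdot \deg(\tilde\cS \to \cS_\bK)\cdot D$, the composite $K_3/K$ has degree dividing $C$, provided $[K_3:K_2]$ divides some fixed $D$.

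The main obstacle is precisely this last bound: that the extension produced by Proposition \ref{prop:extension} has degree bounded by a constant $D = D(d)$ uniform in $p$. Its degree is the residue degree of the point of $M_{2d,\bK}$ chosen in the fiber of $\iota_{\bK}$ over the closed point, so it suffices to bound the residue degrees occurring in the fibers of the fixed \'etale morphism $\iota_{\bK}$. Here I would invoke Zariski's main theorem for algebraic spaces: $\iota_{\bK}$ is separated, of finite type, and quasi-finite (being \'etale), so it factors as an open immersion $M_{2d,\bK}\hookrightarrow \overline{M}$ followed by a finite morphism $\overline{M}\to \cS_\bK$; as $\cS_\bK$ is a scheme, $\overline{M}$ is a scheme finite over it, and every fiber of $\iota_{\bK}$ has residue degrees bounded by the generic degree of $\overline{M}\to \cS_\bK$. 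That generic degree is computed on the $\bQ$-fiber over $\bZ[1/2N]$ and hence furnishes a single $D$ valid for all $p > 18d+4$. The non-effectivity of Zariski's main theorem is exactly why $C$ is asserted only to exist and not to be explicit.
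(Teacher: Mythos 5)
Your proposal tracks the paper's proof up through the appeal to Proposition \ref{prop:extension}: fixing a prime-to-$p$ level once and for all, acquiring a level structure over an extension of degree dividing $[\bK_{\Lambda_d}:\bK]$, passing to the Kuga--Satake abelian variety, deducing its good reduction, and extending the period point over $\cO_{K_2}$. The gap is concentrated in the last step, where you bound the degree of the extension that Proposition \ref{prop:extension} produces. Zariski's main theorem in the form you invoke requires $\iota_{\bK}$ to be \emph{separated}, and it is not: the moduli stack of \emph{quasi}-polarized K3 surfaces is non-separated (the example of Section \ref{subsec:example-unram} exhibits two distinct smooth quasi-polarized models $\cX'_+$, $\cX'_-$ of a single K3 surface over a single $\cO_{K'}$, which is exactly a failure of the valuative criterion), while $\cS_\bK(\Lambda_d)_{(p)}$ is a separated scheme, so $\iota_\bK$ cannot be separated. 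A quick sanity check confirms the step must fail: if the factorization $M_{2d,\bK}\hookrightarrow \overline{M}\to \cS_\bK(\Lambda_d)_{(p)}$ existed with $\overline{M}$ finite over the scheme $\cS_\bK(\Lambda_d)_{(p)}$, then $\overline{M}$ and hence its open subspace $M_{2d,\bK}$ would be schemes --- but the paper explicitly records (in the remark immediately following this corollary) that it does not know whether $M_{2d,\bK}$ is a scheme, and that if it were, no extension beyond the level-structure one would be needed at all. A second, independent problem is your identification of $[K_3:K_2]$ with ``the residue degree of the point chosen in the fiber of $\iota_\bK$'': the extension in Proposition \ref{prop:extension} is manufactured by Theorem \ref{thm:maintheorem} (Saito's semistable reduction, the MMP, Artin's resolution), whose degree has no a priori relation to fibers of $\iota_\bK$; and because $M_{2d,\bK}$ is only an algebraic space you cannot simply lift $\Spec \cO_{K_2}\to\cS_\bK(\Lambda_d)_{(p)}$ through a local ring of $M_{2d,\bK}$ at a fiber point of bounded residue degree.

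The paper closes exactly this gap by a different device: it fixes once and for all an \'etale covering $M'\to M_{2d,\bK}$ by a \emph{scheme} and sets $C_1=\lcm$ of the degrees of the components of $M'$ over their images. After the (a priori unboundedly large) $\cO_{K_3}$-point of $M_{2d,\bK}$ has been produced, it forms the scheme $M''=M'\times_{M_{2d,\bK}}\Spec\cO_{K_3}$, takes a point $z$ over the closed point, obtains a DVR $\cO_{M'',z}$ with fraction field $K_4$, observes that $\Spec K_4\to M'$ factors through a component $\Spec K_5$ of $M'\times_{M_{2d,\bK},x}\Spec K_1$ with $[K_5:K_1]\mid C_1$, and uses an integrality argument ($\Spec K_4\to M'$ factors through both $\Spec\cO_{K_4}$ and $\Spec K_5$, hence through $\Spec\cO_{K_5}$) to descend to an $\cO_{K_5}$-point of $M_{2d,\bK}$ compatible with $x$. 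Some descent through a scheme covering of this kind is needed; the non-effectivity of $C$ comes from the choice of $M'$, not from Zariski's main theorem.
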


\begin{proof}
Fix $d$.
Let $\bK \subset \SO(\Lambda_{d})(\hat \bZ)$ be the group 
\[
 \bK = \bK(3) = \{ g \in \SO(\Lambda_{d})(\hat \bZ) : g \equiv 1 \pmod 3 \} \cap \bK_{\Lambda_d}.
\]
Then $\bK$ is admissible, prime to $p$ if $p \neq 3$, 
and neat (since $1 + 3 M(N, \bZ_3)$ is torsion-free), 
and we can show as in \cite{Maulik:supersingular}*{Proposition 2.8} that $M_{2d, \bK, \bZ[1/6d]}$ is an algebraic space.
We fix an \'etale covering $M'_{\bZ[1/6d]} \to M_{2d, \bK, \bZ[1/6d]}$ by a scheme.
Let $n_1, \ldots, n_k$ be the degrees of the connected components of $M'_{\bZ[1/6d]}$ over their images, 
and let $C_1 = \lcm\{1, 2, \ldots, \max\{ n_1, \ldots, n_k \} \}$.
Let $C_2 = \lvert \GL(21, \bF_3) \rvert$.
We prove that, given $(X, L)$ over $K$ as in Theorem \ref{thm:maintheorem}
with $L^2 = 2d$ satisfying an additional assumption $p > 18d + 4$ (hence in particular $p$ does not divide $6d$), 
$X$ has good reduction over an extension of $K$ of degree dividing $C = C_1 C_2$
(note that this $C$ depends only on $d$).

Take an extension $K_1/K$ such that $X_{K_1}$ admits a $\bK^p$-level structure
(this extension can be taken to be of degree dividing $[\bK_{\Lambda_d} : \bK]$, hence dividing $C_2$),
so that we obtain a morphism $x \colon \Spec K_1 \to M_\bK = M_{2d, \bK, \bZ_{(p)}}$.

Lifting $\iota(x) \colon \Spec K_1 \to \cS_{\bK}(\Lambda_{d})_{(p)}$ to a point of $\tilde \cS_{\tilde \bK}(\Lambda_{d})_{(p)}$
(after replacing $K_1$ by a finite extension $K_2$),
we obtain the Kuga--Satake abelian variety $A$ of $X$.
Since $H^1(A_{\overline K}, \bQ_l) = C(PH^2(X_{\overline K}, \bQ_l))$ is unramified/crystalline
 (since $H^2(X_{\overline K}, \bQ_l)$ is so),
$A$ has good reduction,
and the model of $A$ over $\cO_{K_2}$ is the pullback of the universal abelian scheme.
Since $\tilde \cS_{\tilde \bK}(\Lambda_d)_{(p)}$ is finite over the integral model of the GSp Shimura variety,
this gives a $\cO_{K_2}$-valued point of $\tilde \cS_{\tilde \bK}(\Lambda_d)_{(p)}$
and hence a $\cO_{K_2}$-valued point of $\cS_\bK(\Lambda_d)_{(p)}$.
Applying Proposition \ref{prop:extension},
for some finite extension $K_3/K_2$ 
we obtain a $\cO_{K_3}$-valued point of $M_\bK$
compatible with the $K_1$-valued point $x$. 


Put $M' = M'_{\bZ[1/6d]} \otimes \bZ_{(p)}$.
Put $M'' = M' \times_{M_{\bK}} \Spec \cO_{K_3}$ (this is a scheme).
Note that $M'' \to \Spec \cO_{K_3}$ is \'etale and surjective.
Take a point $z \in M''$ mapping to the closed point of $\Spec \cO_{K_3}$. 
Then the morphism $\Spec \cO_{M'',z} \to \Spec \cO_{K_3}$ is \'etale and surjective.
The tensor product $\cO_{M'', z} \otimes_{\cO_{K_3}} K_3$ is a finite product of finite separable extensions of $K_3$.
Take one of them, say $K_4$.
Then the morphism $\Spec K_4 \to \Spec \cO_{M'', z}$ 
factors through $\Spec \cO_{K_4} \to \Spec \cO_{M'', z}$ (by the assumption on $z$).
Also, the morphism $\Spec K_4 \to M'$ factors through $M' \times_{{M_\bK},x} \Spec K_1$
and hence through a component $\Spec K_5$ of $M' \times_{{M_\bK},x} \Spec K_1$.
Since the morphism $\Spec K_4 \to M'$ \emph{of schemes} factors through both $\Spec \cO_{K_4}$ and $\Spec K_5$,
it follows (from an elementary set-theoretic argument on the corresponding rings)
that the morphism factors through $\Spec \cO_{K_5}$.
This gives a $\cO_{K_5}$-valued point of $M_{\bK}$ compatible with $x$, 
and the degree of $K_5/K_1$ divides $C_1$.

(In the diagram below $\bullet$'s are the fibered products.)
\[ \small 
\xymatrix{
&& \Spec \cO_{K_5} \ar@{-->}[r] & M' \ar[rr]^{\text{\'etale}}_{\text{surj.}} && M_{\bK} \\
&  \Spec K_5 \ar@{_{(}-->}[r] \ar[ru] & \bullet \ar[rr] \ar[ur] && \Spec K_1 \ar[ru]^x & \\
&& \Spec \cO_{M'',z} \ar[r] & M'' \ar[uu] \ar[rr] && \Spec \cO_{K_3} \ar[uu] \\
\bullet \ar[urr] \ar[rrr] &&& \Spec K_3 \ar@{_{(}->}[r] \ar[rru] & \bullet \ar[uu] \ar[ru] & \\
& \Spec \cO_{K_4} \ar@{-->}[uur] \ar@{-->}[uuuur] &&&& \\
\Spec K_4 \ar@{^{(}->}[uu] \ar[ur] \ar@{-->}[uuuur] & &&&&
}
\]
\end{proof}

\begin{rem}
If $M_\bK = M_{2d, \bK(3), \bZ_{(p)}}$ is already a scheme (not merely an algebraic space),
then it follows immediately that $M_\bK$ admits a $\Spec \cO_{K_1}$-valued point, 
we obtain an explicit bound $C = C_2 = \lvert \GL(21, \bF_3) \rvert$ independent of $d$, 
and moreover we can take the field extension to be unramified.
But we do not know whether $M_\bK$ is a scheme.
\end{rem}

\section{Some counterexamples} \label{sec:counterexample}

In this section 
we construct two explicit examples.

In Section \ref{subsec:example-algsp} we construct a K3 surface 
which has good reduction with an algebraic space model but not with a scheme model.
In Section \ref{subsec:example-unram} we construct a K3 surface
which has good reduction with a scheme model only after 
a base field extension.

\subsection{A sufficient condition for bad reduction}

We introduce a sufficient condition for a K3 surface not to have good reduction with a scheme model.

\begin{prop} \label{prop:badreduction}
Let $\cX$ be an irreducible projective scheme over $\cO_K$.
Assume that the generic fiber $X_K$ is a K3 surface 
and that the special fiber has at least one rational double point and has no other singularities.
If $X_K$ has Picard number one,
then $X_K$ does not have good reduction with a scheme model.
If furthermore $X_K$ has geometric Picard number one (i.e., $X_{\overline K}$ has Picard number one), 
then $X_K$ does not have potential good reduction with a scheme model.
\end{prop}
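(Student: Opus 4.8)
The plan is to argue by contradiction: assuming a smooth proper scheme model exists, I will produce a complete rational curve inside it that has degree zero against \emph{every} line bundle, and then invoke the fact that such a curve cannot sit inside a scheme. First I would suppose $X_K$ has good reduction with a smooth proper scheme model $\cX'$ over $\cO_K$; write $X'_k$ for its (smooth, irreducible) special fiber $Y := X'_k$ and $H$ for the ample generator of $\NS(X_K)$. Both $\cX$ and $\cX'$ are relative minimal models of $X_K$ over $\cO_K$: the generic fibers are K3 surfaces and the special fibers carry only canonical (rational double point) singularities, so $K_{\cX/\cO_K} = K_{\cX'/\cO_K} = 0$. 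Two relative minimal models are isomorphic in codimension one (this is where I would invoke the minimal model theory of Kawamata \cite{Kawamata:mixed3fold} used above, or equivalently the uniqueness of the smooth model); restricting such an isomorphism to the special fibers identifies $Y$ with the minimal resolution $\widetilde{X_k}$ of the rational double point surface $X_k$. Since $X_k$ has at least one rational double point, this furnishes a $(-2)$-curve $\bar E \subset Y$ lying over that singular point.

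Next I would extract the key orthogonality. Let $\cA$ be a relatively ample line bundle on the projective scheme $\cX$; its restriction to $X_K$ is ample, hence equals $cH$ for some $c>0$ by the Picard number one hypothesis, while its restriction to $X_k$ pulls back on $\widetilde{X_k} \isomto Y$ to a nef class orthogonal to $\bar E$. Since $\cX$ and $\cX'$ agree in codimension one, the specialization $H_Y \in \NS(Y)$ of $H$ (the restriction to $Y$ of the closure of $H$ in $\cX'$) is identified with $c^{-1}$ times this pullback, so $H_Y \cdot \bar E = 0$. For an arbitrary line bundle $L'$ on $\cX'$, its generic restriction is some multiple $mH$, and $L'$ differs from the closure of $mH$ by a divisor supported on the irreducible fiber $Y = \divisor(\pi)$, which is principal and hence restricts trivially to $Y$; therefore $L'|_Y = m H_Y$ and $L' \cdot \bar E = m\,(H_Y \cdot \bar E) = 0$. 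In other words, every line bundle on the smooth model $\cX'$ has degree zero on $\bar E$.

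The main obstacle, and the heart of the matter, is the final step: deducing that $\cX'$ cannot be a scheme. The curve $\bar E \cong \bP^1$ is a complete curve in the regular threefold $\cX'$ on which no line bundle is positive; in a scheme such a curve would lie in a quasi-projective open neighborhood and hence would meet some Cartier divisor, giving a line bundle of nonzero degree on it. Thus $\cX'$ fails to be a scheme near $\bar E$, contradicting the assumption. This is precisely the scheme-versus-algebraic-space dichotomy exploited throughout the paper: the $(-2)$-curve can be contracted, and the resulting singularity resolved, only in the category of algebraic spaces (Artin \cite{Artin:brieskorn}), and Picard number one is exactly the input that forbids a line bundle from cutting $\bar E$ and thereby representing the modification by schemes.

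For the potential statement I would run the identical argument after the finite extension $K'/K$ over which a smooth scheme model is assumed to exist, base-changing $\cX$ and passing to a relative minimal model over $\cO_{K'}$. The geometric Picard number one hypothesis guarantees that $\NS(X_{\overline K}) \otimes \bQ = \bQ\,H$ is preserved under the extension, so the step writing the generic restriction of a line bundle as a multiple of $H$ survives verbatim, while the $(-2)$-curve arising from the rational double point persists through base change and the minimal model, yielding the same contradiction.
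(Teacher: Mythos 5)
Your strategy is genuinely different from the paper's, and as written it has two real gaps. The first is in the concluding step. You deduce that $\cX'$ cannot be a scheme from the claim that a complete curve in a scheme ``would lie in a quasi-projective open neighborhood.'' That is not a standard fact, you give no argument for it, and it is not how the obstruction works. The correct argument (essentially Artin's \cite{Artin:brieskorn}, and the one the paper uses) is: the irreducible complete curve $\bar E$ meets \emph{some} affine open $U \subset \cX'$; since $\cX'$ is regular and irreducible, the complement $\cX' \setminus U$ is a divisor; by your Picard computation every divisor on $\cX'$ has degree $0$ on $\bar E$, so $\bar E$ is disjoint from $\cX' \setminus U$ and is therefore a complete curve contained in the affine scheme $U$, which is absurd. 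So the conclusion you want is true, but the reason you give for it is not a valid step and needs to be replaced by this affine-open/divisor argument.

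The second gap is in the production of $\bar E$ and of the orthogonality $L' \cdot \bar E = 0$. You invoke ``two relative minimal models are isomorphic in codimension one'' for $\cX$ and $\cX'$. To apply anything of that kind you must first establish that $\cX$ is normal with terminal (compound Du Val) Gorenstein singularities and $K_{\cX/\cO_K}=0$ --- none of which is a stated hypothesis, though it can be extracted from ``the special fiber has only rational double points'' with some work --- and then you need uniqueness of minimal models up to codimension one over a mixed-characteristic DVR, which requires a common resolution of the threefold $\cX$ and, in the form of \cite{Kawamata:mixed3fold}, the restriction $p \geq 5$. The proposition imposes no condition on $p$, and the models $\cX$ it is applied to are typically not $\bQ$-factorial, so this is a substantial imported input rather than a routine citation. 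Even granting the codimension-one isomorphism, identifying the specialization $H_Y$ computed in $\cX'$ with the pullback of $\cA\rvert_{X_s}$ under $X'_s \cong \widetilde{X_s}$ glosses over the fact that the closure of $H$ in $\cX$ need not be Cartier at the singular points. The paper's proof avoids all of this machinery: it takes the closure $\cD'$ of the ample generator in $\cX'$ and splits into two cases --- if $\cD'\rvert_{X'_s}$ is not ample, Nakai--Moishezon produces a curve of non-positive degree against $\cD'$ and the affine-open argument above gives the contradiction directly; if $\cD'\rvert_{X'_s}$ is ample, the Matsusaka--Mumford rigidity theorem (Theorem \ref{thm:MMmodified}, \cite{Matsusaka--Mumford}) forces the special fibers of $\cX$ and $\cX'$ to be isomorphic, contradicting smooth versus singular. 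Your $(-2)$-curve picture is the right geometric intuition for why the resolution exists only as an algebraic space, but turning it into a proof at the stated level of generality requires either the paper's route or considerably more foundational justification than you supply.
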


\begin{proof}
We prove the former assertion (then the latter follows immediately). 

Assume to the contrary that there exists a \emph{scheme} $\cX'$ 
which is a proper smooth model of $X_K$ (thus achieving a good reduction).

Let $D$ be an effective divisor which generates $\Pic(X_K)$, 
$\cD'$ its closure (as Weil divisor) in $\cX'$,
and $D'_k$ the restriction of $\cD'$ to $X'_k$.
Then either $D'_k$ is ample or not. 
We show that each assumption leads to a contradiction.

First we assume that $D'_k$ is not ample.
By the Nakai--Moishezon criterion, 
this means 
either $D_k'^2 \leq 0$ or $D'_k \cdot C \leq 0$ for some curve $C \subset X'_k$.
Since $D_k'^2 = D'^2 > 0$, the former cannot occur. 
So take an irreducible curve $C \subset X'_k$ with $D'_k \cdot C \leq 0$.
(The following argument is essentially given in \cite{Artin:brieskorn}*{first page}.)
Take an affine open subset $U \subset \cX'$ with $C \cap U \neq \emptyset$.
Since $\cX'$ is regular, the complement $\cX' \setminus U$ is a divisor, 
and its components $\cZ_i$ are all linearly equivalent to some positive multiple of $\cD'$ 
(since $X$ has Picard number one and the special fiber is integral).
We have $\cZ_i \cdot C \leq 0$ by linear equivalence, and $\cZ_i \cdot C \geq 0$ since $\cZ_i$ does not contain $C$.
Hence we have $(\cX' \setminus U) \cdot C = 0$.
Then the affine scheme $U$ contains a complete curve $C$, which is absurd.

Now we assume that $D'_k$ is ample.
By \cite{EGA3-1}*{Th\'eor\`eme 4.7.1} $\cD'$ is a relatively ample divisor of $\cX$ over $\cO_K$.
Then $(\cX', \cD')$ and $(\cX, \cD)$ 
(where $\cD$ is the closure of $D$ in $\cX$)
have isomorphic generic fibers (as polarized varieties)
but non-isomorphic special fibers (as varieties),
as $X'_k$ is smooth and $X_k$ is singular.
The following theorem shows that this is impossible.
\end{proof}

\begin{thm}[cf.\ Matsusaka--Mumford \cite{Matsusaka--Mumford}*{Theorem 2}] \label{thm:MMmodified}
Let $\cX_1$ and $\cX_2$ be irreducible schemes proper over $\cO_K$, 
$\cL_1$ and $\cL_2$ ample invertible sheaves on $\cX_1$ and $\cX_2$ respectively, 
and $\cT \subset \cX_1 \times_{\cO_K} \cX_2$ an irreducible closed subscheme, flat over $\cO_K$,  
such that its generic fiber $T_K$ gives an isomorphism of polarized varieties $(X_{1 K}, L_{1 K})$ and  $(X_{2 K}, L_{2 K})$. 
Assume $\cX_1$ is smooth over $\cO_K$, 
and $\cX_2$ has smooth generic fiber and normal irreducible special fiber.
Assume further that $X_{2 k}$ is not ruled.
Then an irreducible component of $T_k$ gives an isomorphism between $X_{1 k}$ and $X_{2 k}$.

\end{thm}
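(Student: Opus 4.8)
The plan is to adapt the original Matsusaka--Mumford argument, whose conclusion in the smooth-projective case is that a correspondence inducing an isomorphism of polarized generic fibers specializes to an isomorphism of the special fibers, provided the special fibers are not ruled. The statement I must prove weakens the hypotheses on $\cX_2$: instead of smoothness I only assume that $X_{2s}$ is normal and irreducible (this is exactly what the application in Proposition \ref{prop:badreduction} provides, where $X_{2s}$ has rational double points). So the real content is to check that the Matsusaka--Mumford proof survives when $\cX_2$ is merely normal with normal special fiber, using the non-ruledness to rule out degenerate behavior of the specialized correspondence.

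First I would set up the correspondence. The subscheme $\cT \subset \cX_1 \times_{\cO_K} \cX_2$, flat over $\cO_K$ with generic fiber $T_\eta$ the graph of an isomorphism, has special fiber $T_s$ which is a cycle of dimension $2$ on $X_{1s} \times_k X_{2s}$. Because $T_\eta$ is a graph, the two projections $\pr_1 \colon \cT \to \cX_1$ and $\pr_2 \colon \cT \to \cX_2$ are birational, hence each $\pr_{i,s}$ has degree $1$ onto a component of $X_{is}$ (after choosing the appropriate component $T_s^{(0)}$ of $T_s$ dominating both factors; since $X_{1s}$ and $X_{2s}$ are irreducible, one such component exists). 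The polarization condition $T_\eta^*L_{2\eta} \cong L_{1\eta}$ specializes to a numerical relation between $(\pr_{1,s})_* \pr_{2,s}^* L_{2s}$ and $L_{1s}$ on the special fibers, which forces the component $T_s^{(0)}$ to be birational to both $X_{1s}$ and $X_{2s}$ and to respect the polarizations numerically.

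The heart of the argument, which I expect to be the main obstacle, is to upgrade these birational maps to a genuine \emph{isomorphism} $X_{1s} \isomto X_{2s}$. The projection $\pr_{2,s} \colon T_s^{(0)} \to X_{2s}$ is birational with $X_{2s}$ normal, and $\pr_{1,s} \colon T_s^{(0)} \to X_{1s}$ is birational with $X_{1s}$ smooth; so it suffices to show that the induced birational map $\phi = \pr_{2,s} \circ \pr_{1,s}^{-1} \colon X_{1s} \rationalto X_{2s}$ is everywhere defined and has everywhere-defined inverse. Since $X_{1s}$ is smooth and $X_{2s}$ is normal projective, $\phi$ is defined outside a closed subset of codimension $\geq 2$; the indeterminacy locus, if nonempty, is resolved by blowing up, and any curve contracted by a resolution would be covered by rational curves, producing rational curves in $X_{2s}$. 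This is precisely where the \textbf{non-ruledness} of $X_{2s}$ enters: a normal non-ruled surface cannot be dominated by a family of rational curves through its generic point, and more to the point the resolution of $\phi$ cannot contract a divisor to a point of $X_{2s}$ without exhibiting $X_{2s}$ as swept out by rational curves in a way contradicting non-ruledness. The compatibility with the ample polarizations $L_{1s}, L_{2s}$ pins down the numerical type and forbids the contraction of any curve by either projection, forcing both $\pr_{1,s}|_{T_s^{(0)}}$ and $\pr_{2,s}|_{T_s^{(0)}}$ to be finite birational, hence isomorphisms onto the normal targets by Zariski's main theorem.

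I would organize the non-ruledness step carefully, as it is the only place the original smoothness hypothesis on $\cX_2$ is replaced by normality plus a geometric condition: the argument shows that $\pr_{2,s}$ is quasi-finite (no positive-dimensional fiber), because a positive-dimensional fiber over a point of $X_{2s}$ together with the birationality of $\pr_{1,s}$ and the ampleness of $L_{1s}$ would produce a curve in $X_{1s}$ mapping to a point of $X_{2s}$, which via the polarization compatibility gives a rational curve filling up $X_{2s}$ and contradicts non-ruledness. Combined with properness this makes $\pr_{2,s}|_{T_s^{(0)}}$ finite, and finite plus birational onto a normal variety is an isomorphism; symmetrically for $\pr_{1,s}$. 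Composing yields the desired isomorphism $X_{1s} \isomto X_{2s}$, carried by the component $T_s^{(0)}$ of $T_s$. The routine verifications --- flatness bookkeeping, the specialization of intersection numbers, and the standard reduction to the case where $T_s^{(0)}$ dominates both factors --- I would cite directly from \cite{Matsusaka--Mumford}, since the only genuinely new input is the replacement of smoothness by normality, handled by Zariski's main theorem together with the non-ruledness hypothesis.
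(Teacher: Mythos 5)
Your overall strategy matches the paper's (and Matsusaka--Mumford's): isolate a component of $T_s$ that is a birational correspondence between $X_{1s}$ and $X_{2s}$, then use the ample polarizations and normality to upgrade it to an isomorphism via Zariski's main theorem. But there is a genuine gap at the first step. You assert that a component $T_s^{(0)}$ of $T_s$ dominating \emph{both} factors exists ``since $X_{1s}$ and $X_{2s}$ are irreducible.'' That is not a justification: compatibility of specialization with push-forward gives $\pr_{1*}(T_s) = X_{1s}$ and $\pr_{2*}(T_s) = X_{2s}$ as cycles, so there is a unique component mapping with degree one onto $X_{1s}$ and a unique one mapping with degree one onto $X_{2s}$, but nothing formal forces these two components to coincide --- a priori the component dominating $X_{2s}$ could project onto a curve or a point of $X_{1s}$. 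Ruling this out is exactly the content of Matsusaka--Mumford's Theorem~1 (Theorem~\ref{thm:MM1} in the paper), and it is where the non-ruledness hypothesis is actually consumed: by Abhyankar's result on residue field extensions (this is where smoothness of $\cX_1$ and normality of $\cX_2$ enter, to guarantee the relevant local rings are regular), the component $T''$ dominating $X_{2s}$ is birationally equivalent to $\pr_1(T'') \times \bP^n$; since $T''$ is birational to the non-ruled $X_{2s}$, one gets $n = 0$ and hence $\pr_1(T'') = X_{1s}$. Your proposal skips this entirely.

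Relatedly, you deploy non-ruledness in the wrong place in the second step. The claim that a positive-dimensional fiber of $\pr_{2,s}$ would ``give a rational curve filling up $X_{2s}$'' is not the mechanism --- a contracted curve sweeps out nothing. What kills contracted curves is the ampleness of \emph{both} polarizations: if $C \subset T''$ is contracted by $\pr_{2,s}$ but not by $\pr_{1,s}$, then $(\pr_{1,s}^* L_{1s} \cdot C) > 0$ while $(\pr_{2,s}^* L_{2s} \cdot C) = 0$, contradicting the numerical identity of the two pullbacks obtained by specializing $T_\eta^* L_{2\eta} \cong L_{1\eta}$ (and one must also control the contribution of the residual components, which is why Theorem~\ref{thm:MM1} produces the decomposition $T_s = T'' + T^*$ with $\pr_{i*}T^* = 0$). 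With those two points repaired, your finite-plus-birational-onto-normal-plus-ZMT conclusion is exactly the Matsusaka--Mumford/Koll\'ar endgame that the paper cites rather than rewrites.
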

If we further assume that $\cX_2$ is smooth over $\cO_K$, this is \cite{Matsusaka--Mumford}*{Theorem 2}.
However their proof works under this weaker assumption, as we check later in Section \ref{sec:MM}.

We also need the following proposition
(this is merely a restatement of the argument on Artin's resolution in Section \ref{sec:proof}).

\begin{prop} \label{prop:algspresolution}
Let $\cX$ be an irreducible proper scheme over $\cO_K$.
Assume that the generic fiber $X_K$ is a smooth surface
and that the special fiber is smooth except for (finitely many) rational double points.
Then
the surface $X_K$ has potential good reduction with algebraic space model. 
\end{prop}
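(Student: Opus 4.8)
The plan is to reduce Proposition \ref{prop:algspresolution} to the construction already carried out in the proof of the main theorem (Section \ref{sec:proof}), since the hypotheses here are essentially the geometric situation reached there after the minimal model program. First I would observe that $\cX$ is already a model whose special fiber is smooth away from finitely many rational double points, so there is no need to run the fibration construction of cases (a)--(c) or the minimal model program: we are handed an object playing the role of $\cX''$ directly. The only work is to resolve the rational double points in the category of algebraic spaces and to verify that the resulting smooth special fiber is in fact a K3 surface (Type I), so that good reduction holds.

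The key steps, in order, would be the following. First, apply Artin's theorem \cite{Artin:brieskorn}*{Theorem 2} exactly as in Section \ref{sec:proof}: after replacing $K$ by a finite extension $K'$ (and correspondingly replacing $\cX$ by a strictly semistable-away-from-type-(2) model over $\cO_{K'}$ via Saito's log blow-up \cite{Saito:logsmooth}*{Theorem 2.9.2} if needed, though here the special fiber is irreducible so the only singularities are the rational double points themselves), we obtain an algebraic space $\cX'''$ with a proper morphism $\phi\colon \cX''' \to \cX$ which is an isomorphism outside the rational double points and which restricts to the minimal desingularization on the special fiber. Second, I would check that $\cX'''$ is smooth proper over $\cO_{K'}$: its special fiber $X'''_s$ is the minimal resolution of a surface with rational double points, hence smooth, and since there were no other singularities the total space is now regular with smooth special fiber, so it is smooth over $\cO_{K'}$. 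Third, I would identify $X'''_s$ as a smooth K3 surface by the same cohomological argument given in Section \ref{sec:proof}: since resolving rational double points (canonical singularities) leaves the canonical divisor unchanged and does not affect $H^1(-,\cO)$, one gets $\bigwedge^2\Omega^1_{X'''_s/s}$ trivial and $H^1(X'''_s,\cO)=0$, so $X'''_s$ is a smooth K3 surface, i.e.\ the special fiber is of Type I and $\cX'''$ is a smooth proper algebraic space model, giving potential good reduction.

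The main obstacle, such as it is, is conceptual rather than technical: one must make sure the generic fiber of $\cX'''$ is still (isomorphic to) $X_{K'}$, which is immediate because $\phi$ is an isomorphism over the generic fiber, and one must confirm that the smoothness of the total space of $\cX'''$ over $\cO_{K'}$ really follows. Here the point is that $\cX'''$ has at worst strictly semistable singularities by the construction in Section \ref{sec:proof}, but since the special fiber $X'''_s$ turns out to be \emph{irreducible and smooth} (there are no double curves coming from distinct components, the original special fiber being irreducible), the semistable local model degenerates to the smooth one, so $\cX'''$ is actually smooth over $\cO_{K'}$. I expect that verifying this smoothness carefully—that the resolution of an isolated rational double point in the irreducible special fiber of a regular-in-codimension-one model yields a genuinely smooth family, not merely a semistable one—is the only place requiring attention, and it is handled exactly as the Type I case in the main proof.
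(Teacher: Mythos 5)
Your proposal is correct and takes essentially the same route as the paper, which gives no separate proof but states that the proposition ``is merely a restatement of the argument on Artin's resolution in Section \ref{sec:proof}'': after a finite extension of $K$ one applies Artin's simultaneous resolution \cite{Artin:brieskorn}*{Theorem 2} to the rational double points, and the resulting algebraic space is smooth and proper over $\cO_{K'}$ with generic fiber $X_{K'}$. Your third step (identifying the special fiber as a K3 surface via the canonical bundle and $H^1(-,\cO)$) is superfluous for this proposition, which only asserts the existence of a smooth proper model for an arbitrary smooth surface $X_K$, but it does no harm.
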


\subsection{Example: good reduction only with an algebraic space model} \label{subsec:example-algsp}

We construct an explicit example of a K3 surface 
which has potential good reduction with algebraic space models
but not with scheme models.
As we mentioned in the introduction,
this indicates that allowing algebraic spaces is essential when considering reduction of K3 surfaces, 
in contrast to the case of abelian varieties.

Let $p \geq 7$ be a prime and $K = \bQ_p$ (hence $\cO_K = \bZ_p$).
Let $f \in \bZ[x,y,z]$ be a 
homogeneous sextic polynomial satisfying the congruences
\begin{align*}
 f &\equiv 
  2 x^6 + x^4 y^2 + 2 x^3 y^2 z + x^2 y^2 z^2 + x^2 y z^3 + 2 x^2 z^4 \\
   & \qquad
    + x y^4 z + x y^3 z^2 + x y^2 z^3 + 2 x z^5 + 2 y^6 + y^4 z^2 + y^3 z^3 
   \pmod 3, \\
 f &\equiv 
  y^6 + x^4 y^2 + 3 x^2 y^4 + 2 x^5 z + 3 x z^5 + z^6 
   \pmod 5, \;\text{and} \\
 f &\equiv 
  x y z^4 + x^6 + y^6
   \pmod p.
\end{align*}
(Such an $f$ clearly exists from the Chinese remainder theorem.)
Let $\cX$ be the double covering of $\bP^2_{\bZ[1/2]}$ 
defined by the equation $w^2 = f(x,y,z)$ (so that it ramifies at the sextic defined by $f = 0$).
As Elsenhans--Jahnel \cite{Elsenhans--Jahnel}*{Example 5.1.1} showed, 
the  congruences modulo $3$ and $5$ imply 
 that $X_\bQ$ (and hence $X_K$) 
are smooth K3 surfaces and have geometric Picard number one.
On the other hand, from the congruence modulo $p$, 
the special fiber $X_{\bF_p}$ of $\cX_{\cO_K}$ has exactly one rational double point at $x=y=0$.

It follows from Propositions \ref{prop:badreduction} and \ref{prop:algspresolution} that $X_K$ is a desired example.

\subsection{Example: good reduction only after unramified field extension} \label{subsec:example-unram}

We construct a K3 surface over $K$
which has good reduction with a scheme model over the integer ring of some unramified extension of $K$ but not of $K$ itself.
(But there remains the possibility that it has good reduction with an algebraic space model over $\cO_K$ without extension.)
\footnote{We showed in a subsequent paper \cite{Liedtke--Matsumoto}*{Theorem 6.2}
that the example below does not have good reduction over $\cO_K$ even in the category of algebraic spaces.}

This shows another difference between K3 surfaces and abelian varieties, 
as this situation does not occur in the case of abelian variety:
whether an abelian variety has good reduction or not can be completely determined by the action of the inertia group, 
and an unramified extension does not change the inertia group.

The idea of our example is as follows:
the model $\cX$ over $\cO_K$ has isolated singularities in the special fiber, 
which can be resolved by blowing up exactly one of the two curves $\cC_+$ and $\cC_-$, 
but these curves (and the corresponding resolutions and models) are defined only after base change by $K'/K$.

Let $\phi, \Phi \in \bZ[x,y,z,w]$ be the (homogeneous) polynomials
\begin{align*}
 \phi &=
  x^3 - x^2y - x^2z + x^2w - x y^2 - x yz + 2x yw + x z^2 + 2x zw \\
   & \qquad + y^3+ y^2z - y^2w + yz^2 + yzw - yw^2 + z^2w + zw^2 + 2w^3 ,
  \;\text{and}\\
 \Phi &= w \phi + (z^2+xy+yz)(z^2+xy).
\end{align*}
Van Luijk \cite{vanLuijk:K3one}*{proof of Theorem 3.1} proved that 
the K3 surface $Y_2 = (\Phi = 0) \subset \bP^3_{\bF_2}$
has geometric Picard number two (by computing its zeta function), 
and that any quartic surface defined over $\bQ$ by a polynomial congruent to $\Phi$ modulo $2$
is a smooth K3 surface and has geometric Picard number at most two (by using a reduction argument).

Now let $p \geq 5$ be a prime, 
$a$ an integer with $a \not\equiv 0, \frac{27}{16} \pmod p$,
and $b = cp^2$ where $c$ is an integer which is a quadratic nonresidue modulo $p$ and satisfies $c \equiv 1 \pmod 8$.
Let $f \in \bZ[x,y,z,w]$ be a homogeneous cubic polynomial satisfying the congruences
\begin{align*}
 f &\equiv
  \phi \pmod 2,  \;\text{and} \\
 f &\equiv
  x^3+y^3+z^3+aw^3
  \pmod p.
\end{align*}
Let $\cX \subset \bP^3_\bZ = \Proj \bZ[x,y,z,w]$ be the ``quartic surface''
defined by the equation $F = wf + g^2 - b h^2 = 0$,
where 
\[
 g = p z^2 + xy + \frac{p}{2} yz, \quad \text{and} \quad 
 h = \frac12 yz
\]
(note that $g^2 - b h^2$ has integral coefficients although $g$ and $h$ do not).

Let $K = \bQ_p$.
We show that $X_K$ is a desired example.

Since $F \equiv \Phi \pmod 2$,
$X_K$ is a smooth K3 surface and
has geometric Picard number at most two from the above argument.

Over the (unramified) quadratic extension $K' = K(\sqrt{b})$ of $K$, 
we have two distinct irreducible divisors $C_+$ and $C_-$, 
where 
$C_{\pm}$ are respectively defined by $w = g \pm \sqrt{b}\, h = 0$.
Since $C_+^2 < 0$ (since $C_+$ is a smooth rational curve)
but $C_+ \cdot C_- \geq 0$,
the classes of $C_+$ and $C_-$ in the Picard group are linearly independent.
Hence $X_{K'}$ has Picard number two.
Since the action of $\Gal(K'/K)$ on $\Pic(X_{K'})$ is nontrivial
(the nontrivial element takes $C_+$ to $C_-$), 
$X_K$ has Picard number strictly less than two, hence one.

One easily checks that the singular points of 
$X_{\bF_p} = (wf + g^2 = 0) \subset \bP^3_{\bF_p}$ 
are exactly the six points where $w = f = g = 0$
(recall that $f \equiv x^3 + y^3 + z^3 + aw^3$ and $g \equiv xy \pmod p$), 
and that these points are rational double points (of type $A_1$).
We conclude by using Proposition \ref{prop:badreduction}(2) that $X_K$ does not have good reduction.

On the other hand $X_{K'}$ has good reduction.
We construct two smooth proper models $\cX'_+$ and $\cX'_-$.
Let $\cC_+$ and $\cC_-$ be the subschemes of $\cX_{\cO_{K'}}$ defined by $w = g + \sqrt{b}\, h = 0$ and $w = g - \sqrt{b}\, h = 0$ respectively.
Let $\psi_+ \colon \cX'_+ \rightarrow \cX_{\cO_{K'}}$ be the blow-up at $\cC_+ \subset \cX_{\cO_{K'}}$.
Since the divisor $\cC_+$ is Cartier at all the points where at least one of $w$, $f$, $g + \sqrt{b}\, h$, and $g - \sqrt{b}\, h$ is nonzero,
the morphism $\psi_+$ is an isomorphism outside the subscheme $(w=f=g=bh=0)$.
One can easily check that this locus $(w=f=g=bh=0)$
is the set of the six singular points of the special fiber.
Some computation on local equations shows that $\psi_+$ 
is the minimal desingularization on the special fiber
(and is an isomorphism on the generic fiber).
Therefore $\cX'_+$ is a  smooth proper model of $X_{K'}$.
By blowing up $\cC_-$ we similarly obtain $\cX'_-$, which is another smooth proper model.

\subsection{The Matsusaka--Mumford theorem} \label{sec:MM}

We prove Theorem \ref{thm:MMmodified}.
This is a small refinement of the original theorem of Matsusaka--Mumford.

First we show the following theorem.

\begin{thm}[cf. Matsusaka--Mumford \cite{Matsusaka--Mumford}*{Theorem 1}] \label{thm:MM1}
Let $\cX$, $\cY$, and $\cT \subset \cX \times_{\cO_K} \cY$ be irreducible schemes flat over $\cO_K$ 
(of the same relative dimension $n$).
Assume that the fibers of $\cX$ and $\cY$ are integral, 
 that $\cX$ is proper smooth over $\cO_K$, $\cY$ is normal, 
 and that $T_K$ gives a birational correspondence from $X_K$ to $Y_K$.
Assume moreover that $Y_k$ is not ruled.
Then the $n$-cycle $T_k$ admits a (unique) decomposition $T_k = T'' + T^*$ to effective $n$-cycles, 
with $T''$ a birational correspondence from $X_k$ to $Y_k$ 
and with $T^*$ satisfying $\pr_{1*}(T^*) = 0$ and $\pr_{2*}(T^*) = 0$ (where $\pr_i$ are the projections to $X_k$ and $Y_k$).
\end{thm}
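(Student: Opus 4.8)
The plan is to reproduce the argument of Matsusaka--Mumford for their Theorem~1, keeping careful track of where smoothness is genuinely used; as I will point out, it is needed only for $\cX$, so that normality of $\cY$ suffices. First I would check that both projections restrict to proper birational morphisms $\pr_1\colon\cT\to\cX$ and $\pr_2\colon\cT\to\cY$. Properness is automatic since $\cX$ is proper over $\cO_K$ and $\cT$ is closed in $\cX\times_{\cO_K}\cY$. Birationality holds because $\cT$, $\cX$, $\cY$ are irreducible and flat over $\cO_K$ of the same relative dimension $n$, while $T_\eta$ already induces birational maps $T_\eta\to X_\eta$ and $T_\eta\to Y_\eta$ on generic fibres; hence each projection is dominant and generically of degree one. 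Proper birational pushforward onto the integral targets then yields the cycle identities $\pr_{1*}[\cT]=[\cX]$ and $\pr_{2*}[\cT]=[\cY]$.

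Next I would specialize. Since $\cT$, $\cX$, $\cY$ are flat over $\cO_K$, taking the special fibre of a cycle commutes with proper pushforward, and as $X_s$ and $Y_s$ are integral they occur with multiplicity one; thus $\pr_{1*}[T_s]=[X_s]$ and $\pr_{2*}[T_s]=[Y_s]$. Writing $T_s=\sum_i m_iT_i$ over its irreducible components (each an $n$-dimensional prime divisor of the $(n+1)$-dimensional scheme $\cT$), the first identity forces a single component $T''$ with $m=1$ mapping birationally onto $X_s$, every remaining $T_i$ being $\pr_1$-exceptional in the sense that $\dim\pr_1(T_i)<n$; in particular $\pr_{1*}(T_s-T'')=0$. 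Applying $\pr_{2*}$ to this decomposition and comparing with $[Y_s]$, exactly one component must dominate $Y_s$ with degree one. Either that component is $T''$ itself --- in which case $T''$ is birational onto both $X_s$ and $Y_s$, I set $T^*=T_s-T''$, and the two identities give $\pr_{1*}T^*=\pr_{2*}T^*=0$ (the uniqueness of the decomposition following from the characterization of $T''$) --- or it is some $\pr_1$-exceptional component $T'''\neq T''$. The whole theorem reduces to excluding this second, degenerate possibility.

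This exclusion is the main obstacle, and it is exactly where non-ruledness of $Y_s$ and smoothness of $\cX$ enter. In the bad case $T'''$ is an exceptional prime divisor of the proper birational morphism $\pr_1\colon\cT\to\cX$: it has dimension $n$ while $\dim\pr_1(T''')<n$. Since $\cX$ is smooth over $\cO_K$, hence regular, I would invoke the classical fact (valid in all characteristics, and the single place where Matsusaka--Mumford use smoothness) that an exceptional prime divisor of a proper birational morphism onto a regular scheme is ruled, i.e.\ covered by rational curves --- its generic fibre over $\pr_1(T''')$ being rationally connected, so that these curves sweep out all of $T'''$. But $\pr_2$ maps $T'''$ birationally onto $Y_s$, so $Y_s$ would be ruled, contradicting the hypothesis (for the surfaces arising in the application, ``uniruled'' and ``ruled'' coincide). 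Therefore the good case holds: $T''$ gives the required birational correspondence from $X_s$ to $Y_s$, and $T^*=T_s-T''$ has the stated vanishing pushforwards.

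The only point to verify beyond Matsusaka--Mumford's proof is that this chain of reasoning never uses smoothness of $\cY$. Indeed, smoothness of $\cX$ is consumed entirely in the ruledness step applied to $\pr_1$, while $\cY$ appears only through the birational pushforward $\pr_{2*}[\cT]=[\cY]$ and the integrality of $Y_s$, for both of which normality is more than enough. Hence the argument goes through verbatim under the weaker hypothesis.
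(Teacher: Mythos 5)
Your overall strategy coincides with the paper's (and with Matsusaka--Mumford's): specialize the cycle identity, isolate the unique component of $T_s$ of multiplicity one dominating each factor with degree one, and exclude the degenerate case by playing ``an exceptional divisor over a regular base is ruled'' against the non-ruledness of $Y_s$. The gap is in how you justify that key fact, and it is exactly where the proof lives. You gloss ``ruled'' as ``covered by rational curves'' and argue via rational connectedness of the generic fibre of $\pr_1\rvert_{T'''}$; even granting that argument, it only yields that $T'''$, hence $Y_s$, is \emph{uniruled}. Your parenthetical patch --- that uniruled and ruled coincide for the surfaces arising in the application --- is false in characteristic $p$: in Proposition \ref{prop:badreduction} the fibre $Y_s$ is a K3 surface with rational double points, and in positive characteristic a K3 surface can be unirational (hence uniruled) while of course never ruled. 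So as written the contradiction evaporates precisely in the cases the theorem is meant to cover (e.g.\ supersingular reduction). What is actually needed, and what the paper invokes, is Abhyankar's valuation-theoretic result: a divisorial valuation (a DVR) of the common function field dominating the regular local ring $\cO_{\cX,\alpha}$, with $\alpha$ the generic point of $A=\pr_1(T'')$, has residue field a \emph{ruled} extension of $K(A)$, so that $T''$ is birational to $A\times\bP^{m}$ with $m=n-\dim A$. That gives honest ruledness of $Y_s$ whenever $\dim A<n$, in all characteristics.

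This also corrects your accounting of the hypotheses. Normality of $\cY$ is not consumed merely by the pushforward $\pr_{2*}[\cT]=[\cY]$ and the integrality of $Y_s$: it is what makes $\cO_{\cT,\tau''}$ a DVR at the generic point $\tau''$ of the component dominating $Y_s$ (that local ring dominates the DVR $\cO_{\cY,\eta_{Y_s}}$ and has the same fraction field, hence equals it), and this DVR is the required input to Abhyankar's theorem. This is also why the paper anchors the argument on the component dominating $Y_s$ rather than, as you do, on the one dominating $X_s$. A lesser point: the stated hypotheses make $\pr_2\colon\cT\to\cY$ proper (because $\cX$ is proper over $\cO_K$) but not $\pr_1\colon\cT\to\cX$, since properness of $\cY$ over $\cO_K$ is not assumed; so your opening identities $\pr_{1*}[\cT]=[\cX]$ and $\pr_{1*}[T_s]=[X_s]$ are not literally available, whereas the paper's proof only ever pushes forward along $\pr_2$.
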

\begin{proof}
By the compatibility of specialization and push-forward we have $\pr_{2 *}(T_k) = Y_k$, 
hence some (unique) component $T''$ of $T_k$ with multiplicity one satisfies $\pr_2(T'') = Y_k$.
We shall show $\pr_1(T'') = X_k$.

Let $A = \pr_1(T'') \subset X_k$.
It follows from \cite{Abhyankar:valuations}*{Proposition 3} that $T''$ is birationally equivalent to $A \times \bP^n$ for some $n \geq 0$.
(This follows from properties of the morphism $\cO_{\cX,\alpha} \to \cO_{\cT, \tau''}$ of local rings,
where $\alpha$ and $\tau''$ are respectively the generic points of $A$ and $T''$. 
We need both local rings to be regular, hence we assumed $\cX$ smooth and $\cY$ normal.)
Since $T''$ is birationally equivalent to $Y_k$ and $Y_k$ is not ruled, we have $n = 0$, hence $\pr_1(T'') = X_k$.
\end{proof}

\begin{proof}[Proof of Theorem \ref{thm:MMmodified}]
Take the decomposition $T_k = T'' + T^*$ of Theorem \ref{thm:MM1}.
Matsusaka--Mumford shows (under assumption that $\cX$ and $\cY$ are smooth) 
that the birational map induced by $T''$ is in fact an isomorphism.
Koll\'ar pointed out \cite{Kollar:toward}*{proof of Proposition 3.1.2},
that their argument works for normal (not necessarily smooth) varieties.
\end{proof}

\section{Application: potential good reduction of K3 surfaces with complex multiplications} \label{sec:CMK3}

As mentioned in \cite{Rizov:CMK3}*{Section 3.10 (A)}, 
the N\'eron--Ogg--Shafarevich type (potential) good reduction criterion 
implies the potential good reduction of K3 surfaces with complex multiplications (CM).

We recall the Hodge group and the Hodge endomorphism algebra of a complex K3 surface (see \cite{Zarhin:HodgegroupsK3} for details).
Let $X$ be a complex K3 surface and $V = \NS(X)^\perp \subset H^2(X, \bQ)$ the transcendental lattice.
The Hodge group $\Hdg$ of $X$ is the minimal algebraic subgroup of $\GL(V)$ defined over $\bQ$
such that $h(U^1) \subset \Hdg_\bR$, 
where $h \colon \bS = \Res_{\bC/\bR} \bG_m \to \GL(H^2(X, \bZ) \otimes \bR)$ is the morphism attached to the Hodge structure
and $U^1 \subset \bS(\bR) = \bC^*$ is the set of complex numbers of absolute value $1$.
It is known that the Hodge endomorphism algebra $E = \End_{\Hdg}(V)$ is either a totally-real field or a CM field,
and that $\Hdg = \SO(V, \psi)$ or $\Hdg = \U(V, \psi)$ respectively,
where $\psi : V \times V \to E$ is defined by
\[
 \langle ex, y \rangle = \trace_{E / \bQ}( e \psi(x,y)),
\]
where $\langle {}, {} \rangle$ is the pairing on the middle degree cohomology.

\begin{defn} \label{def:CMK3}
A complex K3 surface $X$ is said to have complex multiplication (CM)
if $\Hdg$ is commutative.
Equivalently, $X$ has complex multiplication if $E$ is a CM field and $\rank_E V = 1$.
\end{defn}

\begin{rem}
The definition of CM given by Rizov \cite{Rizov:CMK3}*{Definition 1.7}
is a weaker one that $E$ is a CM field (without any condition on $\rank_E V$).
He states \cite{Rizov:CMK3}*{Corollary 3.19} 
that if $X$ has complex multiplication (in this weaker sense) with Hodge endomorphism algebra $E$ 
then $X$ is defined over an abelian extension of $E$,
but this is correct only in our stronger sense (i.e. $\rank_E V = 1$).
\footnote{This correction is added after publication. 
I thank Lenny Taelman for informing this point to me.}
\end{rem}

\begin{thm} \label{thm:CMpotgoodred}
Let $X$ be a CM K3 surface (in the sense of Definition \ref{def:CMK3}), which is defined over some number field, say $K$.
Let $\fp$ be any prime of $K$ such that its residue characteristic $p$ satisfies $p > L^2 + 4$
for some ample line bundle $L$ on $X$.
Then $X$ has potential good reduction at $\fp$ (with an algebraic space model).
\end{thm}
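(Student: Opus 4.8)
The plan is to deduce the statement directly from the main criterion, Theorem~\ref{thm:maintheorem}, by using the CM hypothesis to show that the $l$-adic cohomology is potentially unramified at $\fp$. Since the assertion is local, I first replace $K$ by its completion $K_\fp$, a complete discrete valuation field of characteristic $0$ with perfect (finite) residue field of characteristic $p$. The ample bundle $L$ pulls back to an ample bundle on $X_{K_\fp}$, and on every finite extension of $K_\fp$, with the same self-intersection $L^2$; as $p$ is unchanged, the hypothesis $p > L^2 + 4$ is preserved under any base change. Hence it suffices to produce a finite extension $K'$ of $K_\fp$ over which $H^2_\et(X_{\overline{K'}}, \bQ_l)$ is unramified for some $l \neq p$: Theorem~\ref{thm:maintheorem}(a) applied over $K'$ then gives potential good reduction of $X_{K'}$ (with an algebraic space model), hence of $X$ at $\fp$. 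Equivalently, I must show that for some $l \neq p$ the inertia group $I_\fp$ acts on $H^2_\et(X_{\overline K}, \bQ_l)$ through a finite quotient.

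For this I would use the Kuga--Satake construction of Section~\ref{sec:moduli}. Taking the polarization defined by $L$, let $A$ be the associated Kuga--Satake abelian variety, defined over a finite extension of $K$, for which $PH^2_\et(X_{\overline K}, \bQ_l)(1)$ is a Galois submodule of $H^1_\et(A)$ (via the embedding of the quadratic space into its Clifford algebra). Because $X$ has complex multiplication, its Hodge group $\Hdg$ is a torus; lifting $\Hdg$ along $\GSpin \to \SO$ yields again a torus, which is (up to the central $\bG_m$) the Mumford--Tate group of $A$, so that $A$ is an abelian variety of CM type. By the classical theory of complex multiplication (Serre--Tate), a CM abelian variety over a number field has potential good reduction at every finite place; equivalently, by the N\'eron--Ogg--Shafarevich criterion, inertia at any place above $\fp$ acts on $H^1_\et(A, \bQ_l)$ ($l \neq p$) through a finite quotient. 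The same then holds for the submodule $PH^2_\et(X_{\overline K}, \bQ_l)(1)$, hence (untwisting by the cyclotomic character, which is unramified at $\fp$ for $l \neq p$) for $PH^2_\et(X_{\overline K}, \bQ_l)$. Since $H^2 = PH^2 \oplus \langle c_1(L)\rangle$, and $I_\fp$ acts on the remaining line through the cyclotomic character, $I_\fp$ acts through a finite quotient on all of $H^2_\et(X_{\overline K}, \bQ_l)$. This is exactly the potential unramifiedness needed above.

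The main obstacle, and essentially the only step beyond invoking Theorem~\ref{thm:maintheorem} and the classical CM theory for abelian varieties, is the transfer of the CM property from the K3 surface to its Kuga--Satake abelian variety, i.e.\ the identification of the Mumford--Tate group of $A$ with the torus obtained by lifting $\Hdg$ through $\GSpin \to \SO$. This is the Hodge-theoretic computation underlying Rizov~\cite{Rizov:CMK3}*{Section 3.10 (A)}, where CM of $X$ is matched with CM of $A$. Granting it, the chain ``$\Hdg$ a torus $\Rightarrow$ $A$ of CM type $\Rightarrow$ $A$ has potential good reduction $\Rightarrow$ $H^1_\et(A)$ potentially unramified $\Rightarrow$ $H^2_\et(X)$ potentially unramified'' verifies the hypothesis of Theorem~\ref{thm:maintheorem}, and the result follows.
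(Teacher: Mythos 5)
Your proof is correct, but it takes a genuinely different route from the paper's. You establish potential unramifiedness of $H^2_\et(X_{\overline{K_\fp}},\bQ_l)$ by passing to the Kuga--Satake abelian variety $A$: the CM hypothesis on $X$ forces the Mumford--Tate group of $A$ to be a torus (the lift of $\Hdg$ along $\GSpin\to\SO$), so $A$ is of CM type, has potential good reduction everywhere by Serre--Tate's Theorem 6, and the inclusion of $PH^2(X)(1)$ into (the endomorphisms of) $H^1(A)$ then transfers potential unramifiedness back to $X$ --- the same transfer mechanism the paper itself uses in the proof of Proposition \ref{prop:extension}. The paper instead argues directly on the Galois image: after enlarging $K$ to contain $E$, the image of $G_{K_\fp}$ in $\GL(V\otimes\bQ_l)$ lands in the abelian group $\U(V_{\bQ_l},\psi)(\bQ_l)\cong\Ker(\norm\colon(E\otimes\bQ_l)^*\to(E_0\otimes\bQ_l)^*)$, which has a pro-$l$ subgroup of finite index; by local class field theory the abelianized inertia is $\cO_{K_\fp}^*$, pro-$p$ up to finite index, so for $l\neq p$ the inertia image is finite. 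The two arguments are close cousins --- Serre--Tate's Theorem 6 is itself proved by essentially the local-class-field-theory computation the paper performs directly --- so your version outsources that computation to the classical abelian CM theory at the price of needing the (true, but nontrivial) Hodge-theoretic fact that CM of $X$ implies CM of its Kuga--Satake variety, which is the one step you would have to make precise (Rizov's Section 3.10 (A), or the identification of the Mumford--Tate group of $A$ as the preimage of $\Hdg$ in $\GSpin$). The paper's route is more self-contained and avoids the Kuga--Satake machinery entirely in this proof; yours has the conceptual advantage of reducing the K3 statement cleanly to the known abelian-variety statement. Both then conclude identically by Theorem \ref{thm:maintheorem}(a).
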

\begin{rem}
As in our main theorem,
the assumption $p > L^2 + 4$ can be weakened to $p \geq 5$ if we admit the semistable reduction conjecture.
\end{rem}
This can be viewed as an analogue of the theorem that
abelian varieties with complex multiplication have good reduction \cite{Serre--Tate}*{Theorem 6}, 
although our result is restricted to large $p$.

This is also a generalization (for large $p$) of a previous result of ours on exceptional K3 surfaces.
Exceptional K3 surfaces are the K3 surfaces in characteristic $0$ of Picard number $20$ (which is the maximum possible value in characteristic $0$)
and they are examples of CM K3 surfaces.
We showed (\cite{Matsumoto:SIP}*{Corollary 0.5}) that exceptional K3 surfaces have potential good reduction for $p \neq 2,3$.

\begin{proof}[Proof of Theorem \ref{thm:CMpotgoodred}]
We may assume that $K$ contains $E$.
Take a prime $l \neq p$.
Since the Galois action preserves $\psi$,
the image of $G_K$ in $\GL(V \otimes \bQ_l)$ is contained in $\U(V_{\bQ_l}, \psi)(\bQ_l)$.
Since $\rank_E V = 1$, we have 
$\U(V_{\bQ_l}, \psi)(\bQ_l) \cong \Ker (\norm \colon (E \otimes \bQ_l)^* \to (E_0 \otimes \bQ_l)^*) \subset (E \otimes \bQ_l)^*$.
This group is abelian and contains a pro-$l$ group of finite index.

Since $\Hdg_{\bQ_l}$ is abelian, 
the action of $G_{K_\fp}$ factors through $G_{K_\fp}^\abel$.
Under the reciprocity map $K_\fp^* \to G_{K_\fp}^\abel$ of the local class field theory,
the image of the inertia group $I_{K_\fp}$ in $G_{K_\fp}^\abel$ is isomorphic to $\cO_{K_\fp}^*$,
a group which contains a pro-$p$-group of finite index.

Since $l \neq p$, it follows that the image of $I_{K_\fp}$ in $\U(V_{\bQ_l}, \psi)(\bQ_l)$ is finite.
Therefore, over a finite extension of $K$, $H^2_\et(X_{\overline {K_\fp}}, \bQ_l)$ is unramified,
and by Theorem \ref{thm:maintheorem} we conclude that $X$ has potential good reduction at $\fp$.
\end{proof}

\begin{bibdiv}
\begin{biblist}
\bibselect{myrefs}
\end{biblist}
\end{bibdiv}

\end{document}